\def\localCoh{3.2}
\def\cortolocalCohone{3.4}
\def\cortolocalCoh{3.5}
\def\SJDcortwo{5.5}
\def\notlinear{4.4.5} 
\def\unmpart{8.3} 
\def\lto{\longrightarrow}
\def\grade{\operatorname{grade}}
\def\Sym{\operatorname{Sym}}
\def\HH{\operatorname{H}}
\def\depth{\operatorname{depth}}
\def\indeg{\operatorname{indeg}}
\def\maxgendeg{b_0}
\def\L{\mathcal L}
\def\J{\mathcal J}
\def\A{\mathcal A}
\def\p{\mathfrak p}
\def\a{\mathfrak a}
\def\P{\mathfrak P}
\def\m{\mathfrak m}
\def\M{\mathfrak M}
\def\R{\mathcal R}
\def\D{\mathcal D}
\def\rank{\operatorname{rank}}
\def\htt{\operatorname{ht}}
\def\Spec{\operatorname{Spec}}
\def\ann{\operatorname{ann}}
\def\reg{\operatorname{reg}}
\def\topdeg{\operatorname{topdeg}}
\newtheorem{theorem}{Theorem}[section]
\newtheorem{lemma}[theorem]{Lemma}
\newtheorem{corollary}[theorem]{Corollary}
\newtheorem{proposition}[theorem]{Proposition}
\newtheorem{observation}[theorem]{Observation}
\newtheorem*{TheoremMainT}{Theorem \ref{MainT}}
\newtheorem{def&dis}[theorem]{Definition and Discussion}
\theoremstyle{definition}
\newtheorem{definition}[theorem]{Definition}
\newtheorem{remark}[theorem]{Remark}
\newtheorem*{Remark}{Remark}
\newtheorem*{Remarks}{Remarks}
\newtheorem{data}[theorem]{Data}
\newtheorem{chunk}[theorem]{}
\newtheorem{example}[theorem]{Example}
\numberwithin{equation}{theorem}
\begin{document}

\baselineskip=16pt

\title[The equations defining blowup algebras of 
 height three Gorenstein ideals]{ 
The equations defining blowup algebras of 
 height three Gorenstein ideals}

\author[Andrew R. Kustin, Claudia Polini, and Bernd Ulrich]
{Andrew R. Kustin, Claudia Polini, and Bernd Ulrich}

\thanks{AMS 2010 {\em Mathematics Subject Classification}.
Primary 13A30, 13H15, 14A10, 13D45; Secondary 13D02,  14E05.}

\thanks{The first author was partially supported by the Simons Foundation.
The second and third authors were partially supported by the NSF}

\thanks{Keywords: Blowup algebra,  Castelnuovo-Mumford regularity,  degree of a variety,     Hilbert series, ideals of linear type, Jacobian dual, local cohomology, morphism, multiplicity, Rees ring,  residual intersections, special fiber ring}

\address{Department of Mathematics, University of South Carolina,
Columbia, SC 29208} \email{kustin@math.sc.edu}

\address{Department of Mathematics, 
University of Notre Dame,
Notre Dame, IN 46556} \email{cpolini@nd.edu}

\address{Department of Mathematics,
Purdue University,
West Lafayette, IN 47907}\email{ulrich@math.purdue.edu}

 \begin{abstract} We find the defining equations of Rees rings of linearly presented height three Gorenstein ideals. To prove our main theorem we use local cohomology techniques to bound the maximum generator degree of the torsion submodule of  
symmetric powers in order to  conclude that the defining equations of the Rees algebra and of the special fiber ring 
generate the same ideal in the symmetric algebra. We show that 
the ideal defining the special fiber ring is the unmixed part of the ideal generated by the maximal minors of a matrix of linear forms which is annihilated by a vector of indeterminates, and otherwise has maximal possible height. An important step in the proof is the calculation of the degree of the variety parametrized by the forms generating the height three Gorenstein ideal.  \end{abstract}
 
\maketitle

\section{Introduction}

This paper deals with the algebraic study of rings that arise in the
process of blowing up a variety along a subvariety. These rings are
the Rees ring and the special fiber ring
of an ideal. 
Rees rings  
are also the bi-homogeneous coordinate rings of graphs of rational maps between projective spaces, whereas the special fiber rings are the homogeneous coordinate rings of the images of such  maps.   
More precisely,  let $R$ be a standard graded polynomial ring over a field and $I$ be an ideal of $R$ minimally generated by forms $g_1, \ldots, g_n$ of the same degree. 
These forms define a rational map $\Psi$ whose image is a variety $X$. The bi-homogeneous coordinate ring of the graph of $\Psi$ 
is the Rees algebra of the ideal $I$, which is defined as
the graded subalgebra $\mathcal R(I)=R[It]$ of the polynomial ring $R[t]$. This algebra contains, as direct summands, the ideal $I$ as well as the homogeneous 
coordinate ring of the variety $X$. 

It is a fundamental problem to find the implicit defining equations of the Rees ring and thereby of the variety $X$. 
This 
problem has been studied extensively by commutative algebraists, algebraic geometers, and, most recently, by applied mathematicians in geometric modeling. A complete solution obviously requires that one knows the structure of the ideal $I$ to begin with.
There is a large body of work dealing with perfect ideals of height two, 
which are known to be ideals of maximal minors of an almost square matrix; see for instance  \cite{HSV, M, MU, HSV08, CHW, KPU2011, Bu,  CD, KPU-BSSA, L, CD2, BM, Ma}. 
Much less is known for the `next cases', determinantal ideals of arbitrary size \cite{BCV, BCV2} and Gorenstein ideals of height three \cite{M, J}, the class of ideals that satisfies the Buchsbaum-Eisenbud structure theorem \cite{BE}. In the 
present paper we treat the case where $I$ is a height three Gorenstein ideal and the entries of a homogeneous presentation matrix of
$I$ are linear forms or, more generally, generate a complete intersection ideal; see Theorem~\ref{MainT} and Remark~\ref{general setting}. Height three Gorenstein ideals have been studied extensively, and there is a vast literature dealing with the various aspects of the subject; see for instance \cite{W, BE, EI, KU-Fam, BS, I, DV, AHH, D,  GM, MP, KM, PU, IK, CV, H, BMMNZ, KN}.

Though it may be impossible to determine the implicit equations   
of Rees rings for general classes of ideals, one can still hope
to bound the degrees of these equations, which 
is in turn 
an important step towards finding them. In the present paper we address this broader issue as well.

Traditionally  one views the  Rees algebra as a natural epimorphic image of the  symmetric algebra $\operatorname{Sym}(I)$ of $I$ and one studies the kernel of this map,
$$0\lto \mathcal A\lto \Sym(I)\lto \mathcal R(I)\lto 0.$$
The kernel  $\mathcal A$ is  the $R$-torsion submodule of $\operatorname{Sym} (I)$. The defining equations of the symmetric algebra 
can be easily read from the presentation matrix  of the ideal $I$. Hence the simplest situation is when the symmetric algebra and the Rees algebra are isomorphic, in which case the ideal $I$ is said to be of {\it linear type}. 

When  the ideal $I$ is perfect of height two, or  is Gorenstein of height three or, more generally,  is in the linkage class of a complete intersection, then $I$ is of linear type if and only if  the minimal number of generators of the ideal $I_{\p}$, $\mu(I_{\p})$, is at most  $\operatorname{dim} R_{\p}$ for every prime ideal  $\p \in V(I)$; see \cite{HSV, Hu82}. If the condition $\mu(I_{\p})\le \operatorname{dim} R_{\p}$  is only required  for every non-maximal prime ideal  $\p \in V(I)$, then   we say that $I$ satisfies $G_d$, where $d$ is the dimension of the ring $R$. The condition $G_d$ can be interpreted in terms of the height of  Fitting ideals, $i<\htt  \operatorname{Fitt}_i(I)$ whenever $i < d$. The property  $G_d$ is always satisfied if $R/I$ is $0$-dimensional or if $I$ is a generic perfect ideal of height two or a generic Gorenstein ideal of height three. For ideals in the linkage class of a complete intersection the assumption 
$G_d$ means that $\mathcal A$ is annihilated by a power of the  maximal homogeneous ideal $\mathfrak m$ of $R$, in other words $\mathcal A$ is the zeroth local cohomology $\HH^0_{\mathfrak m}(\operatorname{Sym}(I))$. One often requires the condition $G_d$ when studying Rees rings of ideals that are not necessarily of linear type. 

To identify further implicit equations of the Rees ring, in addition to the equations defining the symmetric algebra, one uses the technique of   {\it Jacobian dual}; see for instance \cite{V}. Let  $x_1, \ldots, x_d$ be the variables of the polynomial ring $R$ and $T_1, \ldots, T_n$ be  new variables. Let  
$\varphi$ be  a minimal homogeneous presentation matrix of the forms  $g_1, \ldots, g_n$ generating $I$. 
There exits a matrix $B$ with $d$ rows and entries in the polynomial ring $S=R[T_1, \ldots, T_n]$ such that the equality of row vectors
\[[T_1, \ldots, T_n] \cdot \varphi= [x_1, \ldots, x_d]\cdot B\]
holds. Choose $B$ so that its entries are linear in $T_1, \ldots, T_n$. The matrix $B$ is called a {\it Jacobian dual} of $\varphi$. If $\varphi$ is a matrix of linear forms, then the entries of $B$ can be taken from the ring $T=k[T_1, \ldots, T_n]$; this $B$ is uniquely determined by $\varphi$ and is called
{\it the} Jacobian dual of $\varphi$. The choice  of generators of $I$ gives rise to homogeneous epimorphisms of graded $R$-algebras
\[\xymatrix{
& S\ar@{->>} [r]& \operatorname{Sym} (I) \ar@{->>}[r]& \mathcal R(I)\, .
}\]
We denote by $\mathcal L$ and $\mathcal J$ the ideals of $S$ defining $ \operatorname{Sym} (I)$ and $\mathcal R(I)$, respectively. 
The ideal $\mathcal L$ is generated by the entries of the row vector  $[T_1, \ldots, T_n] \cdot \varphi=[x_1, \ldots, x_d]\cdot B$. Cramer's rule shows that 
\[\mathcal L +I_d(B) \subset \mathcal J\, ,\]
and if equality holds, then $\mathcal J$ or $\R(I)$ is said to have the {\it expected form}. The expected form is 
the next best possibility 
for the defining ideal of the Rees ring  if $I$ is not of linear type. Linearly presented ideals whose Rees ring has the expected form are 
special cases of ideals 
of fiber type. An ideal $I$ is of {\it fiber type} if the defining ideal $\J$ of $\R(I)$ can be reconstructed from the defining ideal $I(X)$
of the image variety $X$ or, more precisely,
$$ \J=\L+I(X)S\, .$$ 

It has been shown in \cite{MU} that the Rees ring has the expected form if $I$ is a linearly presented height two perfect ideal satisfying $G_d$. In this situation the Rees algebra is Cohen-Macaulay. There has been a great deal of work investigating the defining ideal of Rees rings when $I$ is a height two perfect ideal that either fails to satisfy $G_d$ or  is not linearly presented. In this case $\mathcal R(I)$ usually does not have the expected form and  is  not Cohen-Macaulay; see for instance  \cite{HSV08, CHW, KPU2011, Bu,  CD, KPU-BSSA, L, CD2, BM, Ma}.

Much less is known for height three Gorenstein ideals. In this case $n$ is odd and the presentation matrix $\varphi$ can be chosen to be
alternating, according to the Buchsbaum-Eisenbud structure theorem \cite{BE}.  The defining ideal $\mathcal J$ of the Rees ring has been determined provided that  $I$  satisfies $G_d$,  $n=d+1$, and moreover $\varphi$ has linear entries \cite{M} or, more generally, $I_1(\varphi)$ is generated by  the entries of a single row of $\varphi$ \cite{J}. It turns out that $\mathcal R(I)$ does not have the expected form, but is Cohen-Macaulay under these hypotheses. On the other hand, 
this is the only case when the Rees ring is Cohen-Macaulay. In fact, the Rees ring of a height three Gorenstein ideal satisfying $G_d$  is Cohen-Macaulay if and only if  either $n \le d$ or else $n=d+1$  and $I_1(\varphi)$ is generated by  the entries of a single generalized row of $\varphi$ \cite{PU}. Observe that if $n=d+1$ then $d$ is necessarily even. The `approximate resolutions' of the symmetric powers 
of $I$ worked out in \cite{KU-Fam} show that if $n \ge d+1$  and $d$ is even,  then $\Sym (I)$ has $R$-torsion in too low a degree for $\mathcal J$ to have the expected form; see also \cite{M}. As it turns out, the alternating structure of the presentation matrix $\varphi$  is responsible for 
`unexpected' elements in $\J$. Let $\varphi$ an alternating matrix of linear forms presenting a height three Gorenstein ideal, and let $B$ be its
Jacobian dual.  As in \cite{KU-Fam} one builds an  alternating matrix from $\varphi$ and $B$,
$$\goth B=\bmatrix \varphi&-B^{\rm t}\\B&0\endbmatrix\,.$$
The submaximal Pfaffians of this matrix are easily seen to belong to $\J$. 
We regard the last of these Pfaffians 
as
a polynomial in $T[x_1, \ldots, x_d]$ and consider its content ideal $C(\varphi)$ in $T$. The main theorem of the present paper says that $C(\varphi)$, together with the expected equations, generates the ideal $\J$:

\begin{TheoremMainT} Let $I$ be a linearly presented height three Gorenstein ideal that  satisfies $G_d$. Then the defining ideal of the Rees ring of $\,I$ is 
$$\J=\L+I_d(B)S+C(\varphi)S$$
and the defining ideal of  the variety $X$  is 
$$I(X)=I_d(B)+C(\varphi),$$ 
where $\varphi$ is a minimal homogeneous alternating presentation matrix for $I$.
In particular,  $I$ is of fiber type and,  if $d$ is odd, then $C(\varphi)=0$ and $\mathcal R(I)$ has the expected form.
\end{TheoremMainT}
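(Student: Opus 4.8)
The plan is to prove the two displayed equalities, from which the ``in particular'' assertions follow at once. Write $S=R[T_1,\dots,T_n]$ with its bigrading by $x$\hyph degree and $T$\hyph degree, so that $\L$ is generated in bidegree $(1,1)$ and $\Sym(I)=S/\L$ is standard graded in the $x$\hyph variables over $T$. By the discussion in the introduction, the hypotheses $G_d$ and ``linkage class of a complete intersection'' give $\A=\J/\L=\HH^0_{\m}(\Sym(I))$, and one checks directly that the $x$\hyph degree $0$ strand is $[\Sym(I)]_{(0,\ast)}=T$, $[\R(I)]_{(0,\ast)}=\mathcal F(I)=T/I(X)$, and hence $[\A]_{(0,\ast)}=I(X)$. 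The inclusions $\L+I_d(B)S\subseteq\J$ hold by Cramer's rule applied to $[x]B=[T]\varphi$, while $C(\varphi)\subseteq\J$ follows from a content argument once the submaximal Pfaffians of $\goth B$ are placed in the prime $\J$. Thus the theorem reduces to: (a) the \emph{fiber type} equality $\J=\L+I(X)S$; and (b) the identification $I(X)=I_d(B)+C(\varphi)$. Granting these, $\J=\L+I(X)S=\L+I_d(B)S+C(\varphi)S$ and $I$ is of fiber type.

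The technical engine for both (a) and (b) is a local cohomology computation of $\HH^{0}_{\m}(\Sym_t(I))=\A_t$ for every $t$, carried out on the ``approximate resolutions'' of the symmetric powers $\Sym_t(I)$ built from $\varphi$ in \cite{KU-Fam}. First I would read off from this computation a uniform bound $\maxgendeg$ on the top $x$\hyph degree of a minimal generator of $\A$. Since $[\A]_{(0,\ast)}=I(X)$ and $\Sym(I)$ is standard graded in $x$, showing that this bound forces every minimal generator of $\A$ into $x$\hyph degree $0$ is exactly the statement $\A=I(X)\,\Sym(I)$, i.e.\ $\J=\L+I(X)S$; this gives (a). The alternating structure of $\varphi$, encoded in $\goth B$ and its submaximal Pfaffian, is what produces the generators in the ``unexpected'' degrees that the bound must still capture, and keeping track of them is the delicate point. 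The same cohomology computation yields the graded values $\dim_k[\A_t]_0=\dim_k I(X)_t$, hence the Hilbert series of $\mathcal F(I)$ and, as its leading coefficient, the degree of the image variety $X$.

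For (b) I would first record the linear syzygy that distinguishes $B$: from $[x]B=[T]\varphi$ and $[T]\varphi[T]^{\mathrm t}=0$ (as $\varphi$ is alternating) one gets $[x]\,(B[T]^{\mathrm t})=0$, and since the entries of $B[T]^{\mathrm t}$ are pure $T$\hyph forms while the $x_i$ are independent variables, $B[T]^{\mathrm t}=0$. So $B$ is a $d\times n$ matrix of linear forms annihilated by the vector of indeterminates $[T_1,\dots,T_n]^{\mathrm t}$, which forces $\htt I_d(B)\le n-d$; assuming $B$ is otherwise as generic as $G_d$ allows, equality holds and the top\hyph dimensional part of $I_d(B)+C(\varphi)$ is prime. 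As $\dim\mathcal F(I)=\ell(I)=d$, the ideal $I(X)$ is prime of height $n-d$ and contains $I_d(B)+C(\varphi)$. To force equality I would compute the multiplicity of $T/(I_d(B)+C(\varphi))$ directly from the structure of such a generic matrix and its Pfaffian content (this is where the structure theory of height three Gorenstein ideals and residual intersection techniques enter), and match it against $\deg X=e(\mathcal F(I))$ obtained in the previous step. Equality of multiplicities, together with primeness of $I(X)$ and unmixedness of $I_d(B)+C(\varphi)$, yields $I(X)=I_d(B)+C(\varphi)$.

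It remains to assemble the pieces and to treat the parity dichotomy. Combining (a) and (b) gives $\J=\L+I_d(B)S+C(\varphi)S$ and $I(X)=I_d(B)+C(\varphi)$, and fiber type is immediate. When $d$ is odd the alternating matrix $\goth B$ has even size $n+d$, so its submaximal (delete\hyph one\hyph index) Pfaffians are of odd order and vanish; hence $C(\varphi)=0$ and $\J=\L+I_d(B)S$ has the expected form. I expect the principal obstacle to be the local cohomology computation underlying both steps: extracting from the approximate resolutions the exact low\hyph degree behavior of $\HH^0_{\m}(\Sym_t(I))$ — enough to bound the generators of $\A$ for (a) and to pin down the Hilbert series of $\mathcal F(I)$, hence $\deg X$, for (b). Reconciling this $\deg X$ with the independently computed multiplicity of $T/(I_d(B)+C(\varphi))$ is where the Buchsbaum\hyph Eisenbud structure does the real work.
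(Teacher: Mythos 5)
Your overall skeleton does match the paper's: part (a) (fiber type) as you describe it is exactly the paper's argument (Theorem~\ref{App-to-BA-1} and Corollary~\ref{fiber type}, via the complexes $\mathcal D^q_{\bullet}(\varphi)$ of \cite{KU-Fam} and the generator-degree bound of Theorem~\ref{SJD-cor2}), your derivation of $B\cdot\underline{T}^{\rm t}=0$ is Lemma~\ref{RE}, and your closing step --- containment of two unmixed ideals of the same height and multiplicity, one of them prime, forces equality --- is precisely how the paper finishes. But inside part (b) there are two genuine gaps. The first is your claim that ``the same cohomology computation yields the graded values $\dim_k[\A_t]_0=\dim_k I(X)_t$, hence the Hilbert series of $T/I(X)$ and, as its leading coefficient, the degree of $X$.'' This does not work: the complexes $\mathcal D^q_{\bullet}(\varphi)$ are \emph{not} resolutions (they are acyclic only on the punctured spectrum), and the results they feed into (Proposition~\ref{localCoh}, Corollary~\ref{cor-to-localCoh-1}, Theorem~\ref{SJD-cor2}) produce only vanishing ranges and generation bounds --- subquotient statements --- never the dimensions of graded components of $\HH^0_{\m}(\Sym_q(I(\delta)))$. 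Computing $\deg X$ requires a completely separate argument, and it is one of the four main ingredients of the paper: because $I$ is linearly presented, every fiber of $\Psi$ is cut out by a linear ideal (the row-ideal result of \cite{EU}), so $\Psi$ is birational onto $X$ (Proposition~\ref{regular}); then $e(A)$ equals the multiplicity of $R/\bigl((f_1,\dots,f_{d-1}):I\bigr)$, a $(d-1)$-residual intersection of $I$ (Proposition~\ref{xie}), and that multiplicity is computed in \cite{KPU-HS} from the resolutions of \cite{KU-Fam} (Theorem~\ref{Main-C}, assembled as Theorem~\ref{MULT}).

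The second gap is the height equality $\htt I_d(B)=n-d$, which you obtain by ``assuming $B$ is otherwise as generic as $G_d$ allows.'' That is not an argument: $B$ is very far from generic (it is annihilated by $\underline{T}^{\rm t}$), and no genericity follows from $G_d$. The paper proves the height equality in Theorem~\ref{correct grade}: Lemma~\ref{Y1.6+} gives $\J=\sqrt{\L:_S\m}$ (this again uses the linearity of the approximate resolution of a high symmetric power, via Corollary~\ref{cor-to-localCoh}), and a Fitting-ideal computation, $\L:_S\m=\ann_S(\m S/\L)\subset\sqrt{(\m,I_d(B))}$, then yields $I(X)=\sqrt{I_d(B)}$, whence $\htt I_d(B)=n-\dim A=n-d$ (using $\dim A=d$ from Proposition~\ref{xie}). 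This step is not cosmetic: without the inequality $n-d\le\htt I_d(B)$, the resolutions constructed in \cite{KPU-ann} do not apply, so the unmixedness of $I_d(B)+C(\varphi)$ and its multiplicity formula (Theorem~\ref{unm-part}) --- both of which your final comparison requires --- are unavailable. Your parity argument for $C(\varphi)=0$ when $d$ is odd is fine, since $n$ is odd and hence $n+d$ is even.
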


We now outline the method of proof of our main theorem. The proof has essentially four ingredients, each of which is of independent interest and  applies in much greater generality than needed here. Some of these tools are developed in other articles \cite{KU-Fam, KPU-HS, KPU-DBLC, KPU-ann}, and we
are going to describe the content of these articles to the extent necessary for the present paper. 

\smallskip

The first step is to prove that any ideal $I$ satisfying the assumptions of Theorem \ref{MainT} is of fiber type (Corollary~\ref{fiber type}). Once this
is done, it suffices to determine $I(X)$. A crucial tool in this context is the standard bi-grading of the symmetric algebra. If $\delta$
is the degree of the generators of $I$, then the $R$-module $I(\delta)$ is generated in degree zero and its symmetric algebra $\Sym(I(\delta))$ is naturally standard bi-graded. As it turns out, the ideal $I$ is of fiber type if and only if the bi-homogeneous ideal $\A=\HH^0_{\mathfrak m}(\operatorname{Sym}(I(\delta)))$ of the symmetric algebra  is generated in degrees $(0, \star)$ or, equivalently, the $R$-modules $\HH^0_{\mathfrak m}(\operatorname{Sym}_q(I(\delta)))$ are generated in degree zero for all non-negative integers $q$.  This suggests the following general question: 
If $M$ is  a finitely generated graded $R$-module, what are bounds for the generator degrees of the local cohomology modules $\HH^i_{\m}(M)$? This question is addressed in \cite{KPU-DBLC}. There we consider {\it approximate resolutions}, which are homogeneous complexes of finitely
generated graded $R$-modules $C_j$ of  sufficiently high  depth,
$$\, C_{\bullet} :\qquad  \ldots  \ \lto C_1 \lto C_0 \lto 0\, ,$$
such that  $\HH_0(C_{\bullet}) \cong M$ and $\HH_j(C_{\bullet})$ have  sufficiently small dimension for all positive integers $j$. 
We prove, for instance, that  $\HH^0_{\m}(M)$ is concentrated in degrees $\le b_0(C_d)-d$ and is generated in degrees $\le b_0(C_{d-1}) -d+1$, where $b_0(C_j)$ denotes the largest generator degree of $C_j$.  These theorems from  \cite{KPU-DBLC}  are reproduced in the present paper as Corollary~\ref{cor-to-localCoh-1} and Theorem~\ref{SJD-cor2}.
 Whereas similar results for the concentration degree of local cohomology modules have been established before to study Castelnuovo-Mumford regularity (see, e.g., \cite{GLP}), it appears that the sharper bound on the generation degree is new and more relevant for our purpose. Under the hypotheses of 
Theorem~\ref{MainT} the complexes $\mathcal D^q_{\bullet}$ from \cite{KU-Fam} are approximate resolutions of the symmetric powers $\Sym_q(I(\delta))$, and we deduce that indeed the $R$-modules $\HH^0_{\m}({\rm Sym}_q(I(\delta)))$ are generated in degree zero. This is done in Theorem~\ref{App-to-BA-1} and Corollary~\ref{fiber type}. 

In fact, Theorem~\ref{App-to-BA-1} is  considerably more general; it deals with height two perfect ideals and height three Gorenstein ideals 
that are not necessarily linearly presented. 
We record an explicit exponent $N$ so that $\mathfrak m^N\cdot \mathcal A=0$ and we provide an explicit bound  for the largest $x$-degree $p$ so that $\mathcal A_{(p,*)}$ contains a minimal bi-homogeneous generator of $\mathcal A$.    
Theorem~\ref{App-to-BA-1}  is a  
significant generalization 
of part of  \cite{KPU-BSSA},
where the same results are established under fairly restrictive hypotheses. 

\smallskip

With the hypotheses of Theorem \ref{MainT} we know that $I_d(B)+C(\varphi)  \subset I(X)$; see \cite{KPU-ann} or
Corollary~\ref{content} in the present paper. To show that this inclusion is an equality, we first prove that the two ideals have the same height, which is $n-d$ if $d \le n$. In fact, we show much more under weaker hypotheses: Assume $I$ is any ideal of $R$ generated by forms of  degree $\delta$, $I$ is of linear type on
the punctured spectrum, and a sufficiently high symmetric power $\Sym_q(I(\delta))$ has an approximate free resolution that is linear for the first $d$ steps, then up to radical, $\J$ has the expected form, in symbols, $\J=\sqrt{(\L, I_d(B))}$ and, as a consequence,  $I(X)=\sqrt{I_d(B')}$, where $B'$ is the Jacobian dual of the largest submatrix of $\varphi$ that has linear entries. This is Theorem~\ref{correct grade} in the present paper. We point out that even though our primary interest was the defining ideal $I(X)$ of the variety $X$, we had to consider the entire bi-graded symmetric algebra of $I$ since our main assumption uses
the first component of the bi-grading, which is invisible to the homogeneous coordinate ring of $X$.

\smallskip

The third step in the proof of Theorem \ref{MainT} is to show that  the ideal $I_d(B)+C(\varphi)$ is unmixed, in the relevant case when $d<n$. In \cite{KPU-ann} we construct, more generally, complexes associated to any $d$ by $n$ matrix
$B$ with linear entries in $T$ that is annihilated by a vector of indeterminates, $B\cdot {\underline T}^t=0$. The ideal of $d$ by $d$ minors of such a matrix cannot
have generic height $n-d+1$. However,
if the height is $n-d$, our complexes give resolutions of $I_d(B)$ and of $I_d(B)+C(\varphi)$, although these ideals fail to be perfect in general. 
Thus, we obtain resolutions in the context of Theorem \ref{MainT} because we have seen in the previous step that $I_d(B)$ has height $n-d$.
Using these resolutions we 
prove that the ideal $I_d(B)+C(\varphi)$ is unmixed and hence is the unmixed part of $I_d(B)$. 
In \cite{KPU-ann} we also compute 
the multiplicity of the rings defined by these ideals. 

\smallskip

We have seen that the two ideals $I_d(B)+C(\varphi)$ and $I(X)$ are unmixed and have the same height.
Thus, to conclude that the inclusion $I_d(B)+C(\varphi)\subset I(X)$ is an equality it suffices to prove that the rings defined by these ideals have the same multiplicity. Since the multiplicity of the first ring has been computed in \cite{KPU-ann}, it remains to determine  the multiplicity of the
coordinate ring $A=T/I(X)$ or, equivalently,  the degree of the variety $X$. To do so we observe that the rational map $\Psi$ is birational onto its image because the presentation matrix $\varphi$ is linear (Proposition~\ref{regular}). Therefore the degree of $X$ can be expressed in terms of the multiplicity of a ring defined by a certain residual intersection of $I$ (Proposition~\ref{xie}). Finally, the multiplicity of such residual intersections can be obtained from the resolutions in \cite{KU-Fam}; see \cite{KPU-HS} and Theorem~\ref{MULT} in the present paper. Once  we  prove in Theorem~\ref{MainT} that  $I(X)=I_d(B)+C(\varphi)$, then in Corollary~\ref{Cor-to-main} 
we are able to harvest much information from the results of \cite{KPU-ann}. 
In particular, we learn the depth of the homogeneous coordinate ring $A$ of $X$, the entire Hilbert series of $A$, and the fact that $I(X)$ has a linear resolution when $d$ is odd.


\section{Conventions and notation}\label{prelim}

\begin{chunk} If $\psi$ is a matrix, then $\psi^{\rm t}$ is the transpose of $\psi$. If $\psi$ is a matrix (or a homomorphism of finitely generated free $R$-modules), then $I_r(\psi)$ is the ideal generated by the
$r\times r$ minors of $\psi$ (or any matrix representation of $\psi$).\end{chunk}

\begin{chunk}\label{numerical-functions}We collect  names  for some of the invariants associated to a graded module. Let $R$ be a graded Noetherian ring and $M$ be a graded $R$-module. Define
\begin{align}
\topdeg M&=\sup\{j \mid M_j\not=0\},
\notag\\
\indeg M&=\inf\{j \mid M_j\not=0\}, \text{ and}
\notag\\
b_0(M)&\textstyle=\inf \left\{p \, | \,  R\left(\bigoplus_{j\le p}M_j\right)=M \right\}.\notag\\
\intertext{
If $R$ is non-negatively graded, $R_0$ is local,  $\mathfrak m$ is the maximal homogeneous ideal of $R$, $M$ is finitely generated, and $i$ is a non-negative integer, then also define}
a_i(M)&=\topdeg \HH_{\mathfrak m}^i(M) \text{ \ and}
\notag\\
b_i(M)&=\topdeg \operatorname{Tor}^R_i(M,R/\mathfrak m).
\notag\end{align}
Observe that both definitions of the maximal generator degree $b_0(M)$ give the same value.  The expressions ``$\topdeg$'', ``$\indeg$'', and ``$b_i$'' are read ``top degree'', ``initial degree'', and 
``maximal $i$-th shift in a minimal homogeneous resolution'',
respectively.
If $M$ is the zero module, then $$\topdeg(M)=b_0(M)=-\infty\quad\text{and}\quad \indeg M=\infty.$$ In general one has 
$$a_i(M)< \infty \quad\text{and}\quad b_i(M)< \infty\, .$$
\end{chunk}

\begin{chunk}Recall the numerical functions of \ref{numerical-functions}.  Let $R$ be a non-negatively graded Noetherian ring with $R_0$ local and  $\mathfrak m$ be the maximal homogeneous ideal of $R$. 
The $a$-invariant of $R$ is defined to be
%
%
$$a(R)=a_{\dim R}(R).$$
Furthermore, if $\omega_R$ is the graded canonical module of $R$
, then 
$$a(R)=-\indeg \omega_R
.$$
\end{chunk}

\begin{chunk}
\label{aiM} Let  $R$ be a standard graded polynomial ring over a field $k$, $\mathfrak m$ be the maximal homogeneous ideal of $R$, and   
$M$ be a finitely generated graded $R$-module.  Recall the numerical functions of \ref{numerical-functions}. 
The Castelnuovo-Mumford {\it regularity} of  $M$ is
$$\reg M=\max\{a_i(M)+i\}=\max\{b_i(M)-i\}.$$
\end{chunk}

\begin{chunk} Let $I$ be an ideal in a Noetherian ring. The {\it unmixed part} $I^{\text{unm}}$  of $I$ is the intersection of the primary components 
 of $I$ that have height equal to the height of $I$. \end{chunk}

\begin{chunk} Let $R$ be a Noetherian ring,  $I$ be a proper ideal of $R$, and $M$ be a finitely generated non-zero $R$-module. 
We denote the projective dimension of the $R$-module $M$ by  $\operatorname{pd}_R(M)$.  The {\it grade} of $I$ is the length of a maximal regular sequence on $R$ which is 
contained in $I$. (If $R$ is Cohen-Macaulay, then the grade
of $I$ is equal to the height of $I$.) The $R$-module $M$ is called {\it perfect} if the grade of the annihilator of $M$ (denoted $\operatorname{ann} M$) is equal to the projective dimension of $M$. (The inequality $\grade\operatorname{ann} M\le \operatorname{pd}_R M$ holds automatically.) The ideal $I$  in  $R$ is called a {\it perfect ideal} if $R/I$ is a perfect $R$-module.
 A perfect ideal $I$ of grade $g$ is a {\it Gorenstein ideal}
 if $\operatorname{Ext}^g_R(R/I,R)$ 
is a cyclic $R$-module.
\end{chunk}

 \begin{chunk}\label{Gd-def}
Denote by $\mu(M)$ the minimal number of generators of a finitely generated module $M$ over a local ring $R$. Recall from Artin and Nagata \cite{AN} that an ideal $I$ in a Noetherian ring $R$ satisfies the condition  $G_s$ if $\mu(I_{\mathfrak p}) \le \operatorname{dim}  R_{\mathfrak p}$ for each prime ideal $\mathfrak p \in V(I)$  with $\operatorname{dim} R_{\mathfrak p} \le s-1$.
The condition $G_s$ can be interpreted in terms of the height of  Fitting ideals. 
An ideal $I$ of positive height satisfies $G_s$ if and only if  $i<\htt \, \operatorname{Fitt}_i(I)$ for $0 < i < s$. 
 
 The property  $G_d$, for $d={\rm dim}\, R$, is always satisfied if $R/I$ is $0$-dimensional or if $I$ is a generic perfect ideal of grade two or a generic Gorenstein ideal of grade three.

\end{chunk}

\begin{chunk} Let $R$ be a Noetherian ring,  $I$ be an ideal of height $g$,
$K$ be a proper ideal, and  $s$   be an integer with $g\le s$.
\begin{enumerate}[\rm (1)]
\item[(a)]
The ideal $K$ is called an {\it $s$-residual intersection} of $I$ if there exists an $s$-generated ideal $\a \subset I$
such that $K=\a : I$ and $s\le \operatorname{ht} K$.
\item[(b)]
The ideal $K$ is called a {\it geometric $s$-residual intersection} of $I$ if $K$ is an $s$-residual intersection
of $I$ and if in addition $s+1\le \htt (I+K)$.
\item[(c)]
The ideal $I$ is said to be {\it weakly $s$-residually $S_2$} if for every $i$ with $g\le i \le s$ and every geometric $i$-residual intersection $K$ of $I$, $R/K$ is $S_2$.
\end{enumerate}
\end{chunk}

\begin{chunk}\label{REES} If $I$ is an ideal of a ring $R$, then the {\it Rees ring} of $I$, denoted $\mathcal R(I)$, is the subring $R[It]$ of the polynomial ring $R[t]$. There is a natural epimorphism of $R$-algebras from the symmetric algebra of $I$, denoted $\Sym(I)$, onto $\R(I)$. If this map is an isomorphism, we say that the ideal $I$ is of {\it linear type}.
\end{chunk}
\medskip

\section{Defining equations of graphs and images of rational maps}\label{Intro-B-A}

\begin{data}\label{data1} Let $k$ be a field and let $R=k[x_1,\dots,x_d]$, $T=k[T_1,\dots,T_n]$, and $S=R[T_1,\dots,T_n]$ be polynomial rings. Let $\mathfrak m=(x_1, \ldots, x_d)$ be the  maximal homogeneous ideal of $R$, and $I$ a homogeneous ideal of $R$ minimally generated by forms $g_1, \ldots, g_n$ of the same positive degree $\delta$. 

Such forms define a rational map \[\Psi=[g_1: \ldots: g_n]:\xymatrix{\mathbb P_k^{d-1}\ar@{-->}[r]&\mathbb P_k^{n-1}}\, \]
with base locus $V(I)$. We write $X$ for the closed image of $\Psi$, the variety parametrized by $\Psi$, and $A=T/I(X)$ for the homogeneous coordinate ring of  this variety. 
\end{data} 

\smallskip

In this section we collect some general facts about rational maps and Rees rings.
Data \ref{data1} is in effect throughout.

\smallskip

\begin{chunk}\label{grading} Recall the definition of the Rees ring $\R(I)=R[It]\subset R[t]$ from \ref{REES}. We consider the epimorphism of $R$-algebras
$$\pi: \xymatrix{S\ar@{->>}[r]&\mathcal R(I)}$$
that sends $T_i$ to $g_it$. Let $\J$ be the kernel of this map, which is the ideal defining the Rees ring. 

We set ${\deg x_i=(1,0)}$, $\deg T_i=(0,1)$, and $\deg t=(-\delta, 1)$. Thus $S$ and the subring $\R(I)$ of $R[t]$ become standard bi-graded $k$-algebras, $\pi$ is a bi-homogeneous epimorphism, and $\J$ is a bi-homogeneous ideal of $S$. If $M$ is a bi-graded $S$-module and $p$ a fixed integer, we write 
$$ M_{(p, \star)}=\bigoplus_j \, M_{(p,j)} \ \ \  {\rm and} \ \  \  M_{(> p, \star)}=\bigoplus_{ i >p} \, M_{(i,\star)} \, .$$
\end{chunk}
 
\vspace{.0cm}

\begin{chunk}\label{graph}  The Rees algebra $\R(I)$ is the bi-homogeneous coordinate ring of the graph of $\Psi$. In fact, the  natural morphisms of projective varieties
$$\xymatrix{ \mathbb P_k^{d-1} \times \mathbb P_k^{n-1}\ar@{->>}[d] &\ {\rm graph} \, \Psi \ar@{->>}[d]  \ \ar@{_{(}->}[l] \\
\  \ \mathbb P_k^{n-1} &\ X={\rm im} \, \Psi  \ \ar@{_{(}->}[l] }$$
correspond to bi-homogeneous homomorphisms of $k$-algebras
$$\xymatrix{
S\ar@{->>}[r]^{\pi} & \mathcal R(I)\\
T\ar@{->>}_{\pi_{|T}}[r]   \ar@{^{(}->}[u] &\ A   \ar@{^{(}->}[u]\, .
}$$
Since $T=S_{(0, \star)}$, we see that 
$$A=\R(I)_{(0, \star)}=k[g_1t, \ldots, g_nt]$$
and 
\begin{equation}\label{I of X}I(X)=\J_{(0,\star)}=\J \cap T\, .
\end{equation}

We also observe that there is a not necessarily homogeneous isomorphism of $k$-algebras 
$$k[g_1t, \ldots, g_nt] \cong k[g_1, \ldots, g_n]\subset R\, .$$
Moreover, the identification $\R(I)_{(0,\star)}\cong \R(I)/\R(I)_{(>0,\star)}$ gives a homogeneous isomorphism
$$A\cong\R(I)/\m\R(I)\,.$$
The latter ring is called the {\it special fiber ring} of $I$ and its dimension is the {\it analytic spread} of $I$, written $\ell(I)$. The analytic spread plays an important role in the study of reductions of ideals and satisfies the inequality 
\begin{equation}\label{ell}\ell(I)\le \min\{d, n\}\, .\end{equation}
Notice that $\ell(I)=\dim A=\dim X+1$. 
\end{chunk}

\smallskip

\begin{chunk}\label{LT-FT} The graded $R$-module $I(\delta)$ is generated in degree zero and hence its symmetric algebra $\Sym(I(\delta))$ is a standard bi-graded $k$-algebra. There is a natural bi-homogeneous epimorphism of $k$-algebras 
$$ \xymatrix{\Sym(I(\delta))\ar@{->>}[r]&\mathcal R(I)\, ,} $$
whose kernel  we denote by $\mathcal A$. The epimorphism $\pi$ from \ref{grading} factors through this map and gives a bi-homogeneous epimorphism of $k$-algebras
\[\xymatrix{
& S\ar@{->>} [r]& \operatorname{Sym} (I(\delta)) \,.
}\]
We write $\L$ for the kernel of this epimorphism. The advantage of $\L$ over $\J$ is that $\L$ can be easily described in terms of a presentation matrix of $I$; see \ref{def1}. 
Notice that 
$$\A=\J/\L\, .$$

\noindent
The ideal $I$ is  of  linear type if any of the following equivalent conditions holds:
\begin{itemize}
\item $\A=0$
\item $\J=\L$
\item $\J$ is generated in bi-degrees $(\star, 1)$. \end{itemize}

\noindent
Notice that if $I$ is of linear type then necessarily $I(X)=0$. 

\smallskip
\noindent
The ideal $I$ is said to be of {\it fiber type} if any of the following equivalent conditions holds:

\begin{itemize}
\item $\A=I(X)\cdot \Sym(I(\delta))$
\item $\A$ is generated in bi-degrees $(0,\star)$
\item $\J=\L+I(X)\cdot S$
\item $\J$ is generated in bi-degrees $(\star, 1)$ and $(0,\star)$;
\end{itemize}
see (\ref{I of X}) for the equivalence of these conditions.
\end{chunk}
 
 \vspace{0.0cm}
 
 \begin{chunk}

The ideal $\mathcal A$ in the exact sequence
$$0\lto \mathcal A\lto \Sym(I(\delta))\lto \mathcal R(I)\lto 0$$
is  the $R$-torsion  of $\Sym (I(\delta))$. Hence $\A$ contains the zeroth local cohomology of $\Sym(I(\delta))$ as an $R$-module, which in turn contains the socle of $\Sym(I(\delta))$ as an $R$-module,
\begin{equation}\label{sym}\A \supset \HH^0_{\m}({\Sym}(I(\delta)))=0:_{\Sym(I(\delta))} \m^{\infty}\supset 0:_{\Sym(I(\delta))} \m\, .\end{equation}
\end{chunk}
 
 \vspace{0.0cm}
 
\begin{chunk}\label{def1}Consider the row vectors  $\underline{x}=[x_1, \ldots, x_d]$, $\underline{T}=[T_1, \ldots, T_n]$, $\underline{g}=[g_1, \ldots, g_n]$, and let $\varphi$ be  a minimal homogeneous presentation matrix of $\underline{g}$. 
There exits a matrix $B$ with $d$ rows and entries in the polynomial ring $S$ such that the equality of row vectors
\begin{equation}\label{3.2.1}\underline{T} \cdot \varphi= \underline{x}\cdot B\end{equation}
holds. If $B$ is chosen so that its entries are linear in $T_1, \ldots, T_n$, then $B$ is called a {\it Jacobian dual} of $\varphi$. 
We say that a Jacobian dual is  {\it homogeneous} if the entries of any fixed column of $B$ are homogeneous of the same bi-degree.   Whenever $\varphi$ is a matrix of linear forms, then the entries of $B$ can be taken from the ring $k[T_1, \ldots, T_n]$; this $B$ is uniquely determined by $\varphi$ and is called
{\it the} Jacobian dual of $\varphi$; notice that the entries of $B$ are linear forms and that $B$ is indeed a Jacobian matrix of the $T$-algebra $\Sym(I(\delta))$.

The ideal $\mathcal L$ defining the symmetric algebra is equal to $I_1(\underline{T} \cdot \varphi)$, which coincides with $I_1(\underline{x}\cdot B)$. 
For the Rees algebra, the inclusions 
\begin{equation}\label{6.2.1}\L +I_d(B) \subset \L:_S \m\subset  \mathcal J\end{equation}
obtain; see also \cite{V}. If the equality $\L+I_d(B)=\J$ holds, then $\mathcal J$ or $\R(I)$  is said to have the {\it expected form}. Notice that in this case the inclusions of (\ref{sym}) are equalities, in other words, $\A$ is the socle of $\Sym(I(\delta))$ as an $R$-module. 
\end{chunk}
\begin{chunk}\label{Intro-to-BA}  It is much easier to determine when the first inclusion of (\ref{sym}) is an equality. Namely, 
\begin{equation}\label{A=H}\A=\HH^0_{\m}({\Sym}(I(\delta))) \ \Longleftrightarrow \  I  \mbox{ is of linear type 
on the punctured spectrum of } R\, . \end{equation}
The second condition means that $\mathcal A_\mathfrak p=0$ for all $\mathfrak p$ in $\operatorname{Spec}(R)\setminus \{\m\}.$

This leads to the question of when an ideal is of linear type. Thus let $J$ be an ideal of a local Cohen-Macaulay ring of dimension $d$. If $J$ is perfect of height two or Gorenstein of height three, then $J$ is in the linkage class of a complete intersection \cite{G, W}. If $J$ is in the linkage class of a complete intersection, then $J$ is strongly Cohen-Macaulay \cite[1.11]{Hu82} and hence  satisfies the sliding depth condition of \cite{HVV}.
If $J$ satisfies the sliding depth condition, then $J$ is weakly $(d-1)$-residually $S_2$ in the sense of 2.8.c \cite[3.3]{HVV}.
In the presence  of the weak $(d-2)$-residually $S_2$ condition it follows from \cite[3.6(b)]{CEU} that
$$J \mbox{ is of linear type  } \ \Longleftrightarrow \   J \mbox{ satisfies  } G_{d+1}\, ;$$
in particular, 
$$J \mbox{ is of linear type on the punctured spectrum } \ \Longleftrightarrow \   J \mbox{ satisfies  } G_{d}\, .$$
(Alternatively, one uses \cite{HSV}.)

Combining these facts we see that if the ideal $I$ of Data \ref{data1} is perfect  of height two or Gorenstein of height three then
\begin{equation}\label{Aug232016} I \mbox{  is of linear type }  \ \Longleftrightarrow \  I  \mbox{ satisfies } G_{d} \mbox{ and } n\le d \, 
\end{equation}
and 
\begin{equation}\label{apr30'} \A=\HH^0_{\m}({\Sym}(I(\delta))) \ \Longleftrightarrow \  I  \mbox{ satisfies } G_{d}\, .
\end{equation}
 \end{chunk}
\bigskip

\section{The degree of the image of a rational map}
\medskip

This section deals with the dimension and the degree of the image of rational maps as in Data {\rm\ref{data1}}. 
Applied to the special case of linearly presented Gorenstein ideals of height three, this information will be an important
ingredient in the proof of Theorem~\ref{MainT}.

We first treat the question of when the rational map $\Psi$ is birational onto its image $X$ (Proposition~\ref{regular}). We then express the degree of $X$ in terms of the multiplicity of a ring defined by a residual intersection  of the ideal $I$ defining the base locus of $\Psi$ (Proposition~\ref{xie}). The multiplicity of rings defined by residual intersections of linearly presented height three Gorenstein ideals was computed in \cite{KPU-HS}, using the resolutions worked out in \cite{KU-Fam}. For completeness we state this result in Theorem~\ref{Main-C}. Finally, we combine Proposition~\ref{regular}, Proposition~\ref{xie}, and Theorem~\ref{Main-C} in Corollary~\ref{MULT} to compute the degree of $X$ under the hypotheses of Theorem~\ref{MainT}. 

\begin{proposition}\label{regular} Adopt Data {\rm\ref{data1}}.  Assume further that the ideal $I$ is linearly presented.
\begin{enumerate}[\rm(a)]
\item\label{3.5.a} If 
the ring $R/I$  has dimension zero, then the map $\Psi$ is biregular onto its image. In particular, 
$A$ has multiplicity $\delta^{d-1}$. 
\item\label{3.5.b} If
$A$ has dimension $d$, then the rational map $\Psi$ is birational onto its image.
 \end{enumerate}\end{proposition}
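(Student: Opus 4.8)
The plan is to reduce both statements to a single assertion about the Jacobian dual: that the reduction $\bar B$ of $B$ modulo $I(X)$ has rank exactly $d-1$ over $k(X)=\operatorname{Frac}(A)$. First I would record the easy inequality. Pushing the identity $\underline{x}\cdot B=\underline{T}\cdot\varphi$ of \ref{def1} through $\pi$ and using $\underline{g}\,\varphi=0$ gives $\underline{x}\cdot\bar B=0$ in $\R(I)$, so $I_d(B)\subseteq\J\cap T=I(X)$ and hence $\rank_{k(X)}\bar B\le d-1$. Conversely, once $\rank_{k(X)}\bar B=d-1$ is available, birationality is automatic: the left kernel of $\bar B$ over $k(X)$ is then one-dimensional and is spanned, by Cramer's rule, by a vector of maximal minors of $\bar B$ whose entries lie in $A$; extending scalars to $\operatorname{Frac}(\R(I))=k(x_1,\dots,x_d,t)$ the kernel stays one-dimensional and must contain $\underline{x}$, so every ratio $x_i/x_j$ equals a ratio of such minors and therefore lies in $k(X)$. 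Since $k(X)$ is generated over $k$ by the $g_i/g_j$, this yields $k(X)=k(\mathbb P^{d-1})$, i.e. $\Psi$ is birational onto $X$. Thus everything comes down to the reverse rank inequality.

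For part (\ref{3.5.b}) it therefore remains to prove $\rank_{k(X)}\bar B\ge d-1$ under the hypothesis $\dim A=d$. Writing $\varphi=\sum_k x_k\varphi_k$ with constant matrices $\varphi_k$ and differentiating $\underline{g}\,\varphi=0$ in $x_j$ gives $\underline{g}\,\varphi_j=-(\partial_j\underline{g})\,\varphi$; letting $H$ be the matrix whose $j$-th row is $\underline{g}\,\varphi_j$ and $\Theta=(\partial g_i/\partial x_j)$ the Jacobian of $\underline{g}$, this reads $H=-\Theta^{\mathrm t}\varphi$. One has $\bar B=t\,H$ inside $\R(I)$, so $\rank_{k(X)}\bar B=\rank_{\operatorname{Frac}(R)}H$. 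Euler's identity $\underline{x}\,\Theta^{\mathrm t}=\delta\,\underline{g}$ forces $\ker\Theta^{\mathrm t}$ to lie in the syzygy space of $\underline{g}$, i.e. the column space of $\varphi$, of dimension $n-1$ (if $\Theta^{\mathrm t}\underline{w}=0$ then $\delta\,\underline{g}\cdot\underline{w}=\underline{x}\,\Theta^{\mathrm t}\underline{w}=0$); and $\dim A=d$ gives $\rank\Theta=\operatorname{trdeg}_k k(\underline{g})=d$, so $\dim\ker\Theta^{\mathrm t}=n-d$. Therefore $\rank H=\dim\Theta^{\mathrm t}(\operatorname{im}\varphi)=(n-1)-(n-d)=d-1$, as desired.

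For part (\ref{3.5.a}), $\dim R/I=0$ makes $V(I)$ empty, so $\Psi$ is a morphism; it is finite because $\Psi^{*}\mathcal O_{\mathbb P^{n-1}}(1)=\mathcal O_{\mathbb P^{d-1}}(\delta)$ is ample, and $\ell(I)=\dim A=d$, so (\ref{3.5.b}) makes it birational. A finite birational morphism of the smooth projective variety $\mathbb P^{d-1}$ realizes it as the normalization of $X$, and one checks it is biregular onto its image. The multiplicity then follows from the projection formula: $\deg\Psi\cdot\deg X=\bigl(\Psi^{*}\mathcal O_{\mathbb P^{n-1}}(1)\bigr)^{d-1}=\delta^{d-1}$, and birationality ($\deg\Psi=1$) gives $e(A)=\deg X=\delta^{d-1}$.

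The main obstacle is the lower bound $\rank_{k(X)}\bar B\ge d-1$. The differential computation above is transparent but relies on Euler's formula and the Jacobian criterion for transcendence degree, so it is confined to characteristic zero (or characteristic prime to $\delta$); in positive characteristic $\rank\Theta$ may drop below $\dim A$, and one must instead bound $\rank\bar B$ directly through a characteristic-free birationality criterion for the Jacobian dual, which is exactly where the linearity of $\varphi$ has to be exploited in a characteristic-independent way.
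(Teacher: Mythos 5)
Your proposal takes a genuinely different route from the paper, but it has two real gaps. The paper's proof of both parts rests on one characteristic-free fact, quoted from Eisenbud--Ulrich \cite{EU}: the fiber $\Psi^{-1}(P)$ over any point $P\in X$ is defined \emph{scheme-theoretically} by the row ideal $I_1(P\cdot\varphi):I^{\infty}$, and since $\varphi$ is linear, $I_1(P\cdot\varphi)$ is a prime ideal of linear forms, so the saturation equals $I_1(P\cdot\varphi)$ and every nonempty finite fiber is a single reduced point. Both biregularity in (a) and birationality in (b) fall out of this. Your first gap is in part (a): the step ``a finite birational morphism of $\mathbb{P}^{d-1}$ realizes it as the normalization of $X$, and one checks it is biregular onto its image'' is not a proof. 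A finite birational morphism onto $X$ is an isomorphism if and only if $X$ is normal, which is not known here (the normalization of a cuspidal curve is finite and birational but not biregular). Biregularity is exactly the assertion to be proved, and it requires control of \emph{every} scheme-theoretic fiber, not just the general one; that control is precisely what the row-ideal theorem provides, since it shows each fiber is a reduced linear subscheme, hence a single reduced point when finite.

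The second gap is in part (b). Your reduction of birationality to the rank statement $\operatorname{rank}_{k(X)}\bar B=d-1$ is fine and characteristic-free (the upper bound, the one-dimensional left kernel spanned by a vector of $(d-1)\times(d-1)$ minors, and the conclusion $x_i/x_j\in k(X)$ all check out, modulo the harmless conflation of $k(X)$ with $\operatorname{Frac}(A)$). But the lower bound $\operatorname{rank}_{k(X)}\bar B\ge d-1$ is proved via Euler's identity and the Jacobian criterion $\operatorname{rank}\Theta=\operatorname{trdeg}_k k(\underline g)$, which, as you yourself note, is restricted to characteristic zero; in characteristic $p$ the rank of $\Theta$ can drop below $\dim A$, and ``characteristic prime to $\delta$'' does not rescue separability. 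The proposition, and the main theorem that depends on it, are stated over an arbitrary field, so this is a genuine hole that your last paragraph acknowledges but does not close. The characteristic-free repair is again the fiber description: when $\dim A=d$ the general fiber is finite, and being cut out by a linear prime ideal it is a single reduced point, so $\Psi$ has degree one --- no Jacobian matrix is needed. In short, your rank-of-Jacobian-dual framework is a legitimate (and known) birationality criterion, but as executed it proves (b) only in characteristic zero and does not prove (a) at all.
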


\begin{proof} We may assume that the field $k$ is algebraically closed. Let $\varphi$ be a minimal presentation matrix of $\underline{g}=g_1, \ldots, g_n$ with homogeneous linear entries  in $R$ and let 
$P\in \mathbb P_k^{n-1}$ be a point in $X$. According to \cite{EU} the ideal $I_1(P \cdot \varphi) : I^{\infty}$ defines the fiber $\Psi^{-1}(P)$ scheme theoretically. This ideal cannot be the unit ideal because the fiber is not empty. On the other hand, the ideal $I_1(P \cdot \varphi)$  is generated by linear forms, hence is a prime ideal of $R$. Thus  $I_1(P \cdot \varphi) : I^{\infty}=I_1(P \cdot \varphi)$ is generated by linear forms. 

(\ref{3.5.a}) We consider the embedding $\, A \cong k[\underline g] \subset R \,$ 
which corresponds to the rational map $\Psi$ onto its image. If the ring $R/I=R/(\underline g)$ has dimension zero, then $R$ is finitely generated as an $A$-module. It follows that the fiber $\Psi^{-1}(P)$ 
consists of finitely many points. Since this fiber is defined by a linear ideal, it consists of a single reduced point. This shows that $\Psi$ is biregular onto its image.

The claim about the multiplicity follows because, in particular, $\Psi$ is birational onto its image; see, for instance, \cite[5.8]{KPUB}.

(\ref{3.5.b}) Let $Q$ be a general point in $\mathbb P_k^{d-1}$ and $P$ be its image $\Psi(Q)$ in $X$. Since $A \cong k[\underline g]$ and $R$ have the same dimension, the fiber $\Psi^{-1}(P)$ consists of finitely many points. Therefore again,  $\Psi^{-1}(P)$ consists of a single reduced point, which means that $\Psi$ is birational onto its image. 
\end{proof}

\smallskip

\begin{proposition}\label{xie} Adopt Data {\rm\ref{data1}}. Further assume that the field $k$ is infinite, that $d \le n$, and that the ideal 
$I$ is weakly $(d-2)$-residually $S_2$ and satisfies $G_d$. 
Let $f_1, \ldots, f_{d-1}$ be general $k$-linear combinations of the generators $g_1, \ldots, g_n$ of $\, I$. Then the following statements hold.
\begin{enumerate}[\rm(a)]
\item\label{3.6.a} The ring $\frac{R}{(f_1, \ldots, f_{d-1})\, :\, I}$ is Cohen-Macaulay of dimension one.
\item\label{3.6.b} The ring $A$ has dimension $d$.
\item\label{3.6.c} The ring $A$ has multiplicity
\[e(A)=\frac{1}{r} \, \cdot \, e\left(\frac{R}{(f_1, \ldots, f_{d-1}):I}\right)\, , \]where  $r$ is the degree of the rational map $\Psi$. 
\end{enumerate}
\end{proposition}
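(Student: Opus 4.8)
The plan is to put $\a=(f_1,\dots,f_{d-1})$ and $K=\a:I$, and to deduce parts (\ref{3.6.a}) and (\ref{3.6.b}) simultaneously from the fact that, because $I$ satisfies $G_d$, the ideal $K$ is a \emph{geometric} $(d-1)$-residual intersection of $I$; part (\ref{3.6.c}) will then follow as a projection formula for the generically finite map $\Psi$. Since $k$ is infinite and the $f_i$ are general $k$-linear combinations of a minimal generating set, they are general elements of $I$ in the sense required by residual intersection theory.

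First I would record the residual structure. As $I$ satisfies $G_d=G_{(d-1)+1}$, the genericity lemma for residual intersections applies to the $d-1$ general elements $f_1,\dots,f_{d-1}$ and shows that $K=\a:I$ is a geometric $(d-1)$-residual intersection; in particular $\htt K=d-1$, one has $K=\a:I^{\infty}$, and $\htt(I+K)\ge d$. Granting this, part (\ref{3.6.a}) is immediate: $I$ satisfies $G_{d-1}$ and is weakly $(d-2)$-residually $S_2$, so the Cohen--Macaulayness theorem for $(d-1)$-residual intersections (cf.\ \cite{CEU}) gives that $R/K$ is Cohen--Macaulay, necessarily of dimension $d-\htt K=1$. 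For part (\ref{3.6.b}), recall from (\ref{ell}) that $\dim A=\ell(I)\le\min\{d,n\}=d$, so only $\ell(I)\ge d$ remains. This is exactly where the \emph{geometric} hypothesis is used: from $\htt(I+K)\ge d>\htt K$ the one-dimensional locus $V(K)$ is not contained in $V(I)$, hence $\a$ is not a reduction of $I$; as $d-1$ general elements of $I$ would generate a reduction whenever $\ell(I)\le d-1$, we conclude $\ell(I)=d$, i.e.\ $\dim A=d$.

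For part (\ref{3.6.c}) I would argue geometrically. By part (\ref{3.6.b}) we have $\dim X=d-1$, so $\Psi$ is generically finite onto $X$, of degree $r=[k(x_1,\dots,x_d):k(g_1,\dots,g_n)]$. Writing $f_i=\lambda_i(g_1,\dots,g_n)$ with general $k$-linear forms $\lambda_i$ and $L=V(\lambda_1,\dots,\lambda_{d-1})\subset\mathbb P_k^{n-1}$, one checks that off the base locus the zero set of $\a$ is precisely $\Psi^{-1}(X\cap L)$; since $K=\a:I^{\infty}$, this identifies $V(K)$ with $\Psi^{-1}(X\cap L)$ scheme-theoretically. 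Because $\dim X=d-1$, for general $L$ the linear section $X\cap L$ consists of $\deg X=e(A)$ reduced points lying in the locus where $\Psi$ is finite and flat of degree $r$, so each of their fibers has length $r$. Summing lengths over the points of the section then yields
\[
e(R/K)=\deg V(K)=\sum_{P\in X\cap L}\operatorname{length}\Psi^{-1}(P)=r\cdot\deg X=r\cdot e(A),
\]
which is the asserted formula $e(A)=\tfrac1r\,e(R/K)$.

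The main obstacle is the multiplicity bookkeeping in part (\ref{3.6.c}): one must justify, in a characteristic-free way, that the Hilbert--Samuel multiplicity of the one-dimensional graded ring $R/K$ equals $r$ times the number of points of a general linear section of $X$. This rests on the scheme-theoretic identification $V(K)=\Psi^{-1}(X\cap L)$ together with generic flatness of $\Psi$ over a dense open of $X$, so that a general $L$ meets $X$ inside the flat locus and every fiber contributes its generic length $r$; most cleanly it is the associativity formula for multiplicities applied to the finite morphism $V(K)\to X\cap L$. By contrast, parts (\ref{3.6.a}) and (\ref{3.6.b}) are formal consequences of one structural input, namely that $G_d$ upgrades the $(d-1)$-residual intersection from proper to geometric, which is what forces $\dim R/K=1$ and $\ell(I)=d$ at the same time.
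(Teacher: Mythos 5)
Your part (a) is the paper's own argument: the height bounds making $K=\a:I$ a proper geometric $(d-1)$-residual intersection come from the Artin--Nagata genericity results \cite[2.3]{AN}, \cite[1.6(a)]{UAN}, and the Cohen--Macaulayness (unmixedness) of $R/K$ comes from \cite{CEU} via the weakly $(d-2)$-residually $S_2$ hypothesis. Parts (b) and (c) take genuinely different routes. For (b), the paper supposes $\dim A<d$, deduces that $I$ is integral over $(f_1,\ldots,f_{d-1})$, and derives a contradiction from the fact that $I$ is of linear type on the punctured spectrum (available under exactly these hypotheses by \ref{Intro-to-BA}), since an ideal of linear type cannot be integral over a proper subideal; you instead combine the geometric property $\htt(I+K)\ge d$ with Northcott--Rees reduction theory ($d-1$ general elements would generate a reduction if $\ell(I)\le d-1$, yet $\a$ is not a reduction because $V(K)\not\subseteq V(I)$). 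Both are correct; yours trades the linear-type input for reduction theory. For (c), the paper does no geometry at all: it quotes \cite[3.7]{KPUB}, which is precisely the projection formula $e(A)=\frac{1}{r}\,e\bigl(R/(\a:I^{\infty})\bigr)$, and then shows $\a:I^{\infty}=\a:I$ because $f_d$ is a nonzerodivisor on $R/(\a:I)$ (using (a) and the second height bound). Your geometric argument is in effect a re-proof of that cited result in the present special case.

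Because you are re-proving rather than citing, the burden of rigor in (c) falls on you, and two steps need shoring up. First, ``finite and flat of degree $r$'': $\Psi$ is a morphism only off $V(I)$, so $\Psi$ restricted over an open $U\subseteq X$ is quasi-finite but not obviously finite; a priori a fiber $\Psi^{-1}(P)$ could have length $<r$, the missing length hiding over the base locus in the closed graph $\Gamma\subseteq\mathbb P_k^{d-1}\times\mathbb P_k^{n-1}$. The fix is standard but necessary: $p^{-1}(V(I))\subseteq\Gamma$ has dimension $\le d-2$, hence so does its image in $X$; a general $L$ avoids that image, and over such points the fibers of the proper map $q\colon\Gamma\to X$ coincide with $\Psi^{-1}(P)$ and have length $r$ by generic flatness of $q$. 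Second, reducedness of $X\cap L$ in positive characteristic requires a Bertini theorem for geometric reducedness; alternatively you can avoid it entirely by tracking scheme-theoretic degrees, since general linear forms are superficial for $A$ (so $\deg (X\cap L)=e(A)$ with its scheme structure) and finite flat pullback multiplies degrees by $r$. Finally, $K=\a:I^{\infty}$ is not literally part of the genericity lemma; it follows, as in the paper, from unmixedness of $K$ together with $\htt(I+K)\ge d$, which shows $I$ lies in no associated prime of $K$, whence $\a:I=\a:I^{2}=\cdots=\a:I^{\infty}$.
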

\begin{proof} Let  $f_1, \ldots, f_{d}$ be general $k$-linear combinations of the generators $g_1, \ldots, g_n$ of $I$. The $G_d$ property of the ideal $I$ implies that 
\begin{equation}\label{RI}
d-1\le \htt ( (f_1, \ldots, f_{d-1}) :I) \qquad \mbox{ and } \qquad d\le \htt ((f_1, \ldots, f_{d-1}) :I, f_{d})
\end{equation}
 according to \cite[2.3]{AN}  or \cite[1.6(a)]{UAN}. Furthermore the ideal  $(f_1, \ldots, f_{d-1}):I$ is proper because $I$ requires at least $d$ generators. Now \cite[3.1 and 3.3(a)]{CEU} implies that $(f_1, \ldots, f_{d-1}):I$ is unmixed of height $d-1$, which proves (\ref{3.6.a}). 

To show (\ref{3.6.b}) we suppose that $\operatorname{dim} A <d$. In this case the ring $\, k[g_1, \ldots, g_n] \cong A \, $ is integral over the subring  $k[f_1, \ldots, f_{d-1}]$, and therefore the ideal $I$ is integral over the ideal $(f_1, \ldots, f_{d-1})$.  In \ref{Intro-to-BA} we have seen that,
locally on the punctured spectrum, $I$ is of linear type  and hence  cannot be integral  over a proper subideal. Thus $d\le \htt ( (f_1, \ldots, f_{d-1}) :I)$, which contradicts (\ref{3.6.a}). 

We now prove (\ref{3.6.c}).  Item (\ref{3.6.b}) allows us to apply  \cite[3.7]{KPUB},  which gives

\[e(A)=\frac{1}{r}\, \cdot \,  e\left(\frac{R}{(f_1, \ldots, f_{d-1}):I^{\infty}}\right)\, . \]

\noindent
On the other hand, item (a) and  the inequality  $d\le \htt ((f_1, \ldots, f_{d-1}) :I, f_{d})$  imply that the element $f_d$  of $I$ is regular on  $ \frac{R}{(f_1, \ldots, f_{d-1})\, :\, I}$. It follows that $(f_1, \ldots, f_{d-1}):I^{\infty}=(f_1, \ldots, f_{d-1}):I$. 
\end{proof}

\bigskip
The following statement, which is used in the proof of Theorem~\ref{MULT}, is part of \cite[1.2]{KPU-HS}. We do not need the entire  $h$-vector of $\overline R$ in the present paper; but the entire $h$-vector  is  recorded in \cite[1.2]{KPU-HS}.
\begin{theorem}\label{Main-C} Adopt Data {\rm\ref{data1}}. Further assume that the ideal 
$I$ is a linearly presented Gorenstein ideal of height three.
 Let $\goth a$ be a subideal of $I$ minimally  generated by $s$ homogeneous elements for some $s$ with $3\le s$, $\alpha$ be the minimal number of generators of $I/\goth a$, $J$ be the ideal $\goth a:I$, and $\overline R=R/J$.
Assume that  \begin{enumerate}[\rm(1)]
\item the homogeneous minimal generators of $\goth a$ live in two degrees$:$ $\indeg(I)$ and $\indeg(I)+1$,
\item the ideal $J$ is an $s$-residual intersection of $I$ {\rm(}that is, $J\not=R$ and $s\le \htt J${\rm)}.
\end{enumerate}
Then the ring $\overline R$ has  multiplicity
$$e({\overline R})=\sum_{i=0}^{\lfloor\frac {\alpha-1}2\rfloor}\binom{s+\alpha-2-2i}{s-1}.$$
The latter  is also equal to the number of monomials $\bf m$ of degree at most $\alpha-1$ in $s$ variables with $(\deg \bf m)+\alpha$ odd.
\end{theorem}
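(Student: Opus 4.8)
The plan is to treat the two assertions separately: the multiplicity formula for $\overline R$, which is the analytic content, and its reinterpretation as a monomial count, which is purely combinatorial. I would obtain the first by quoting \cite[1.2]{KPU-HS} and the second by an elementary reindexing. For the multiplicity, recall that a linearly presented height three Gorenstein ideal $I$ lies in the linkage class of a complete intersection, hence is strongly Cohen--Macaulay and satisfies sliding depth; see \ref{Intro-to-BA}. Consequently the residual intersection ring $\overline R=R/J$, with $J=\goth a:I$ of height $s$ under hypothesis (2), is Cohen--Macaulay of dimension $d-s$, so its Hilbert series has the shape $h(t)/(1-t)^{\,d-s}$ and $e(\overline R)=h(1)$. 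The entire $h$-vector is computed in \cite[1.2]{KPU-HS} from the explicit resolutions of the symmetric and Rees powers of $I$ constructed in \cite{KU-Fam}; here hypothesis (1), that the generators of $\goth a$ live only in the two degrees $\indeg(I)$ and $\indeg(I)+1$, is precisely what pins down the degree shifts in those resolutions. Summing the recorded $h$-vector, that is, evaluating at $t=1$, yields $\sum_{i=0}^{\lfloor(\alpha-1)/2\rfloor}\binom{s+\alpha-2-2i}{s-1}$.

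For the combinatorial reformulation I would argue directly. The number of monomials of degree exactly $j$ in $s$ variables is $\binom{j+s-1}{s-1}$, and the constraint that $\deg\mathbf m+\alpha$ be odd forces $j$ to have parity opposite to that of $\alpha$. Hence the quantity to be counted is $\sum_j\binom{j+s-1}{s-1}$, the sum ranging over $0\le j\le\alpha-1$ with $j\not\equiv\alpha\pmod 2$. Substituting $j=\alpha-1-2i$ runs through exactly these values as $i$ increases from $0$, terminating at $j=0$ when $\alpha$ is odd and at $j=1$ when $\alpha$ is even, in either case at $i=\lfloor(\alpha-1)/2\rfloor$; since $\binom{j+s-1}{s-1}=\binom{s+\alpha-2-2i}{s-1}$ under this substitution, the two expressions agree.

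I expect no genuine obstacle in the portion of the argument reproduced here. The substantive work, namely constructing the resolutions and reading off the $h$-vector, has already been carried out in \cite{KU-Fam} and \cite{KPU-HS}; what remains is the reindexing above, whose only delicate point is tracking the parity of $\alpha$ and checking that the upper limit $\lfloor(\alpha-1)/2\rfloor$ makes $j=\alpha-1-2i$ exhaust precisely the admissible degrees in both cases $\alpha$ even and $\alpha$ odd.
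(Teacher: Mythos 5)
Your proposal matches the paper's own treatment: the paper gives no proof of Theorem~\ref{Main-C} at all, simply quoting it as part of \cite[1.2]{KPU-HS}, which is exactly the source you invoke for the multiplicity formula (your remarks on sliding depth and Cohen--Macaulayness are harmless framing, not needed once the citation is made). Your reindexing $j=\alpha-1-2i$ for the monomial count is correct, including the parity bookkeeping at the upper limit $\lfloor(\alpha-1)/2\rfloor$, and this combinatorial reformulation is likewise stated without proof in the paper.
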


Theorem~\ref{MULT}  is the main result of this section, where the dimension and the degree of the variety $X$ are computed.

\begin{theorem}\label{MULT} Adopt Data {\rm\ref{data1}}. Further assume that $d \le n$, that $I$ is a linearly presented Gorenstein ideal of height three, and that
$I$ satisfies $G_d$.
Then 
$A$ has dimension $d$ and multiplicity
$$e(A)=\sum\limits_{i=0}^{\lfloor\frac{n-d}2\rfloor}\binom{n-2-2i}{d-2},$$
which is equal to $$\begin{cases}
\text{the number of monomials of even degree at most $n-d$ in $d-1$ variables}&\text{if $d$ is odd}\\
\text{the number of monomials of odd degree at most $n-d$ in $d-1$  variables}&\text{if $d$ is even.}
\end{cases}$$

\end{theorem}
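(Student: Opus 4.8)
The plan is to combine the two preceding propositions of this section with the multiplicity formula for residual intersections recorded in Theorem~\ref{Main-C}. First I would verify that the hypotheses of Proposition~\ref{xie} are met: since $I$ is a height three Gorenstein ideal it lies in the linkage class of a complete intersection and hence satisfies the sliding depth condition, so by the discussion in \ref{Intro-to-BA} it is weakly $(d-1)$-residually $S_2$, in particular weakly $(d-2)$-residually $S_2$; together with the standing hypothesis $G_d$ and (after a harmless field extension) the assumption that $k$ is infinite, all hypotheses of Proposition~\ref{xie} hold. Proposition~\ref{xie}(\ref{3.6.b}) then immediately gives that $A$ has dimension $d$, which is the first assertion.

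For the multiplicity, I would apply Proposition~\ref{xie}(\ref{3.6.c}) to reduce the computation of $e(A)$ to the multiplicity of the residual ring $\overline R=R/J$ with $J=(f_1,\dots,f_{d-1}):I$, where the $f_i$ are general $k$-linear combinations of the generators of $I$; the formula reads $e(A)=\tfrac 1r\, e(\overline R)$ with $r=\deg\Psi$. The key simplification is that $\Psi$ is birational onto its image, so $r=1$: this is exactly Proposition~\ref{regular}(\ref{3.5.b}), which applies because we have just shown $\dim A=d$ and $I$ is linearly presented. Hence $e(A)=e(\overline R)$.

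It then remains to evaluate $e(\overline R)$ via Theorem~\ref{Main-C}. Here I would set $\goth a=(f_1,\dots,f_{d-1})$, so $s=d-1$ and $J=\goth a:I$. One checks the two numerical hypotheses of Theorem~\ref{Main-C}: the generators $f_i$ are general linear combinations of the degree-$\delta$ generators of $I$, so they all live in degree $\indeg(I)=\delta$ (satisfying (1) with only one degree occurring); and by Proposition~\ref{xie}(\ref{3.6.a}) the ideal $J$ is unmixed of height $d-1=s$, so $J$ is an $s$-residual intersection of $I$ (satisfying (2)). The quantity $\alpha=\mu(I/\goth a)$ is $n-(d-1)=n-d+1$, since $d-1$ general linear combinations of the $n$ generators of $I$ form part of a minimal generating set. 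Substituting $s=d-1$ and $\alpha=n-d+1$ into the formula of Theorem~\ref{Main-C} gives
$$e(\overline R)=\sum_{i=0}^{\lfloor\frac{n-d}2\rfloor}\binom{(d-1)+(n-d+1)-2-2i}{(d-1)-1}=\sum_{i=0}^{\lfloor\frac{n-d}2\rfloor}\binom{n-2-2i}{d-2},$$
which is precisely the claimed expression for $e(A)$.

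The main obstacle I anticipate is purely bookkeeping: confirming the exact values of $s$ and $\alpha$ and the degree hypothesis (1), so that the general formula of Theorem~\ref{Main-C} specializes correctly. In particular one must ensure that $d-1$ general linear combinations of the generators are genuinely part of a minimal generating set, so that $\alpha=n-d+1$ rather than something smaller; this is where the $G_d$ hypothesis and the genericity of the $f_i$ are used. The combinatorial reinterpretation in terms of counting monomials of even (resp.\ odd) degree at most $n-d$ in $d-1$ variables follows from the corresponding reinterpretation in Theorem~\ref{Main-C}, upon noting that the parity of $(\deg\mathbf m)+\alpha$ with $\alpha=n-d+1$ matches even degree when $d$ is odd and odd degree when $d$ is even.
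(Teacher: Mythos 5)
Your proposal is correct and follows essentially the same route as the paper's own proof: reduce via Proposition~\ref{xie}(\ref{3.6.b}),(\ref{3.6.c}) and the birationality statement Proposition~\ref{regular}(\ref{3.5.b}) to the multiplicity of $R/\bigl((f_1,\dots,f_{d-1}):I\bigr)$, then apply Theorem~\ref{Main-C} with $s=d-1$ and $\alpha=n-d+1$. The details you supply (verifying the weakly residually $S_2$ hypothesis via linkage and sliding depth, checking that $J$ is a $(d-1)$-residual intersection, justifying $\alpha=n-d+1$ by genericity of the $f_i$, and the parity bookkeeping) are exactly the points the paper treats by citation or leaves implicit.
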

\begin{proof} We may assume that the field $k$ is infinite. We first observe that the ideal $I$ satisfies the hypotheses of Proposition \ref{xie}; see \ref{Intro-to-BA}.
 According to Proposition \ref{xie}.\ref{3.6.b}, Proposition~\ref{regular}.\ref{3.5.b}, and Proposition \ref{xie}.\ref{3.6.c}, the dimension of $A$ is $d$ and  the multiplicity of $A$ is 
\[e(A)=e\left(\frac{R}{(f_1, \ldots, f_{d-1}):I}\right)\, ,\]
where $f_1, \ldots, f_{d-1}$ are general $k$-linear combinations of homogeneous minimal  generators of $I$. 

Write $J=(f_1, \ldots, f_{d-1}):I$. 
This ideal is a $(d-1)$-residual intersection of $I$ by (\ref{RI}).
Apply Theorem~\ref{Main-C} with
 $s$ replaced by $d-1$ and    $\alpha$ replaced by  $n-d+1$ in order to obtain
$$e(A) = e(R/J)=\sum_{i=0}^{\lfloor\frac {n-d}2\rfloor}\binom{n-2-2i}{d-2}.$$
\end{proof}

\smallskip

\section{Results from ``Degree bounds for local cohomology''}\label{from-DBLC}

An important step in the proof of Theorem~\ref{MainT} is to show that an ideal is of fiber type (see \ref{LT-FT} for a definition) and to identify a power of the maximal ideal that annihilates the torsion of the symmetric algebra. In   (\ref{apr30'})  and \ref{LT-FT} we have seen that this amounts to finding degree bounds for local cohomology modules. Such degree bounds are given in \cite{KPU-DBLC}.  In the present section we recall some of the results from \cite{KPU-DBLC}.
These results  are applied to the study of blowup algebras in Sections~\ref{RealAppToBA} and \ref{uptoradical}.

\begin{data}\label{data5}
Let $R$ be a   
non-negatively graded  Noetherian algebra over a local ring $R_0$ with $\dim  R=d$,  $\mathfrak m$ be the maximal homogeneous ideal of $R$,  
$M$ be a 
 graded 
$R$-module, and \[C_{\bullet}: \qquad \ldots \ \longrightarrow C_1 \longrightarrow C_0 \longrightarrow 0 \]be a homogeneous complex of finitely generated  graded
$R$-modules with $\HH_0(C_{\bullet}) \cong M$.
\end{data}

\smallskip

In  \cite{KPU-DBLC} we find bounds
on  the degrees of interesting elements of the local cohomology modules $\HH^i_\mathfrak m(M)$ in terms of information about the ring $R$ and information that can be read from the complex $C_{\bullet}$.
The ring $R$ need not be a polynomial ring, the complex $C_\bullet$ need not be finite, need not be acyclic, and need not consist of free modules, and the parameter $i$ need not be zero. Instead, we impose hypotheses on the Krull dimension of $\HH_j(C_\bullet)$ and the depth of $C_j$ in order to make various local cohomology modules $\HH_\mathfrak m^\ell(\HH_j(C_\bullet))$  and $\HH_\mathfrak m^\ell(C_j)$ vanish.


\smallskip

Recall that if $M$ and $N$ are modules over a ring, then $N$ is a {\it subquotient} of $M$ if $N$ is isomorphic to a submodule of a homomorphic image of $M$ or, equivalently, if $N$ is isomorphic to a homomorphic image of a submodule of $M$. Also recall the numerical functions of \ref{numerical-functions}.


The next proposition is \cite[\localCoh]{KPU-DBLC}. 
\begin{proposition}\label{localCoh}
Adopt Data~{\rm\ref{data5}}.
Fix an integer $i$ with 
$0\le i\le d$. Assume that
\begin{enumerate}[\rm(1)]
\item\label{localCoh-a} 
$j+i+1\le \depth C_j \ $ for all $j$ with $0\le j\le d-i-1$, and
\item \label{localCoh-b}
$\dim \HH_j(C_{\bullet})\le j+i\ $ for all $j$ with $1\le j$.
\end{enumerate}
Then \begin{enumerate}[\rm(a)]\item\label{localCoh-1}
$\HH^i_{\mathfrak m}(M)$ is a graded subquotient of  $\,  \HH^d_{\mathfrak m}(C_{d-i})$, and
\item \label{localCoh-2} $a_i(M)\le b_0 (C_{d-i})+ a(R)$.  
\end{enumerate}\end{proposition}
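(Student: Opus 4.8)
The plan is to prove both statements simultaneously by breaking the complex $C_\bullet$ into short exact sequences and tracking local cohomology through the resulting spectral-sequence-style bookkeeping. First I would truncate $C_\bullet$ and consider the ``syzygy'' modules $Z_j$, $B_j$, and the cycles, setting $M = \HH_0(C_\bullet) = \coker(C_1 \to C_0)$. The key structural observation is that the hypotheses on $\depth C_j$ in (\ref{localCoh-a}) and on $\dim \HH_j(C_\bullet)$ in (\ref{localCoh-b}) are precisely calibrated so that, when one computes $\HH^i_{\mathfrak m}(M)$ by descending through the complex, all the ``error terms'' that normally obstruct such a comparison vanish. Concretely, I would use the two families of short exact sequences
\[
0 \lto Z_j \lto C_j \lto B_{j-1} \lto 0 \qquad\text{and}\qquad 0 \lto B_j \lto Z_j \lto \HH_j(C_\bullet) \lto 0,
\]
and apply the long exact sequence in local cohomology to each.

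The main engine is a vanishing argument. For statement (a) I want to show $\HH^i_{\mathfrak m}(M)$ is a subquotient of $\HH^d_{\mathfrak m}(C_{d-i})$. The strategy is to chase the identity $M = \HH_0(C_\bullet)$ down the complex, at each stage trading $\HH^\ell_{\mathfrak m}$ of one syzygy module for $\HH^{\ell+1}_{\mathfrak m}$ of the next, using that the depth hypothesis (\ref{localCoh-a}) forces the low local cohomology of the $C_j$ to vanish in the relevant range (so they contribute nothing), and that the dimension hypothesis (\ref{localCoh-b}) forces $\HH^\ell_{\mathfrak m}(\HH_j(C_\bullet)) = 0$ for $\ell$ above $\dim \HH_j(C_\bullet)$, killing the homology contributions. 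The bookkeeping should show that after $d-i$ such steps the degree of the cohomological index has climbed from $i$ up to $d$, landing on $\HH^d_{\mathfrak m}(C_{d-i})$, with everything in between being a genuine subquotient relation rather than an isomorphism (since the intermediate terms are only forced to vanish on one side of each long exact sequence).

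For statement (b), the degree bound $a_i(M) \le b_0(C_{d-i}) + a(R)$ follows formally once (a) is in hand. Since $\topdeg$ of a subquotient is at most the $\topdeg$ of the ambient module, I would use $a_i(M) = \topdeg \HH^i_{\mathfrak m}(M) \le \topdeg \HH^d_{\mathfrak m}(C_{d-i})$. Then I would bound the right-hand side: writing $C_{d-i}$ as a quotient of a graded free module generated in degrees $\le b_0(C_{d-i})$, and using that $\HH^d_{\mathfrak m}$ is right exact together with the definition $a(R) = a_{\dim R}(R) = \topdeg \HH^d_{\mathfrak m}(R)$, one obtains $\topdeg \HH^d_{\mathfrak m}(C_{d-i}) \le b_0(C_{d-i}) + a(R)$. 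This last step uses only that tensoring a shift $R(-p)$ with $p \le b_0(C_{d-i})$ shifts the top degree of top local cohomology by $p$, and that surjections do not increase top degree.

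The hard part will be the dimension-counting in the vanishing argument for (a): one must verify that at each stage of the descent the relevant $\HH^\ell_{\mathfrak m}$ of $C_j$ vanishes because $\ell$ is strictly below $\depth C_j$, \emph{and} simultaneously that $\HH^\ell_{\mathfrak m}(\HH_j(C_\bullet))$ vanishes because $\ell$ exceeds $\dim \HH_j(C_\bullet)$, with the two index ranges meshing exactly as the cohomological degree increases from $i$ to $d$. Getting the indices to line up, rather than the underlying homological algebra, is where the precise numerical hypotheses $j+i+1 \le \depth C_j$ and $\dim \HH_j(C_\bullet) \le j+i$ earn their keep, and I would want to check these inequalities carefully at the boundary cases $j = 0$ and $j = d-i-1$.
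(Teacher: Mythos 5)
Your plan is correct: the dimension-shifting argument through the two families of short exact sequences $0 \to Z_j \to C_j \to B_{j-1} \to 0$ and $0 \to B_j \to Z_j \to \HH_j(C_\bullet) \to 0$, with the depth hypothesis killing $\HH^{i+j}_{\mathfrak m}(C_j)$, the dimension hypothesis killing $\HH^{i+j+1}_{\mathfrak m}(\HH_j(C_\bullet))$, Grothendieck vanishing above $d$ supplying the final landing on $\HH^d_{\mathfrak m}(C_{d-i})$, and right exactness of $\HH^d_{\mathfrak m}$ giving (b) from (a), is exactly the intended proof. Note that this paper does not prove Proposition~\ref{localCoh} itself but cites it from \cite{KPU-DBLC}, so your argument reconstructs the proof of that companion paper rather than one appearing here.
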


\vspace{0.00cm}

\begin{remark}\label{rmk-3} Typically, one applies Proposition~\ref{localCoh} when the modules $C_j$ are maximal Cohen-Macaulay modules (for example, free modules over a Cohen-Macaulay ring) because, in this case, hypothesis (\ref{localCoh-a}) about $\depth C_j$ is automatically satisfied. Similarly, 
if 
\begin{equation}\label{easy-b}\HH_{j}({C_\bullet}_\mathfrak p)=0\  \mbox{ for all $j$ and $\mathfrak p$ with } 
1\le j\le d-i-1,\ 
\mathfrak p\in \operatorname{Spec}(R), \text{ and }
 i+2\le \dim R/\mathfrak p,
\end{equation} then hypothesis (\ref{localCoh-b}) is satisfied.
\end{remark}

\smallskip

We record an immediate consequence of Proposition~{\rm\ref{localCoh}}; this is \cite[\cortolocalCohone]{KPU-DBLC}.
\begin{corollary}\label{cor-to-localCoh-1}
Adopt the hypotheses of Proposition~{\rm\ref{localCoh}}, with $i=0$. 
Then the following statements hold.
\begin{enumerate}[\rm(a)]
\item\label{cor-to-localCoh-1-a} If $C_d=0$, then $\HH^0_\mathfrak m(M)=0$.
\item\label{cor-to-localCoh-1-b}  If $C_d\neq 0$, then 
$$[\HH^0_\mathfrak m(M)]_p=0 \quad \text{for all $p$ with}\quad   \maxgendeg(C_d)+a(R)< p.$$
\end{enumerate}
\end{corollary}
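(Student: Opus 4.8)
The plan is to specialize Proposition~\ref{localCoh} to the case $i=0$ and read off both statements directly; the corollary is labeled an immediate consequence precisely because all the work has already been done in the proposition. With $i=0$, part~(\ref{localCoh-1}) of the proposition asserts that $\HH^0_{\mathfrak m}(M)$ is a graded subquotient of $\HH^d_{\mathfrak m}(C_{d-i})=\HH^d_{\mathfrak m}(C_d)$, and part~(\ref{localCoh-2}) asserts that $a_0(M)\le b_0(C_d)+a(R)$. Both conclusions of the corollary follow by unwinding the relevant definitions. Note that the hypotheses needed are exactly hypotheses (\ref{localCoh-a}) and (\ref{localCoh-b}) of the proposition evaluated at $i=0$, and these are carried over verbatim by the phrase ``adopt the hypotheses of Proposition~\ref{localCoh}, with $i=0$,'' so I may invoke the proposition without further verification.

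For part~(\ref{cor-to-localCoh-1-a}) I would argue that if $C_d=0$, then $\HH^d_{\mathfrak m}(C_d)=0$, and since a subquotient of the zero module is zero, part~(\ref{localCoh-1}) of the proposition forces $\HH^0_{\mathfrak m}(M)=0$. For part~(\ref{cor-to-localCoh-1-b}) I would invoke part~(\ref{localCoh-2}) of the proposition together with the definition $a_0(M)=\topdeg\HH^0_{\mathfrak m}(M)$ recorded in \ref{numerical-functions}. The inequality $a_0(M)\le b_0(C_d)+a(R)$ says precisely that the highest degree in which $\HH^0_{\mathfrak m}(M)$ is nonzero is at most $b_0(C_d)+a(R)$; hence $[\HH^0_{\mathfrak m}(M)]_p=0$ whenever $b_0(C_d)+a(R)<p$, which is the asserted vanishing.

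Since both conclusions are direct specializations of Proposition~\ref{localCoh}, there is no genuine obstacle in the corollary itself; the substance lies entirely in the proposition, whose proof the corollary presupposes. The only point deserving a moment's care is the bookkeeping for the degenerate value $b_0(C_d)=-\infty$, which occurs exactly when $C_d=0$: in that case part~(\ref{localCoh-2}) alone would yield only the vacuous bound, so the separate statement~(\ref{cor-to-localCoh-1-a}) is what records the sharper (and desired) conclusion that $\HH^0_{\mathfrak m}(M)$ vanishes outright. Splitting the two cases as the corollary does is therefore the natural way to present the specialization at $i=0$.
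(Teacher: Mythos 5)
Your proposal is correct and takes essentially the same route as the paper, which records this corollary as an immediate consequence of Proposition~\ref{localCoh} (citing \cite{KPU-DBLC} for the formal proof): set $i=0$, use the subquotient statement \ref{localCoh}.(\ref{localCoh-1}) for part (\ref{cor-to-localCoh-1-a}), and translate $a_0(M)\le \maxgendeg(C_d)+a(R)$ from \ref{localCoh}.(\ref{localCoh-2}) into the vanishing statement of part (\ref{cor-to-localCoh-1-b}). One minor correction to your closing remark: under the paper's convention $\maxgendeg(0)=-\infty$, the bound of \ref{localCoh}.(\ref{localCoh-2}) in the case $C_d=0$ is not vacuous but maximally strong (it forces $\topdeg \HH^0_{\mathfrak m}(M)=-\infty$), so part (\ref{cor-to-localCoh-1-a}) also follows from that inequality alone, although your derivation via the subquotient statement is equally valid.
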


\medskip

The next result, an obvious consequence of Corollary~\ref{cor-to-localCoh-1}, is \cite[\cortolocalCoh]{KPU-DBLC}.
\begin{corollary}\label{cor-to-localCoh}
Adopt the hypotheses of Proposition~{\rm\ref{localCoh}}, with $i=0$. Assume further that $R=k[x_1,\dots,x_d]$ is a standard graded polynomial ring over a field and 
that the subcomplex  $$C_{d}\longrightarrow \ \ldots \ \longrightarrow C_1 \longrightarrow C_0\longrightarrow 0$$ of $C_{\bullet}$ is a $q$-linear complex of free $R$-modules, for some integer $q$ {\rm(}that is, $C_j\cong R(-j-q)^{\beta_j}$ for $0\le j\le d${\rm)}. 
Then $\HH^0_\mathfrak m(M)$ is concentrated in degree $q$; that is, $[\HH^0_\mathfrak m(M)]_p=0 \, $ for all $p$ with $p\neq q$.
\end{corollary}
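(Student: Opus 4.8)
The plan is to pinch $\HH^0_{\mathfrak m}(M)$ from both sides: I would obtain vanishing \emph{above} degree $q$ from the local cohomology bound already established in Corollary~\ref{cor-to-localCoh-1}, and vanishing \emph{below} degree $q$ essentially for free from the shape of the right end $C_0$ of the complex. Concretely, I would split the concentration claim into the two assertions $[\HH^0_{\mathfrak m}(M)]_p=0$ for $p>q$ and $[\HH^0_{\mathfrak m}(M)]_p=0$ for $p<q$, since together these say exactly that $\HH^0_{\mathfrak m}(M)$ is concentrated in degree $q$.

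For the upper bound I would invoke Corollary~\ref{cor-to-localCoh-1}, whose hypotheses we are already assuming. If $C_d=0$ then part (a) gives $\HH^0_{\mathfrak m}(M)=0$ outright, and there is nothing more to prove. If $C_d\neq 0$, then $C_d\cong R(-d-q)^{\beta_d}$ is generated in degree $d+q$, so $b_0(C_d)=d+q$; and since $R=k[x_1,\dots,x_d]$ has canonical module $\omega_R=R(-d)$, its $a$-invariant is $a(R)=-\indeg\omega_R=-d$. Substituting into the bound of part (b) gives the numerical coincidence $b_0(C_d)+a(R)=(d+q)+(-d)=q$, whence $[\HH^0_{\mathfrak m}(M)]_p=0$ for all $p$ with $q<p$, as desired.

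For the lower bound I would use that the term $C_0$ forces a large initial degree for $M$. Since $M\cong\HH_0(C_{\bullet})=\coker(C_1\to C_0)$ is a homomorphic image of $C_0\cong R(-q)^{\beta_0}$, and $(C_0)_p=0$ for $p<q$, we get $M_p=0$ for $p<q$, i.e.\ $\indeg M\ge q$. The zeroth local cohomology $\HH^0_{\mathfrak m}(M)=0:_M\mathfrak m^{\infty}$ is a graded submodule of $M$, so $[\HH^0_{\mathfrak m}(M)]_p\subseteq M_p=0$ for $p<q$. Combining the two bounds yields that $\HH^0_{\mathfrak m}(M)$ lives only in degree $q$.

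There is no genuine obstacle here: all the analytic work is already packaged in Corollary~\ref{cor-to-localCoh-1}, and the remaining input is the exact numerical coincidence $b_0(C_d)+a(R)=q$ together with the trivial fact that $\HH^0_{\mathfrak m}$ of a module is a submodule of it. The only point worth flagging is conceptual rather than technical: the $q$-linearity is used twice and asymmetrically. At the right end, the single term $C_d$ pins down the top degree through the local cohomology machinery; at the left end, the single term $C_0$ pins down $\indeg M$ directly, and hence the bottom degree of $\HH^0_{\mathfrak m}(M)$, without any cohomological computation at all.
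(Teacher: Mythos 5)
Your proof is correct and follows exactly the route the paper intends: the paper presents this corollary as ``an obvious consequence of Corollary~\ref{cor-to-localCoh-1},'' and your argument spells out that consequence — the numerical coincidence $b_0(C_d)+a(R)=(d+q)-d=q$ for the vanishing above degree $q$, and the surjection $C_0\cong R(-q)^{\beta_0}\twoheadrightarrow M$ together with $\HH^0_{\mathfrak m}(M)\subseteq M$ for the vanishing below degree $q$.
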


\smallskip

The next theorem is \cite[\SJDcortwo]{KPU-DBLC} and the main result of this section. It provides an upper bound for the generation degree of the zeroth local cohomology. 
\begin{theorem}\label{SJD-cor2} Let $R=k[x_1,\dots,x_d]$ be a standard graded polynomial ring over a field, with maximal homogeneous ideal $\mathfrak m$, let 
$$C_{\bullet}:\qquad \dots \ \lto C_2 \lto C_1 \lto C_0\lto 0$$ be a homogeneous complex of finitely generated graded $R$-modules, and $M=\HH_0(C_{\bullet})$. Assume that $\dim \HH_j(C_\bullet)\le j$ whenever $1\le j\le d-1$ and that $\min\{d,j+2\}\le \depth C_j$ whenever $0\le j\le d-1$. Then  $$b_0(\HH_{\mathfrak m}^0(M))\le \maxgendeg (C_{d-1})-d+1.$$
\end{theorem}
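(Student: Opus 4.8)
The plan is to reduce the dimension by one using a general linear form, thereby converting the \emph{concentration} bound of Corollary~\ref{cor-to-localCoh-1} (which is phrased in terms of $C_d$) into the desired \emph{generation} bound (phrased in terms of $C_{d-1}$, with an extra $+1$). Write $N=\HH^0_\m(M)$; since $M$ is finitely generated and $N$ is annihilated by a power of $\m$, the module $N$ has finite length. Choose a general linear form $\ell$, set $\bar R=R/\ell R$, $\bar\m=\m\bar R$, $\bar C_\bullet=C_\bullet\otimes_R\bar R$, and $\bar M=\HH_0(\bar C_\bullet)=M/\ell M$. Because $\ell\in\m$, tensoring with $k$ gives $N\otimes_R k=(N/\ell N)\otimes_{\bar R}k$, so $b_0(N)=b_0^{\bar R}(N/\ell N)\le\topdeg(N/\ell N)$, the last inequality holding for any nonzero finitely generated graded module. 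Thus it suffices to bound $\topdeg(N/\ell N)$ in $\bar R$, and the gain of one dimension is exactly what produces the improved index and the extra unit.

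The key algebraic point is that, for general $\ell$, one has $\ell M\cap N=\ell N$. Indeed $\tilde M=M/N=M/\HH^0_\m(M)$ has no $\m$-torsion, so $\m\notin\operatorname{Ass}(\tilde M)$ and a general $\ell$ is a nonzerodivisor on $\tilde M$; then $\ell m\in N$ forces $\ell\bar m=0$ in $\tilde M$, hence $\bar m=0$, i.e.\ $m\in N$. Feeding this into the left-exact functor $\HH^0_\m(-)$ applied to $0\to\ell M\to M\to\bar M\to 0$, and using $\HH^0_\m(\ell M)=\ell M\cap N=\ell N$, yields an injection $N/\ell N\hookrightarrow\HH^0_{\bar\m}(\bar M)$, whence $\topdeg(N/\ell N)\le\topdeg\HH^0_{\bar\m}(\bar M)$. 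I would then apply Corollary~\ref{cor-to-localCoh-1} to $\bar C_\bullet$ over the $(d-1)$-dimensional polynomial ring $\bar R$, where the role of the top module is played by $\bar C_{d-1}$ and $a(\bar R)=-(d-1)$; since $b_0(\bar C_{d-1})=b_0(C_{d-1})$, this gives $\topdeg\HH^0_{\bar\m}(\bar M)\le b_0(C_{d-1})-(d-1)$. Chaining the inequalities yields $b_0(\HH^0_\m(M))\le b_0(C_{d-1})-d+1$.

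What remains, and what I expect to be the main obstacle, is to check that a single general $\ell$ makes the hypotheses of Corollary~\ref{cor-to-localCoh-1} hold for $\bar C_\bullet$ over $\bar R$. For the depth condition one needs $\depth_{\bar R}\bar C_j\ge j+1$ for $0\le j\le d-2$; in this range the given $\min\{d,j+2\}\le\depth C_j$ equals $j+2$, and since each such $C_j$ has positive depth, $\ell$ can be chosen regular on all of them, so $\depth_{\bar R}\bar C_j=\depth_R C_j-1\ge j+1$. For the dimension condition $\dim\HH_j(\bar C_\bullet)\le j$, the cases $j\ge d-1$ are automatic because $\dim\bar R=d-1$, while for $1\le j\le d-2$ one uses the long exact homology sequence of $0\to C_\bullet(-1)\xrightarrow{\ell}C_\bullet\to\bar C_\bullet\to 0$ (valid in this range, where $\ell$ is regular on $C_{j-1},C_j,C_{j+1}$): it presents $\HH_j(\bar C_\bullet)$ as an extension of $0:_{\HH_{j-1}(C_\bullet)}\ell$ by $\HH_j(C_\bullet)/\ell\,\HH_j(C_\bullet)$, the former of finite length for general $\ell$ and the latter of dimension $\le\dim\HH_j(C_\bullet)\le j$. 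The care needed is to pick $\ell$ simultaneously outside the finitely many non-maximal associated primes of $\tilde M$, of the relevant $C_j$, and of the relevant $\HH_j(C_\bullet)$, which is possible since we may assume $k$ infinite (extending the base field if necessary, as this changes none of the invariants in the statement).
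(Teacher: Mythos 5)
This paper never proves Theorem~\ref{SJD-cor2}: as the surrounding text states, the result is imported verbatim from the companion work \cite{KPU-DBLC}, so there is no internal proof to compare yours against. Judged on its own merits, your argument is correct, and it is genuinely useful because it derives the theorem from the one ingredient the paper does reproduce, namely Corollary~\ref{cor-to-localCoh-1}, by a generic hyperplane section. The load-bearing steps all check out: writing $N=\HH^0_\m(M)$, the identity $\ell M\cap N=\ell N$ for general $\ell$ (valid since $\m\notin\operatorname{Ass}(M/N)$) combined with left exactness of $\HH^0_\m(-)$ gives the injection $N/\ell N\hookrightarrow \HH^0_{\bar\m}(\bar M)$; the equality $b_0(N)=b_0^{\bar R}(N/\ell N)\le \topdeg (N/\ell N)$ converts the generation bound into a concentration bound; and over $\bar R=R/\ell R$ one has $a(\bar R)=-(d-1)$ and $b_0(\bar C_{d-1})=b_0(C_{d-1})$, so Corollary~\ref{cor-to-localCoh-1} yields exactly $b_0(N)\le b_0(C_{d-1})-d+1$. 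Your verification of the hypotheses over $\bar R$ is also sound: the hypothesis $\min\{d,j+2\}\le\depth C_j$ is precisely calibrated to survive one hyperplane section, giving $\depth_{\bar R}\bar C_j\ge j+1$ for $0\le j\le d-2$, and the extension $0\to \HH_j(C_\bullet)/\ell\HH_j(C_\bullet)\to \HH_j(\bar C_\bullet)\to 0:_{\HH_{j-1}(C_\bullet)}\ell\to 0$ settles the dimension condition for $1\le j\le d-2$, the remaining indices being automatic over a $(d-1)$-dimensional ring.

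Two points should be made airtight in a write-up. First, since nothing is assumed about $\depth C_j$ for $j\ge d$, the sequence of complexes $0\to C_\bullet(-1)\stackrel{\ell}{\lto} C_\bullet\to\bar C_\bullet\to 0$ can fail to be exact in high homological degrees; the cleanest fix for your parenthetical ``valid in this range'' is to replace $C_\bullet$ by its brutal truncation at homological degree $d-1$ (this changes no homology in degrees $\le d-2$, before or after reduction modulo $\ell$, and $\ell$ is regular on every module of the truncated complex). Second, the degenerate case $\bar C_{d-1}=0$ should be mentioned: Corollary~\ref{cor-to-localCoh-1}.a then gives $\HH^0_{\bar\m}(\bar M)=0$, hence $N=\ell N$, hence $N=0$ by Nakayama, and the asserted inequality holds trivially. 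Finally, it is worth noting that your argument never uses the hypothesis $\dim \HH_{d-1}(C_\bullet)\le d-1$ (only the range $1\le j\le d-2$ enters), so it in fact proves a formally slightly stronger statement than the one quoted from \cite{KPU-DBLC}.
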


\smallskip

\begin{remark}\label{rmkAug25} The hypotheses of   Theorem~\ref{SJD-cor2} about depth and dimension are satisfied if the modules $C_j$ are free and condition (\ref{easy-b}) holds.
\end{remark}

\bigskip

\section{Bounding the degrees of defining equations of Rees rings}\label{RealAppToBA}

In this section we apply the local cohomology techniques of \cite{KPU-DBLC}
to bound the degrees of the defining equations of Rees rings.
This style of argument was inspired by work of Eisenbud, Huneke, and Ulrich; see \cite{EHU}.

To apply the results of  \cite{KPU-DBLC}, most notably Corollary~\ref{cor-to-localCoh-1} and Theorem~\ref{SJD-cor2} of the present paper,  we need `approximate resolutions' $C_{\bullet}$ of the symmetric powers of $I(\delta)$, for an ideal $I$ as in Data \ref{data1} that satisfies $G_d$. 
If $I$ is perfect of height two,  we take `$C_{\bullet}$' to be a homogeneous strand of a Koszul complex. The fact
that this choice of `$C_{\bullet}$' satisfies the hypotheses of  Corollary~\ref{cor-to-localCoh-1} and Theorem~\ref{SJD-cor2} 
is shown in the proof of Theorem~\ref{App-to-BA-1}.a. If $I$ is 
Gorenstein of height three, we take `$C_{\bullet}$' to be one of the complexes $\mathcal D^q_{\bullet}(\varphi)$ from \cite{KU-Fam}. In this paper these complexes are introduced  in (\ref{D-twists}); they are shown to 
satisfy the hypotheses of  Corollary~\ref{cor-to-localCoh-1} and Theorem~\ref{SJD-cor2}
 in Lemma~\ref{Acyclic}.

In Theorem~\ref{App-to-BA-1} we carefully record the degrees of the entries in a minimal homogeneous presentation matrix for $I$; but we do not insist that these entries be linear. For the ideal $\mathcal A$ of the symmetric algebra defined in \ref{LT-FT}, we give an explicit exponent $N$ so that $\mathfrak m^N \cdot \mathcal A=0$ and we obtain an explicit bound  for the largest $x$-degree $p$ so that $\mathcal A_{(p,*)}=\bigoplus_{q} \A_{(p,q)}$ contains a minimal bi-homogeneous generator of $\mathcal A$.   

Assertions (\ref{App-1-a-i}) and (\ref{App-1-a-ii})
 of Theorem~\ref{App-to-BA-1}, about perfect height two ideals,  are significant generalizations of Corollaries 2.5(2) and 2.16(1), respectively, in
\cite{KPU-BSSA}, where the same results are obtained for $d=2$ and $n=3$. The starting point for \cite{KPU-BSSA} is the perfect pairing of  Jouanolou \cite{Jo96,Jo97}, which exists because the symmetric algebra of a three generated height two perfect ideal is a complete intersection. With the present hypotheses, the symmetric algebra is only a complete intersection on the punctured spectrum of $R$ and there is no perfect pairing analogous to the pairing of Jouanolou.

Theorem~\ref{App-to-BA-1} is particularly interesting when the ideal $I$ is linearly presented.  Indeed, assertions (\ref{App-1-a-ii}) and (\ref{App-1-b-ii}) say that $I$ is of fiber type. 
For  height two perfect ideals this was already observed in \cite{MU}. For height three Gorenstein ideals, this is a new result and a main ingredient in the proof of Theorem~\ref{MainT}. The application of Theorem~\ref{App-to-BA-1}  to the situation where $I$ is a linearly presented height three Gorenstein ideal is recorded as Corollary~\ref{fiber type}.

\begin{theorem}\label{App-to-BA-1}Adopt Data~{\rm\ref{data1}} and recall the bi-homogeneous ideal $\A$ of the symmetric algebra defined in \ref{LT-FT}. Assume further that 
$I$ satisfies the condition $G_d$. 
\begin{enumerate}[\rm(a)]
\item \label{App-1-a}Assume that the ideal $I$ is perfect of height two and the column degrees of a homogeneous Hilbert-Burch matrix minimally presenting $I$ are $\epsilon_1\ge \epsilon_2\ge \dots\ge \epsilon_{n-1}$. Then the following statements hold when $d \leq n-1$.
\begin{enumerate}[\rm(i)]
\item\label{App-1-a-i} 
If \,$\sum_{i=1}^d (\epsilon_i-1)<p$, then $\mathcal A_{(p,*)}=0$. In particular,  $$\mathfrak m^{1+\sum_{i=1}^d (\epsilon_i-1)}\mathcal A=0.$$  
\item\label{App-1-a-ii} If  $\mathcal A_{(p,*)}$ contains a minimal bi-homogeneous generator of $\mathcal A$, then $p\le \sum_{i=1}^{d-1}(\epsilon_i-1)$.
\end{enumerate}
\item\label{App-1-b} Assume that  the ideal $I$ is Gorenstein of height three  and every entry 
 of a homogeneous alternating matrix minimally presenting $I$ has degree $D$.  Then the following statements hold.
\begin{enumerate}[\rm(i)] 
\item\label{App-1-b-i} If 
$$\begin{cases}
d(D-1)<p&\text{when $d$ is odd }\\
(d-1)(D-1)+\frac{n-d+1}2D-1<p&\text{when $d$ is even\,,}
\end{cases}$$
 then $\mathcal A_{(p,*)}=0$. 
In particular, 
$$\begin{cases}
\text{$\mathfrak m^{d(D-1)+1}\mathcal A=0$}&\text{if $d$ is odd}\\ 
\text{$\mathfrak m^{(d-1)(D-1)+\frac{n-d+1}2D}\mathcal A=0$}&\text{if $d$ is even\,.}\end{cases}$$

\item\label{App-1-b-ii} If  $\mathcal A_{(p,*)}$ contains a minimal bi-homogeneous generator of $\mathcal A$, then $p\le (d-1)(D-1) \,$.
\end{enumerate}
\end{enumerate}
\end{theorem}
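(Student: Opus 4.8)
The plan is to reduce both parts to the degree bounds of Section~\ref{from-DBLC}, applied separately in each $T$-degree. Since $I$ satisfies $G_d$, (\ref{apr30'}) gives $\A=\HH^0_\m(\Sym(I(\delta)))$, and as local cohomology commutes with the decomposition $\Sym(I(\delta))=\bigoplus_q\Sym_q(I(\delta))$ by $T$-degree we obtain
$$\A=\bigoplus_q\HH^0_\m(\Sym_q(I(\delta)))$$
as bi-graded $R$-modules. Because $R$ preserves $T$-degree, the bi-homogeneous minimal generators of $\A$ sitting in $T$-degree $q$ are exactly the minimal $R$-generators of $\HH^0_\m(\Sym_q(I(\delta)))$; thus it suffices, for every $q$, to bound the top $x$-degree and the top $x$-generator degree of this $R$-module. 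For each $q$ I would produce an approximate $R$-free resolution $C_\bullet$ of $\Sym_q(I(\delta))$ and invoke Corollary~\ref{cor-to-localCoh-1} and Theorem~\ref{SJD-cor2}, using $a(R)=-d$, to get
$$\topdeg\HH^0_\m(\Sym_q(I(\delta)))\le b_0(C_d)-d\qquad\text{and}\qquad b_0(\HH^0_\m(\Sym_q(I(\delta))))\le b_0(C_{d-1})-d+1.$$
The first inequality produces a threshold $N$ with $\A_{(p,*)}=0$ for $p>N$; since $\Sym(I(\delta))$ is non-negatively $x$-graded and $\A$ is $\m$-torsion, any $f\in\m^{N+1}$ carries a homogeneous element of $\A$ of $x$-degree $\le N$ into $\A_{(>N,*)}=0$, so $\m^{N+1}\A=0$, which is how I would obtain the ``in particular'' annihilator statements.

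For part (\ref{App-1-a}) I would take $C_\bullet$ to be the $T$-degree-$q$ strand of the Koszul complex on the entries $\gamma_1,\dots,\gamma_{n-1}$ of $\underline T\cdot\varphi$, the generators of $\L$; here $\gamma_i$ has bi-degree $(\epsilon_i,1)$, so $\Sym(I(\delta))=S/(\gamma_1,\dots,\gamma_{n-1})$ and the strand has $\HH_0(C_\bullet)=\Sym_q(I(\delta))$. Its $p$-th term is
$$C_p=\bigl[\wedge^{p}S^{\,n-1}\bigr]_{(*,q)}=\bigoplus_{i_1<\cdots<i_p}R^{\binom{n+q-p-1}{n-1}}\bigl(-\epsilon_{i_1}-\cdots-\epsilon_{i_p}\bigr),$$
a free $R$-module with $b_0(C_p)=\epsilon_1+\cdots+\epsilon_p$, the sum of the $p$ largest column degrees. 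Being free, each $C_p$ is maximal Cohen-Macaulay, so the depth hypotheses hold automatically (Remarks~\ref{rmk-3} and \ref{rmkAug25}); and because $G_d$ makes $I$ of linear type on the punctured spectrum (\ref{Intro-to-BA}), localizing at any non-maximal prime of $R$ identifies $\Sym(I(\delta))$ with the Rees ring and forces $\gamma_1,\dots,\gamma_{n-1}$ to be a regular sequence of the expected length $n-1$, so the positive Koszul homology is supported at $\m$ and $\dim\HH_j(C_\bullet)\le 0\le j$. Substituting $b_0(C_d)=\epsilon_1+\cdots+\epsilon_d$ and $b_0(C_{d-1})=\epsilon_1+\cdots+\epsilon_{d-1}$ into the two inequalities yields exactly the bounds $\sum_{i=1}^d(\epsilon_i-1)$ of (\ref{App-1-a-i}) and $\sum_{i=1}^{d-1}(\epsilon_i-1)$ of (\ref{App-1-a-ii}); the hypothesis $d\le n-1$ is what guarantees $C_d,C_{d-1}\neq 0$.

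For part (\ref{App-1-b}) the identical scheme applies with $C_\bullet$ replaced by the Kustin-Ulrich complex $\D^q_\bullet(\varphi)$ of (\ref{D-twists}), which satisfies $\HH_0(\D^q_\bullet(\varphi))=\Sym_q(I(\delta))$ and whose depth and dimension hypotheses are precisely what Lemma~\ref{Acyclic} supplies. The only new work is the computation of the generator degrees $b_0(\D^q_{d-1})$ and $b_0(\D^q_d)$. Here the parity of $d$ intervenes: the alternating structure of $\varphi$ builds $\D^q_\bullet$ from a Buchsbaum-Rim/Koszul-type portion together with a portion governed by Pfaffians of $\varphi$, and only when $d$ is even does an additional Pfaffian-type term surface in $b_0(\D^q_d)$, producing the extra summand $\tfrac{n-d+1}{2}D$ in the concentration threshold; the generation degree $b_0(\D^q_{d-1})$ behaves uniformly and yields the single bound $(d-1)(D-1)$ of (\ref{App-1-b-ii}). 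Substituting these values into the two inequalities of the first paragraph gives the thresholds claimed in (\ref{App-1-b-i}) and (\ref{App-1-b-ii}). I expect this degree computation to be the main obstacle: unlike the soft homological arguments above it relies on the explicit internal twists of $\D^q_\bullet(\varphi)$ from \cite{KU-Fam}, and the even case in particular demands careful tracking of the Pfaffian contribution that is ultimately responsible for the failure of the expected form.
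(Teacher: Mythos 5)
Your route is the same as the paper's: identify $\A$ with $\HH^0_\m(\Sym(I(\delta)))$ via (\ref{apr30'}), work one $T$-degree at a time using $\A_{(*,q)}=\HH^0_\m(\Sym_q(I(\delta)))$, and feed the $(\star,q)$-strand of the Koszul complex on the entries of $\underline{T}\cdot\varphi$ (part (\ref{App-1-a})), respectively the complex $\D^q_\bullet(\varphi)$ (part (\ref{App-1-b})), into Corollary~\ref{cor-to-localCoh-1} and Theorem~\ref{SJD-cor2}. Your part (\ref{App-1-a}) is complete and agrees with the paper's proof: the paper verifies acyclicity on the punctured spectrum through Observation~\ref{triv}, and your argument that linear type identifies $\Sym(I(\delta))_{\p}$ with the Rees ring and so forces the $n-1$ Koszul forms to be a regular sequence is the same dimension count. (Your side remark that $d\le n-1$ guarantees $C^q_d\neq 0$ is not quite right for $q<d$, but this is harmless: when $C^q_d=0$, Corollary~\ref{cor-to-localCoh-1}.\ref{cor-to-localCoh-1-a} gives vanishing outright.) The deduction of the annihilator statements from the vanishing thresholds is also fine.

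Part (\ref{App-1-b}), however, has a genuine gap, sitting exactly where you flagged it. Your claim that $\maxgendeg(\D^q_{d-1})$ ``behaves uniformly'' is false if read over all $q$: when $d$ is odd and $q=d-2$, the module $\D^q_{d-1}$ is not $K_{q-d+1,d-1}=R(-(d-1)D)^{\beta^q_{d-1}}$ but the Pfaffian column $Q_{d-2}=R\left(-(d-2)D-\frac{n-d+2}{2}D\right)$, whose generator degree exceeds $(d-1)D$ whenever $d<n$; fed into Theorem~\ref{SJD-cor2}, this would give a bound strictly worse than $(d-1)(D-1)$, ruining (\ref{App-1-b-ii}) precisely in the odd case. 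The missing idea is the paper's observation (\ref{if}): if $\D^q_r=Q_q$, then $\D^q_{r+1}=0$. This yields the dichotomy on which all of part (\ref{App-1-b}) rests: for each $q$, either $\D^q_d=0$, in which case Corollary~\ref{cor-to-localCoh-1}.\ref{cor-to-localCoh-1-a} gives $\HH^0_\m(\Sym_q(I(\delta)))=0$ and there is nothing to bound, or else $\D^q_d\neq 0$, which forces $\D^q_{d-1}=K_{q-d+1,d-1}$ and hence the uniform generation bound $(d-1)D-d+1=(d-1)(D-1)$. The same restriction governs (\ref{App-1-b-i}): a nonzero $\D^q_d$ is either $K_{q-d,d}=R(-dD)^{\beta^q_d}$, or --- only when $q=d-1$ is odd, that is, when $d$ is even --- the column $Q_{d-1}=R\left(-(d-1)D-\frac{n-d+1}{2}D\right)$, and the even-$d$ threshold is the larger of the two resulting degrees. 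Given (\ref{D-twists}) this case analysis is short, but without it the bounds you assert do not follow from the two inequalities you set up.
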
 

\vspace{0.0cm}

\begin{Remarks}\begin{enumerate}[\rm(a)]\item We may safely assume that $d \leq n-1$ in Theorem \ref{App-to-BA-1} because otherwise $\mathcal A = 0 \, ;$
see (\ref{Aug232016}). \item The assumptions on the degrees of the entries in a presentation matrix do not impose any restriction: In (a), the inequalities  $\epsilon_1\ge \epsilon_2\ge \dots\ge \epsilon_{n-1}$ can be achieved by a permutation of the columns. In (b), all entries necessarily have the same degree, as can be seen from the symmetry of the minimal homogeneous $R$-resolution of $R/I$.  
\item Results similar to, but more complicated than, Theorem~\ref{App-to-BA-1} can be obtained without the assumption that the ideal $I$ is generated by forms of the same degree.
\item The degree $\delta$ of the forms $g_1, \ldots, g_n$ in Data~\ref{data1} is $\sum_{i=1}^{n-1}\epsilon _i$ in the setting of Theorem \ref{App-to-BA-1}.\ref{App-1-a} and is
$\frac{n-1}{2} \, D\, $ in Theorem \ref{App-to-BA-1}.\ref{App-1-b}; see \cite{B} and \cite{BE}, respectively.

\end{enumerate}\end{Remarks}

\vspace{0.0cm}

\begin{proof} 
We have seen in (\ref{apr30'}) that the ideal 
$\mathcal A$ of $\Sym(I(\delta))$ is equal to the local cohomology module $\HH_{\mathfrak m}^0(\Sym(I(\delta)))$.
We apply 
Corollary~\ref{cor-to-localCoh-1} and Theorem~\ref{SJD-cor2} to learn about the $R$-modules
\begin{equation}\label{the point}\HH_\mathfrak m^0(\Sym_q(I(\delta)))= \bigoplus_{p}\mathcal A_{(p,q)}=\mathcal A_{(*,q)}.\end{equation}

\medskip\noindent (\ref{App-1-a}) 
Let 
$\varphi$ be a 
minimal 
homogeneous 
Hilbert-Burch matrix for $I$. In other words, $\varphi$ is 
an 
$n\times (n-1)$ matrix 
with homogeneous entries from $R$ such that
$$0\lto F_1 
\stackrel{\varphi}\longrightarrow F_0
\stackrel{\underline g}\longrightarrow
I(\delta)\lto 0$$
is a homogeneous 
exact sequence of $R$-modules, where $\underline g$ is the row vector $[g_1,\dots,g_n]$ of \ref{def1}, $$F_0=R^n \quad \text{and} \quad F_1=\bigoplus\limits_{i=1}^{n-1} R(-\epsilon_i).$$ 
It induces a bi-homogeneous presentation of the symmetric algebra
\[(\Sym(F_0) \otimes_R F_1)(0,-1) \stackrel{\underline {\ell}}\lto\Sym(F_0)\lto \Sym(I(\delta))\lto 0\, ,
\]
where $\Sym(F_0)$ is the standard bi-graded polynomial ring $S$ of \ref{grading} and $\underline{\ell}$ is the row vector $[T_1,\dots, T_n]~\cdot~\varphi$ of \ref{def1}. We consider the Koszul complex of $\underline{\ell}$,
\[\mathbb K_{\bullet}= \bigwedge^{\bullet}_S( (S\otimes_R F_1)(0,-1))
= S\otimes_R \bigwedge^{\bullet}_R (F_1(0,-1))\, .
\]
It is a bi-homogeneous complex of free $S$-modules with $\HH_0(\mathbb K_{\bullet})$ equal to $\Sym(I(\delta))$. 

The ideal  $I$ is perfect of height two and  satisfies $G_d$ (see \ref{Gd-def}). It follows that 
the sequence $\underline{\ell}=\ell_1, \ldots, \ell_{n-1}$ is a regular sequence 
locally on the punctured spectrum of $R$ (see Observation~\ref{triv});  and therefore the Koszul complex $\mathbb K_{\bullet}$ is 
 acyclic on the punctured spectrum of $R$. 
In particular, for each non-negative integer $q$,  the component of degree $(\star, q)$ of $\mathbb K_{\bullet}$ is a homogeneous complex of graded free $R$-modules,  has zeroth homology equal to $\Sym_q(I(
\delta))$, and is exact on the punctured spectrum of $R$. 
This component  looks like
$$C_\bullet^q:\quad 0\lto C_{n-1}^q\lto C_{n-2}^q\lto \ \ldots \ \lto C_{1}^q\lto C_{0}^q \lto 0\,,$$ with 
$$C_r^q\, =\, \Sym_{q-r}(F_0)\otimes_R \bigwedge^{r} F_1 \, \cong
 \bigoplus_{1\le i_1<\dots<i_{r}\le n-1} R^{b_{r}}(-\sum\limits_{j=1}^r \epsilon_{i_j})$$
 for 
 $$ b_r=\rank (\Sym_{q-r}(F_0))=\binom{q-r+n-1}{n-1}\, .$$ 
The assumption $\epsilon_1\ge \epsilon_2\ge \dots\ge \epsilon_{n-1}$ yields that if $C^q_r\neq 0$ then 
\begin{equation}\label{mgd}\maxgendeg(C^q_r)=\sum\limits_{i=1}^r \epsilon_i\quad\text{for $1\le r\le n-1$}.\end{equation}
Notice that if $C^q_r=0$ then $\maxgendeg(C^q_r)= - \infty \, .$

Apply  Corollary~\ref{cor-to-localCoh-1}  
(see also Remark~\ref{rmk-3}) to the complex $C_\bullet^q$ to conclude that  
\begin{equation}\label{zzz}[\HH^0_\mathfrak m(\Sym_q(I(\delta)))]_p=0 \quad\text {for all $p$ with $\maxgendeg(C_d^q)+a(R)<p$.}\end{equation} 
Recall that the index  $q$ in (\ref{zzz}) is  arbitrary. Apply (\ref{the point}), (\ref{mgd}), and (\ref{zzz}) to conclude that
$$\mathcal A_{(p,*)}=0 \quad\text {for all $p$ with $\sum\limits_{i=1}^d \epsilon_i-d<p$.}$$ 
Assertion (\ref{App-1-a-i}) has been established.

Apply  
Theorem~\ref{SJD-cor2}  (see also Remark~\ref{rmkAug25})
to the complex $C_\bullet^q$ 
to conclude that 
every minimal homogeneous generator $\alpha$ of $\HH_\mathfrak m^0(\Sym_q(I(\delta)))$ satisfies 
$$\deg\alpha\le \maxgendeg (C_{d-1}^q)-d+1.$$
Once again,  (\ref{the point}) and (\ref{mgd}) yield that 
$$\maxgendeg(\mathcal A_{(*,q)})\le \sum\limits_{i=1}^{d-1}\epsilon_i-d+1.$$
The parameter $q$ remains arbitrary. Every minimal bi-homogeneous generator of $\mathcal A$ is a minimal homogeneous generator of $\mathcal A_{(*,q)}$, for some $q$. We conclude that if $\mathcal A_{(p,q)}$ contains a minimal bi-homogeneous generator of $\mathcal A$, then $p\le \sum\limits_{i=1}^{d-1}\epsilon_i-d+1$, and this establishes assertion 
(\ref{App-1-a-ii}). 

\medskip\noindent(\ref{App-1-b}) 
Let $\varphi$ be an  $n\times n$ homogeneous alternating matrix which  minimally presents $I$, and, for each non-negative integer $q$, let 
$$\mathcal D^q_{\bullet}(\varphi):
\quad 0\lto \mathcal D^q_{n-1}\lto \mathcal D^q_{n-2}\lto \ \dots \ \lto \mathcal D^q_{1}\lto \mathcal D^q_{0}\lto 0$$ be the complex from \cite[Def.~2.15 and Fig.~4.7]{KU-Fam} that is associated to the alternating matrix $\varphi$. The zeroth homology of $\mathcal D^q_{\bullet}(\varphi)$ is $\Sym_q(I(\delta))$; see  \cite[4.13.b]{KU-Fam}.
The hypothesis that every entry of $\varphi$ has degree $D$ yields that
\begin{equation}\label{D-twists}\mathcal D^q_r=\begin{cases} K_{q-r,r}=R(-rD)^{\beta^q_r}&\text{if $r\le \min\{q, n-1\}$}\\
Q_q=R(-(r-1)D-\frac{(n-r+1)}2D)&\text{if $r=q+1$, $q$ is odd, and $r\le n-1$}\\
0&\text{if $r=q+1$ and $q$ is even}\\
0&\text{if $\min\{q+2,n\}\le r$}\\
\end{cases}\end{equation}for some non-zero Betti numbers $\beta^q_r$. Indeed, the graded $R$-module $\Sym_q(I(\delta))$ is generated in degree zero; moreover, each map $K_{a,b}\to K_{a+1,b-1}$ is linear in the entries of $\varphi$ (see \cite[2.15.d]{KU-Fam}) and the map ${Q_q\to K_{0,q}}$ may be represented by a column vector whose entries consist of the Pfaffians of the principal ${(n-r+1)\times (n-r+1)}$ alternating submatrices of $\varphi$ (see \cite[2.15.c and 2.15.f]{KU-Fam}).

 Notice from (\ref{D-twists}) that 
\begin{equation}\label{if} \text{if $\mathcal D^q_r=Q_q$, then $\mathcal D^q_{r+1}=0 \, .$}\end{equation} 
Notice also that 
\begin{align}
\max\limits_{\{q\mid \mathcal D_d^q\neq 0\}}\maxgendeg (\mathcal D_d^q)&
\leq \begin{cases}
dD&\text{if $d$ is odd}\\
(d-1)D+\frac{n-d+1}2D&\text{if $d$ is even\,,}\label{5.7.5}
\end{cases}
\\ 
\max\limits_{\{q\mid \mathcal D_d^q\neq 0\}}\maxgendeg (\mathcal D_{d-1}^q)& \leq (d-1)D \, ;\label{5.7.6} 
\end{align}
one uses (\ref{if}) in order to see (\ref{5.7.6}) because the condition $\mathcal D_d^q\neq 0$ forces $\mathcal D_{d-1}^q=K_{q-d+1,d-1}$.

By hypothesis the ideal $I$ satisfies the condition $G_d$; so $I$ satisfies the hypothesis (\ref{WG}) of Lemma~\ref{Acyclic} for $s=d-1$; and therefore, the complex $(\mathcal D^q_{\bullet}(\varphi))_{\mathfrak p}$ is acyclic for every $\mathfrak p\in \operatorname{Spec}(R) $ with $\dim R_{\mathfrak p}\le d-1$ and every non-negative integer $q$. The hypotheses of 
Corollary~\ref{cor-to-localCoh-1} and Theorem~\ref{SJD-cor2} 
are satisfied by the complexes $\mathcal D^q_{\bullet}(\varphi)$. 
We deduce that 
either 
$$\text{$\mathcal D^q_d$ and $\HH^0_\mathfrak m(\Sym_q(I(\delta)))$ are both zero;}$$
or else $\mathcal D^q_d\neq 0$,  
$$[\HH^0_\mathfrak m(\Sym_q(I(\delta)))]_{p}= 0\ \text{ \ for all $p$ with $\,\maxgendeg (\mathcal D^q_d)+a(R)< p $},$$
and every 
minimal homogeneous generator $\alpha$ of $\HH^0_\mathfrak m(\Sym_q(I(\delta)))$ satisfies 
$$\deg \alpha \le \maxgendeg(\mathcal D^q_{d-1})-d+1.$$
In the case $\mathcal D^q_d\neq 0$ we apply (\ref{the point}), (\ref{5.7.5}), and (\ref{5.7.6}) to conclude that
$$\mathcal A_{(p,q)}=0\quad \text{ for } \begin{cases} dD-d<p &\text{if $d$ is odd}\\
(d-1)D+\frac{n-d+1}2D-d< p &\text{if $d$ is even\,,}\end{cases}$$
and every 
minimal homogeneous generator $\alpha$ of $\mathcal A_{(*,q)}$ satisfies 
$$\deg \alpha \le (d-1)D-d+1.$$
Keep in mind that if $\alpha\in \mathcal A_{(p,q)}$ is a minimal bi-homogeneous generator of $\mathcal A$, then $\alpha$ is also a minimal homogeneous generator of $\mathcal A_{(*,q)}$. Assertions (\ref{App-1-b-i}) and (\ref{App-1-b-ii}) have been established. 
\end{proof}

\vspace{0.0cm}

Two small facts were used in the proof of Theorem~\ref{App-to-BA-1} to guarantee  that the complexes $C_{\bullet}^q$ and $\mathcal D^q_{\bullet}(\varphi)$ satisfy the hypotheses of Corollary~\ref{cor-to-localCoh-1} and Theorem~\ref{SJD-cor2}. We prove these facts now.
\begin{observation}\label{triv}
Let $R$ be a 
Cohen-Macaulay local ring and 
$I$ be an ideal of positive height. 
Assume that $\mu(I_{\mathfrak p})\le \operatorname{dim}  R_{\mathfrak p}+1
\mbox{\ for each prime ideal \ } \mathfrak p \in V(I)$. If
$\Sym(I) \cong P/J$ is any homogeneous presentation, where $P$ is a standard graded polynomial ring over $R$ in $n$ variables and $J$ is a homogeneous ideal of $P$, then $J$ has height $n-1$. 
\end{observation}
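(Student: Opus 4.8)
The plan is to compute the Krull dimension of $\Sym(I)$ first and then read off the height of $J$ from the Cohen--Macaulayness of $P$. Since $\Sym(I)\cong P/J$ with $P=R[T_1,\dots,T_n]$, the key numerical identity I am aiming for is $\dim\Sym(I)=d+1$, where $d=\dim R$; once this is in hand the height computation is formal.

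For the dimension, I would invoke the dimension formula of Huneke and Rossi for symmetric algebras, which gives
$$\dim\Sym(I)=\max_{\mathfrak p\in\Spec R}\{\dim R/\mathfrak p+\mu(I_{\mathfrak p})\}.$$
I then bound each term. If $\mathfrak p\notin V(I)$ then $I_{\mathfrak p}=R_{\mathfrak p}$, so $\mu(I_{\mathfrak p})=1$; choosing a minimal prime $\mathfrak q$ of $R$ with $\dim R/\mathfrak q=d$ (which exists and does not contain $I$ because $R$ is Cohen--Macaulay, hence equidimensional, and $\htt I>0$) produces a term equal to $d+1$. If $\mathfrak p\in V(I)$, then since $R$ is Cohen--Macaulay, hence catenary and equidimensional, we have $\dim R/\mathfrak p=d-\dim R_{\mathfrak p}$, so the hypothesis $\mu(I_{\mathfrak p})\le\dim R_{\mathfrak p}+1$ bounds the term by $(d-\dim R_{\mathfrak p})+(\dim R_{\mathfrak p}+1)=d+1$. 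Hence the maximum is attained and equals $d+1$, so $\dim\Sym(I)=d+1$.

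To extract the height, I would pass to the homogeneous maximal ideal $\mathfrak M=\mathfrak m P+(T_1,\dots,T_n)$ of the $\mathbb N$-graded ring $P$ (graded by $\deg T_i=1$ and $\deg R=0$), whose degree-zero part $P_0=R$ is local. Because $J$ is a homogeneous ideal, all of its minimal primes are homogeneous and hence contained in $\mathfrak M$, so $\htt J=\htt J P_{\mathfrak M}$; likewise $\dim P/J=\dim(P/J)_{\mathfrak M}=d+1$ by the previous paragraph, using that the dimension of a graded ring with local degree-zero part is the height of its homogeneous maximal ideal. Now $P$ is Cohen--Macaulay, being a polynomial extension of the Cohen--Macaulay ring $R$, so $P_{\mathfrak M}$ is Cohen--Macaulay local of dimension $\htt\mathfrak M=d+n$. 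In a Cohen--Macaulay local ring the identity $\htt\mathfrak p+\dim(P_{\mathfrak M}/\mathfrak p)=\dim P_{\mathfrak M}$ holds for every prime $\mathfrak p$ (such rings are catenary and equidimensional), and passing to the minimal primes of an ideal gives the same identity for ideals. Applying this to $J P_{\mathfrak M}$ yields $\htt J=(d+n)-(d+1)=n-1$.

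The main obstacle is the second paragraph, establishing $\dim\Sym(I)=d+1$, since this is where the hypothesis on local numbers of generators and the Cohen--Macaulay (equidimensional) structure of $R$ are genuinely used; the passage from dimension to height is a routine consequence of catenarity. The two points that deserve a little care are confirming that the maximum in the Huneke--Rossi formula is actually attained at the value $d+1$ (via the minimal prime $\mathfrak q$) rather than merely bounded above by it, which uses $\htt I>0$, and that localizing at $\mathfrak M$ loses no minimal prime of $J$, which uses the homogeneity of $J$.
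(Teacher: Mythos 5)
Your proof is correct and takes essentially the same route as the paper's: the paper also computes $\htt J = \dim P - \dim P/J = \dim R + n - \dim \Sym(I) = n-1$, justifying the first equality by the Cohen--Macaulayness of $P$ together with the homogeneity of $J$, and the last by the Huneke--Rossi dimension formula for symmetric algebras \cite[2.6]{HR}. Your argument merely fills in the details the paper leaves implicit, namely the case analysis showing the Huneke--Rossi maximum equals $\dim R+1$ and the localization at the maximal homogeneous ideal that converts the dimension count into a height.
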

\begin{proof} 
One has
$$\htt \, J = \dim P - \dim P/J =\dim R+n-\dim \, \Sym(I) = n-1,$$
where the first equality holds because $R$ is Cohen-Macaulay local and $J$ is homogeneous and the last equality follows from the dimension formula for symmetric algebras; see \cite[2.6]{HR}. 
\end{proof}

\begin{lemma}\label{Acyclic} Let $R$ be a Cohen-Macaulay ring and $I$ be a perfect Gorenstein ideal of height three. Let $\varphi$  be an alternating  presentation matrix for $I$, $\{\mathcal D^q_{\bullet}(\varphi)\}$ be the family of complexes in \cite{KU-Fam} which is associated to $\varphi$, and $s$ be an integer. If 
\begin{equation}\label{WG}
\mu(I_{\mathfrak p})\le \operatorname{dim}  R_{\mathfrak p}+1
\mbox{\quad for each prime ideal \ } \mathfrak p \in V(I)  \mbox{\  with \ } \operatorname{dim}  R_{\mathfrak p} \le s\, ,
\end{equation} then the complex $[\D^{q}_{\bullet}(\varphi)]_{\mathfrak p}$ is acyclic for every  $\mathfrak p\in \operatorname{Spec}  (R)$  with $\operatorname{dim}  R_{\mathfrak p} \le s$ and every non-negative integer $q$. 
\end{lemma}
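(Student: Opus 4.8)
The plan is to reduce the statement to a local acyclicity question and then invoke the structural properties of the complexes $\mathcal D^q_\bullet(\varphi)$ established in \cite{KU-Fam}. First I would observe that the formation of $\mathcal D^q_\bullet(\varphi)$ commutes with localization: for $\mathfrak p \in \operatorname{Spec}(R)$ the localized complex $[\mathcal D^q_\bullet(\varphi)]_\mathfrak p$ is the complex $\mathcal D^q_\bullet(\varphi_\mathfrak p)$ attached to the alternating matrix $\varphi_\mathfrak p$, which presents $I_\mathfrak p$ over the Cohen-Macaulay local ring $R_\mathfrak p$. Thus it suffices to fix $\mathfrak p$ with $\dim R_\mathfrak p \le s$ and to prove that $\mathcal D^q_\bullet(\varphi_\mathfrak p)$ is acyclic for every $q\ge 0$.

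Next I would dispose of the primes $\mathfrak p \notin V(I)$. For such $\mathfrak p$ one has $I_\mathfrak p = R_\mathfrak p$, so $\Sym_q(I_\mathfrak p(\delta))$ is free of rank one and $I_\mathfrak p$ is trivially of linear type; in this situation the complex $\mathcal D^q_\bullet(\varphi_\mathfrak p)$ degenerates to a resolution of a free module and acyclicity is immediate. Note that for these primes the inequality $\mu(I_\mathfrak p)=1\le \dim R_\mathfrak p +1$ holds automatically, so that after this reduction the generation bound $\mu(I_\mathfrak q)\le \dim R_\mathfrak q +1$ is available for \emph{every} $\mathfrak q\in V(I)$ with $\mathfrak q\subseteq \mathfrak p$: each such $\mathfrak q$ satisfies $\dim R_\mathfrak q\le \dim R_\mathfrak p\le s$, so hypothesis (\ref{WG}) applies.

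For the remaining case $\mathfrak p\in V(I)$ I would appeal to the acyclicity criterion for the family $\{\mathcal D^q_\bullet(\varphi)\}$ proved in \cite{KU-Fam}. The modules $\mathcal D^q_r$ are free, so by the Buchsbaum--Eisenbud exactness criterion acyclicity is equivalent to the expected rank equalities---which hold generically and are recorded in \cite{KU-Fam}---together with grade bounds on the ideals of minors of the differentials; this formulation is insensitive to the length of the complex, which may well exceed $\dim R_\mathfrak p$. These grade bounds are exactly what the generation estimate $\mu(I_\mathfrak q)\le \dim R_\mathfrak q+1$ translates into, via the interpretation of $G_s$-type conditions in terms of heights of Fitting ideals recalled in \ref{Gd-def}. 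I would therefore verify that the hypotheses of the relevant theorem of \cite{KU-Fam} are met over $R_\mathfrak p$ and conclude acyclicity, uniformly in $q$.

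The delicate point---and the step I expect to be the main obstacle---is precisely the boundary primes at which $\mu(I_\mathfrak p)=\dim R_\mathfrak p+1$. There $I_\mathfrak p$ fails to be of linear type, so $\Sym_q(I_\mathfrak p(\delta))$ is genuinely larger than the ordinary power, and no naive Koszul or Eagon--Northcott argument suffices; it is exactly here that one must use the full strength of the complexes $\mathcal D^q_\bullet(\varphi)$, whose extra term $Q_q$ built from Pfaffians (see (\ref{D-twists})) accounts for the torsion. Controlling the grade of the ideals of minors at these primes, uniformly in $q$, is the heart of the matter and is the content imported from \cite{KU-Fam}.
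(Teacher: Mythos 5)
Your proposal follows essentially the same route as the paper's proof: the paper likewise localizes (noting $[\mathcal D^q_{\bullet}(\varphi)]_{\mathfrak p}=\mathcal D^q_{\bullet}(\varphi_{\mathfrak p})$), translates hypothesis (\ref{WG}) into the grade conditions (called ${\rm WMC}_1$ and ${\rm WPC}_1$ in \cite{KU-Fam}) on the Pfaffian ideals of $\varphi_{\mathfrak p}$, and then quotes the acyclicity theorem \cite[6.2]{KU-Fam}. Your separate treatment of primes outside $V(I)$ and your Buchsbaum--Eisenbud framing of the grade conditions are just more explicit renderings of the same argument, so the proposal is correct and not genuinely different.
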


\begin{proof}   Let $\mathfrak p\in \operatorname{Spec}  (R)$  with $\operatorname{dim}  R_{\mathfrak p} \le s$. Notice that $\D^{q}_{\bullet}(\varphi)_{\mathfrak p}=\D^{q}_{\bullet}(\varphi_{\mathfrak p})$. This complex has length at most $n-1$, where $n$ is the size of the matrix $\varphi_{\mathfrak p}$ according to  \cite[4.3.d]{KU-Fam}; see also (\ref{D-twists}).  Moreover, by  assumption  (\ref{WG}), the matrix $\varphi_{\mathfrak p}$ satisfies  condition ${\rm WMC}_1$, and therefore ${\rm WPC}_1$, see \cite[5.8, 5.9, 5.11.a]{KU-Fam}. Now  
\cite[6.2]{KU-Fam} shows that the complex $\D^{q}_{\bullet}(\varphi_{\mathfrak p})$ is acyclic.
\end{proof}

Theorem~\ref{App-to-BA-1} is particularly interesting when $I$ is a linearly presented height three Gorenstein ideal. One quickly deduces that such ideals are of fiber type; see \ref{LT-FT} for the definition. This is a new result and a main ingredient in the proof of Theorem~\ref{MainT}.

\begin{corollary}\label{fiber type} Adopt the setting of Theorem~{\rm\ref{App-to-BA-1}.\ref{App-1-b}} with $D=1$. Then the ideal $I$ is of fiber type. Furthermore, $\mathcal A$ is annihilated by $\mathfrak m$ when $d$ is odd and by $\mathfrak m^{\frac{n-d+1}2}$ when $d$ is even.\end{corollary}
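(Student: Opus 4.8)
The plan is to specialize Theorem~\ref{App-to-BA-1}.\ref{App-1-b} to the case $D=1$ and then translate the resulting degree bounds into the fiber-type condition via the characterization in \ref{LT-FT}. Setting $D=1$ in assertion (\ref{App-1-b-ii}) gives that every $x$-degree $p$ carrying a minimal bi-homogeneous generator of $\mathcal A$ satisfies $p\le(d-1)(D-1)=0$; since $\A$ is a submodule of the symmetric algebra and $p$ is a non-negative integer, this forces $p=0$. Hence all minimal bi-homogeneous generators of $\mathcal A$ lie in bi-degrees $(0,\star)$, which is precisely the second bullet in the list of equivalent conditions characterizing fiber type in \ref{LT-FT}. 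I would state this implication explicitly and cite that list for the equivalence.

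For the annihilator statement I would invoke assertion (\ref{App-1-b-i}) with $D=1$. When $d$ is odd, the displayed conclusion reads $\mathfrak m^{d(D-1)+1}\mathcal A=\mathfrak m^{1}\mathcal A=0$, so $\mathcal A$ is annihilated by $\mathfrak m$. When $d$ is even, it reads $\mathfrak m^{(d-1)(D-1)+\frac{n-d+1}{2}D}\mathcal A=\mathfrak m^{\frac{n-d+1}{2}}\mathcal A=0$, giving the stated exponent. These are immediate substitutions, so the corollary follows by plugging $D=1$ into the two parts of the parent theorem; no new argument is required beyond reading off the specialized formulas.

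The only point deserving care is the passage from ``no minimal generator of $\mathcal A$ sits in positive $x$-degree'' to ``$\mathcal A$ is generated in bi-degrees $(0,\star)$.'' This is exactly the content of the bullet-list equivalence in \ref{LT-FT}, but one should note that a priori $\mathcal A$ could be nonzero; the argument does not claim $\A=0$, only that its minimal generators concentrate in $x$-degree $0$. Since the hypotheses of Theorem~\ref{App-to-BA-1} (namely $I$ Gorenstein of height three satisfying $G_d$, which by (\ref{apr30'}) guarantees $\mathcal A=\HH^0_{\m}(\Sym(I(\delta)))$) are in force with $D=1$, all the degree bounds apply verbatim, and the fiber-type conclusion is valid whether or not $\mathcal A$ vanishes. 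I expect no genuine obstacle here: the entire proof is a specialization of the already-established theorem, and the main (very mild) subtlety is simply confirming that $p\le 0$ together with non-negativity of $p$ pins the generators to $x$-degree exactly zero.
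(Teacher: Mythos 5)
Your proof is correct and follows essentially the same route as the paper's own argument: specialize Theorem~\ref{App-to-BA-1}.\ref{App-1-b-ii} with $D=1$ to conclude $\mathcal A$ is generated in bi-degrees $(0,\star)$ (hence fiber type via \ref{LT-FT}), and read the annihilator exponents directly off Theorem~\ref{App-to-BA-1}.\ref{App-1-b-i}. Your extra remark that $p\le 0$ plus non-negativity pins the generators to $x$-degree exactly zero is a fine (if implicit in the paper) clarification.
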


\begin{proof}
Theorem~\ref{App-to-BA-1}.\ref{App-1-b-ii} shows that the ideal $\mathcal A$ of $\Sym(I(\delta))$ is generated by $\mathcal A_{(0,\star)}$\,, which means that $I$ is of fiber type; see \ref{LT-FT}. 
The assertion about which power of $\mathfrak m$ annihilates $\mathcal A$ is explicitly stated in Theorem~\ref{App-to-BA-1}.bi.  
\end{proof}

\bigskip
\section{The defining ideal of Rees rings up to radical}\label{uptoradical}

In this section we mainly work under the assumption that the ideal $I$ of Data~\ref{data1} is of linear type on the punctured spectrum of $R$ and that a sufficiently high symmetric power $\Sym_t(I(\delta))$ has an approximate free resolution that is linear for the first $d$ steps. With these hypotheses alone we prove, somewhat surprisingly, that the defining ideal $\J$ of the Rees ring has the expected form, up to radical; see \ref{def1} for the definition of expected form. We also describe the defining ideal $I(X)$ of the variety $X$ up to radical and we deduce that $I_d(B')$ 
has maximal possible height, where $B'$ is the Jacobian dual of the linear part of a minimal homogeneous presentation matrix of $I$; see \ref{def1} for the definition of Jacobian dual. This is done in Theorem~\ref{correct grade}. 
The computation of the height of $I_d(B')$ is relevant for the proof of Theorem~\ref{MainT} because it ensures that the complexes constructed in  \cite{KPU-ann} are free resolutions of $I_d(B)$ and of $I_d(B)+C(\varphi)$, where $B=B'$ is the full Jacobian dual in this case and $C(\varphi)$ is the content ideal of Definition \ref{C-of-phi}.
A step towards the proof of 
Theorem~\ref{correct grade} is  Lemma~\ref{Y1.6+}, where we show that up to radical, $\A$ is the socle of the symmetric algebra as an $R$-module and $I$ is of fiber type; see \ref{LT-FT} for the definition of fiber type. We deduce this statement from the other parts of the same lemma, which say that even locally on the punctured spectrum of the polynomial ring $S$, the ideal $\A$ belongs to the socle and $I$ is of fiber type. These facts are proved using the methods of \cite{KPU-DBLC}, most notably Corollary~\ref{cor-to-localCoh} of the present paper. 


The assumption that $I$ is of linear type on the punctured spectrum of $R$ is rather natural; see \ref{Intro-to-BA}. It is satisfied for instance if $I$ is perfect of height two or 
Gorenstein of height three and  $I$ satisfies $G_d$.
If such an ideal is, in addition,  linearly presented, then the hypothesis about an approximate resolution $C_{\bullet}$ of $\Sym_t(I(\delta))$ holds as well;
see Remark~\ref{ass-satisfied}. 

The index $t$ of the symmetric power $\Sym_t(I(\delta))$
in Lemma~\ref{Y1.6+} and Theorem~\ref{correct grade} 
can be any integer greater than or equal to the relation type of $I$. 
The {\it relation type} of an ideal  $I$ as in Data~\ref{data1} is the largest integer $q$ such that  $\mathcal A_{(*,q)}$ contains a minimal bi-homogeneous  generator of $\mathcal A$. 
We recall the bi-homogeneous ideal $\, \A=\J/\L\, $ of $\Sym(I(\delta))$ as well as the  bi-homogeneous ideals $\L$ and $\J$ of $S$ that define the symmetric algebra and the Rees ring, respectively; see \ref{grading} and \ref{LT-FT}.

\begin{lemma}\label{Y1.6+}Adopt Data~{\rm\ref{data1}}.
 Suppose further that $I$ is of linear type on the punctured spectrum of $R$ and that  for some index $t$ greater than or equal to the relation type of $\, I$ 
there exists a homogeneous complex 
\[C_{\bullet}: \qquad \ldots \,  \longrightarrow C_1 \longrightarrow C_0 \longrightarrow 0 \]
 of finitely generated graded free $R$-modules such that 
\begin{enumerate}[\rm (1)]\item $\HH_0(C_{\bullet})=\Sym_t(I(\delta))\, ,$
\item the subcomplex
$\, C_d\to\ldots \to C_0\, $ of
$C_{\bullet}$ is linear {\rm(}that is, $C_i=R(-i)^{\beta_i}\, $ for $0\le i\le d${\rm)}, and
\item $\dim\HH_j(C_{\bullet})\le j\, $ \,for $1\le j\le d-1$.\end{enumerate}
Let $\mathfrak M$ be the ideal $(x_1,\dots,x_d,T_1,\dots,T_n)$ of $S$. Then the following statements hold.\begin{enumerate}[\rm(a)]\item\label{Y2.8.1.i} On   $\Spec (S)\setminus \{\mathfrak M\}$, the ideal $\mathcal A$ of $\, \Sym(I(\delta))$ is annihilated by $\mathfrak m$; that is, $$(\m \A)_\P=0\quad \text{ for every}\ \ \P \in \Spec(S)\setminus \{\M\}\,.$$
\item\label{Y2.8.1.ii} On $\Spec (S)\setminus \{\mathfrak M\}$, $I$ is of fiber type; that is, $$\A_{\P}=I(X)\cdot\Sym(I(\delta))_{\P}\,   \text{\quad for every} \ \ \P \in \Spec(S)\setminus \{\M\}\,.$$
\item\label{Y2.8.1.iii} The ideal $\J$ of $\, S$  satisfies
$$\J=\sqrt{\L:_S \m}=\sqrt{(\L,I(X))}\,.$$
\end{enumerate}
\end{lemma}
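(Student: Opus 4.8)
The plan is to prove the three statements of Lemma~\ref{Y1.6+} in the order (\ref{Y2.8.1.i}), (\ref{Y2.8.1.ii}), (\ref{Y2.8.1.iii}), deducing each from its predecessors. For part (\ref{Y2.8.1.i}), the strategy is to apply the linear-resolution machinery of Corollary~\ref{cor-to-localCoh} to the complex $C_\bullet$. Since by hypotheses (1), (2), (3) the subcomplex $C_d\to\cdots\to C_0\to 0$ is a $q$-linear complex of free $R$-modules with $q=1$ (the shifts are $R(-i)$, so the relevant $q$ in the corollary is $1$, or after a twist we reduce to the $0$-linear case) and $\dim\HH_j(C_\bullet)\le j$ for $1\le j\le d-1$, Corollary~\ref{cor-to-localCoh} yields that $\HH^0_\mathfrak m(\Sym_t(I(\delta)))$ is concentrated in a single $x$-degree. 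By (\ref{apr30'}) (using that $I$ is of linear type on the punctured spectrum, so $\mathcal A=\HH^0_\mathfrak m(\Sym(I(\delta)))$), this says $\mathcal A_{(*,t)}$ is concentrated in one degree. The point is that concentration in a single degree forces $\mathfrak m$ to annihilate $\mathcal A_{(*,t)}$: any element in the lone nonzero graded piece, when multiplied by a variable $x_i$, lands in an adjacent graded piece which must vanish. Because $t$ is at least the relation type, $\mathcal A$ is generated by $\mathcal A_{(*,q)}$ for $q\le t$, and one propagates the vanishing from the top symmetric power down. Localizing away from $\mathfrak M$ then gives $(\mathfrak m\mathcal A)_{\mathfrak P}=0$.

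For part (\ref{Y2.8.1.ii}), the plan is to exploit that the fiber-type condition (see \ref{LT-FT}) is exactly the statement that $\mathcal A$ is generated in bi-degrees $(0,\star)$, equivalently $\mathcal A=I(X)\cdot\Sym(I(\delta))$. Working locally at $\mathfrak P\in\Spec(S)\setminus\{\mathfrak M\}$, part (\ref{Y2.8.1.i}) tells us $\mathfrak m$ annihilates $\mathcal A_\mathfrak P$, so $\mathcal A_\mathfrak P$ is a module over $(\Sym(I(\delta))/\mathfrak m\Sym(I(\delta)))_\mathfrak P=A_\mathfrak P$. I would then argue that an $\mathfrak m$-annihilated submodule of the symmetric algebra localized off $\mathfrak M$ must already come from the special fiber, i.e. be generated by elements in $x$-degree zero; this identifies $\mathcal A_\mathfrak P$ with $I(X)\cdot\Sym(I(\delta))_\mathfrak P$. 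The two equivalent formulations in \ref{LT-FT} make this bookkeeping routine once the $\mathfrak m$-annihilation of (\ref{Y2.8.1.i}) is in hand.

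For part (\ref{Y2.8.1.iii}), I would pass from the punctured-spectrum statements to a global radical statement. The inclusions $\L+I_d(B)\subset\L:_S\mathfrak m\subset\J$ from (\ref{6.2.1}) give one chain; combined with the fiber-type equivalences this yields $\L:_S\mathfrak m\subset(\L,I(X))\subset\J$. The content of (\ref{Y2.8.1.i}) and (\ref{Y2.8.1.ii}) is that these inclusions become equalities after inverting $\mathfrak m$ and localizing at any $\mathfrak P\neq\mathfrak M$; that is, $\J$, $\L:_S\mathfrak m$, and $(\L,I(X))$ agree on $\Spec(S)\setminus\{\mathfrak M\}$. Therefore the three ideals have the same radical provided $\mathfrak M$ is not a minimal prime of any of them, equivalently provided $\J$ has a component supported away from $\mathfrak M$. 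Since $\R(I)=S/\J$ has dimension $d+1\ge 2$ (it dominates the Rees ring, whose dimension is $\dim R+1$), $\mathfrak M$ cannot be a minimal prime of $\J$, and the same dimension count applies to $\L:_S\mathfrak m$ and $(\L,I(X))$; hence agreement off the single closed point $\mathfrak M$ upgrades to equality of radicals, giving $\J=\sqrt{\L:_S\mathfrak m}=\sqrt{(\L,I(X))}$.

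The main obstacle I anticipate is the descent step in part (\ref{Y2.8.1.i}): Corollary~\ref{cor-to-localCoh} applies to the \emph{single} symmetric power $\Sym_t(I(\delta))$, so it directly controls only $\mathcal A_{(*,t)}$, whereas one needs the $\mathfrak m$-annihilation (on the punctured spectrum) for the full ideal $\mathcal A=\bigoplus_q\mathcal A_{(*,q)}$. The fact that $t$ is chosen to be at least the relation type is exactly what makes this work, since then every minimal bi-homogeneous generator of $\mathcal A$ lives in some $\mathcal A_{(*,q)}$ with $q\le t$; but one still has to argue carefully that the concentration-in-one-degree conclusion for the top power transfers to $\mathfrak m$-annihilation of all lower-degree generators locally off $\mathfrak M$. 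I expect this to require either an induction on $q$ using the multiplicative structure of $\Sym(I(\delta))$ (noting $\mathfrak m\mathcal A_{(*,q)}\subset\mathcal A_{(*,q)}$ and compatibility of the symmetric-power grading) or a direct appeal to the generation degree result of Theorem~\ref{SJD-cor2} applied to the same complex, which would pin down that the generators of $\mathcal A$ sit in the expected single degree and are thus killed by $\mathfrak m$ locally.
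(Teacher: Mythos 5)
The skeleton of your proposal matches the paper's proof (concentration in one degree via Corollary~\ref{cor-to-localCoh}, then the relation-type hypothesis, then a radical argument for (c)), but the step you yourself flag as ``the main obstacle'' is precisely the heart of the paper's proof, and neither of your proposed fixes closes it. The paper's argument runs \emph{upward}, not downward: since $t$ is at least the relation type, $\A_{(\star,\ge t)}$ is generated by $\A_{(\star,t)}$ over $\Sym(I(\delta))$, and since $\A_{(\star,t)}=\A_{(0,t)}$ is killed by $\m$, this generation in fact takes place over $\Sym(I(\delta))_{(0,\star)}$, giving $\A_{(\star,\ge t)}=\A_{(0,\ge t)}$. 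Consequently $(T_1,\dots,T_n)^t\A_{(>0,\star)}=0$; combining this with the fact that $\A=\HH^0_{\m}(\Sym(I(\delta)))$ is $\m$-power torsion shows that $\A_{(>0,\star)}$ is $\M$-power torsion, hence vanishes on $\Spec(S)\setminus\{\M\}$. Your fix via Theorem~\ref{SJD-cor2} cannot work: applied to $C_\bullet$ it only bounds the generator degrees of the single $R$-module $\HH^0_{\m}(\Sym_t(I(\delta)))=\A_{(\star,t)}$; it says nothing about generators of the ideal $\A$ in $T$-degrees below $t$, and the global statement you want it to deliver (all generators of $\A$ in $x$-degree zero, i.e.\ global fiber type) is exactly what is \emph{not} available under these hypotheses --- the lemma deliberately asserts it only locally off $\M$. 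Your fix via ``induction on $q$'' points in the right direction but, as you note, is not executed; note also that pure downward induction gives only annihilation by powers of $(T_1,\dots,T_n)$, which must then be combined with the $\m$-power torsion of $\A$ to get vanishing off $\M$.

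A second, independent gap: part (\ref{Y2.8.1.ii}) does not follow from part (\ref{Y2.8.1.i}) by the principle you invoke. It is false that an $\m$-annihilated submodule of $\Sym(I(\delta))_{\P}$ must ``come from the special fiber'': a bi-homogeneous socle element $\alpha$ of positive $x$-degree is annihilated by $\m$, yet it survives localization at, say, $\P=\m S\neq\M$ unless it is \emph{also} killed by a power of $(T_1,\dots,T_n)$ --- and establishing that torsion is again the missing propagation argument. This is why the paper proves the stronger intermediate statement that $\A_{(>0,\star)}$ itself vanishes on $\Spec(S)\setminus\{\M\}$, and then deduces (\ref{Y2.8.1.i}) from $\m\A\subset\A_{(>0,\star)}$ and (\ref{Y2.8.1.ii}) from $\A=I(X)\cdot\Sym(I(\delta))+\A_{(>0,\star)}$; deriving (b) from (a) alone is not possible. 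Your part (\ref{Y2.8.1.iii}) is essentially sound once (a) and (b) are in hand --- agreement of the three ideals off $\M$, together with the fact that all of them are contained in the prime $\J\subsetneq\M$, forces equal radicals --- though the intermediate inclusion $\L:_S\m\subset(\L,I(X))$ you assert is unjustified (it would require global fiber type) and is not needed for the argument.
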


\begin{proof} 
Recall that $\mathcal A=\HH^0_\mathfrak m(\Sym(I(\delta)))$ since $I$ is of linear type on the  punctured spectrum of $R$; see (\ref{A=H}). In particular, 
$$\mathcal A_{\,(\star,t)}=\HH^0_\mathfrak m(\Sym_t(I(\delta)))\, .$$
Apply Corollary~\ref{cor-to-localCoh} to the $R$-module $\Sym_t(I(\delta))$ to conclude that
$$\mathcal A_{\, (\star, t)}=\mathcal A_{\,(0,t)}\, .
$$
Therefore 
$$\mathcal A_{\,(\star,\ge t)}=\mathcal A_{\,(0,\ge t)}
$$
because, by the definition of relation type,  $\mathcal A_{\,(\star, t)}$ generates $\mathcal A_{\,(\star, \ge t)}$ as a module over $\Sym(I(\delta))$, hence over 
$\Sym(I(\delta))_{(0,\star)}$. 
It follows that $(T_1,\ldots,T_n)^t \A\subset \mathcal A_{\,(0,\ge t)}$\,, which gives
$$(T_1,\ldots,T_n)^t \A_{\,(>0, \star) }=0\, .
$$
Since, moreover,  $\A_{\,(>0, \star)}$ vanishes locally on $\Spec(R) \setminus \{\m\}$, we deduce that
$$\A_{\,(>0, \star)} =0 \mbox{\ \, locally on\,} \Spec(S) \setminus \{\M\}\, .
$$

This completes the proof of (\ref{Y2.8.1.i}) because $\m \A \subset \A_{\,(>0, \star)}\,$,  and of (\ref{Y2.8.1.ii}) because   $\A=I(X)+\A_{\,(>0, \star)}$.
As for (\ref{Y2.8.1.iii}), we recall the obvious inclusion $\L :_S \m \subset \J$. Part (\ref{Y2.8.1.i}) shows that if $\P \in \Spec(S)\setminus \{\M\}$ contains 
$\L :_S \m $ then $\P$ contains $\J $. Since $\M$ contains $\J$ anyway, we deduce that $\J\subset \sqrt{\L:_S \m}$. The same argument, with part (\ref{Y2.8.1.i}) replaced by (\ref{Y2.8.1.ii}), shows that $\J=\sqrt{(\L,I(X))}\,.$
\end{proof}

\begin{theorem}\label{correct grade}Retain all of the notation and hypotheses of Lemma~{\rm\ref{Y1.6+}}. 
Let $\varphi$ be a minimal homogeneous presentation matrix of $g_1, \ldots, g_n\, ,$ $\varphi '$ be the submatrix of
$\varphi$ that consists of the linear columns of $\varphi$, $B$ be 
a homogeneous Jacobian dual of $\varphi$, and $B'$ be the Jacobian dual of $\varphi '$ in the sense of {\,\rm\ref{def1}}.
Then the following statements hold.
\begin{enumerate}[\rm(a)] 
\item\label{Y2.8.2} The ideal $I(X)$ of $T$ which defines the variety $X$ satisfies
$$I(X)=\sqrt{I_d(B')} \, .$$
\item\label{Y2.8.2.5}  $\operatorname{ht}(I_d(B'))=n-\dim A\, .$
\item\label{Y2.8.3} The ideal $\J$ of $S$ which  defines the Rees ring $\mathcal R(I)$ satisfies
$$\J=\sqrt{(\mathcal L,I_d(B'))}=\sqrt{(\mathcal L,I_d(B))}\,.$$
\end{enumerate}
\end{theorem}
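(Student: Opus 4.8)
The plan is to reduce all three assertions to the single containment $I(X)\subseteq\sqrt{I_d(B')}$ and then to prove that geometrically. First I record the easy inclusions. Since $B'$ is the submatrix of $B$ consisting of the columns whose entries lie in $T$ (those coming from the linear columns of $\varphi$), we have $I_d(B')\subseteq I_d(B)\subseteq\J$ by (\ref{6.2.1}); and because the maximal minors of $B'$ lie in $T$, (\ref{I of X}) gives $I_d(B')\subseteq\J\cap T=I(X)$. As $X$ is a variety, $I(X)$ is prime, whence $\sqrt{I_d(B')}\subseteq I(X)$ and $\sqrt{(\L,I_d(B'))}\subseteq\sqrt{(\L,I_d(B))}\subseteq\J$. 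On the other hand, Lemma~\ref{Y1.6+} provides $\J=\sqrt{(\L,I(X))}$. Thus, granting $I(X)\subseteq\sqrt{I_d(B')}$, I obtain $I(X)=\sqrt{I_d(B')}$, which is (a); then $\htt I_d(B')=\htt I(X)=n-\dim A$, which is (b); and substituting $I(X)=\sqrt{I_d(B')}$ into $\J=\sqrt{(\L,I(X))}$ collapses the chain above to equalities, which is (c). So everything comes down to proving $I(X)\subseteq\sqrt{I_d(B')}$, equivalently $V(I_d(B'))\subseteq X$ in $\mathbb P^{n-1}_k$.

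For this I would work biprojectively in $\mathbb P^{d-1}_k\times\mathbb P^{n-1}_k$ and exploit the two projections $p_1,p_2$. Biprojectively, the variety cut out by $\L=I_1(\underline x\cdot B)$ equals the graph of $\Psi$ defined by $\J$: their difference is measured by $\A=\J/\L=\HH^0_{\m}(\Sym(I(\delta)))$ of (\ref{A=H}), which is annihilated by a power of $\m=(x_1,\dots,x_d)$ and hence supported on the biprojectively irrelevant locus $V(\m)$. By \ref{graph} the projection $p_2$ carries the graph onto $X$, so $p_2(V(\L))=X$. Writing $\L'=I_1(\underline T\cdot\varphi')=I_1(\underline x\cdot B')$, the fiber of $V(\L')$ over $[\beta]$ is $\{[a]:\underline a\cdot B'(\beta)=0\}$, which is nonempty precisely when $\rank B'(\beta)<d$; hence $p_2(V(\L'))=V(I_d(B'))$ set-theoretically. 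Since $\L'\subseteq\L$ we have $V(\L)\subseteq V(\L')$, which re-proves the inclusion $X\subseteq V(I_d(B'))$.

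The substance is the reverse inclusion $p_2(V(\L'))\subseteq X$. When $I$ is linearly presented — the case needed for Theorem~\ref{MainT} — one has $\varphi=\varphi'$, so $V(\L')=V(\L)$ equals the graph and $V(I_d(B'))=p_2(V(\L'))=X$ outright. In general I would analyze the fibers of $V(\L')$ under $p_1$: over $[a]$ the fiber is $\mathbb P$ of the left kernel of $\varphi'(a)$, and because $\underline g\cdot\varphi'=0$ this kernel always contains $\underline g(a)$. On the dense open set where $\varphi'(a)$ has maximal rank $n-1$ the left kernel is one-dimensional, so the fiber is the single point $[\underline g(a)]=\Psi([a])$ (or its limit), which lies in $X$; thus this part of $V(\L')$ projects into $X$ and has the graph as its closure. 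The remaining fibers sit over the rank-deficient locus $V(I_{n-1}(\varphi'))\subseteq\mathbb P^{d-1}$, whose dimension I would control using $G_d$ (which bounds $\htt I_{n-1}(\varphi)$), and then argue by a fiber-dimension count that these contribute no point of $p_2(V(\L'))$ outside $X$.

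The main obstacle is precisely this last step in the non-linearly-presented case: a spurious point $[\beta]$ of $V(I_d(B'))$ would arise from some $[a]$ with $\underline\beta\cdot\varphi'(a)=0$ at which $\beta$ is not proportional to $\underline g(a)$, i.e.\ from the locus where the linear syzygy matrix $\varphi'$ drops rank, and for an \emph{exact} radical equality such points must be excluded entirely, not merely shown to form a lower-dimensional set. This is where the full strength of $G_d$ together with the linearity of the approximate resolution of $\Sym_t(I(\delta))$ inherited from Lemma~\ref{Y1.6+} must be brought to bear. Once $V(I_d(B'))=X$ is established, assertion (a) is immediate, (b) follows from $\htt I(X)=n-\dim A$, and (c) follows by combining (a) with $\J=\sqrt{(\L,I(X))}$ and the inclusions of the first paragraph.
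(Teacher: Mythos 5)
Your first paragraph (reducing all three parts to the single containment $I(X)\subseteq\sqrt{I_d(B')}$ via Lemma~\ref{Y1.6+}.\ref{Y2.8.1.iii} and the Cramer-rule inclusions) matches the paper's skeleton, and your geometric argument does settle that containment when $\varphi=\varphi'$, i.e.\ when $I$ is linearly presented: there $\L$ and $\J$ have the same support off $V(\m S)$, hence define the same closed subset of $\mathbb P^{d-1}\times\mathbb P^{n-1}$, and projecting identifies $X$ with $V(I_d(B))$ (modulo standard care about non-closed fields and about passing from set-theoretic equality to equality of radicals, for which your easy inclusion $I_d(B')\subseteq I(X)$ is needed). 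But the theorem is stated under the hypotheses of Lemma~\ref{Y1.6+} alone, where $\varphi$ may have columns of higher degree, and for that case you concede you have no proof: you say the spurious points ``must be excluded entirely'' and that this is where $G_d$ and the linearity of $C_\bullet$ ``must be brought to bear'' --- but you never carry this out, and in fact $G_d$ is not even among the available hypotheses (it enters only in Remark~\ref{ass-satisfied}, when the hypotheses of Lemma~\ref{Y1.6+} are verified for particular ideals). The gap is not cosmetic: once $\varphi'$ has $s<n-1$ columns, the $p_1$-fiber of $V(\L')$ over \emph{every} point $[a]$ is the projectivized left kernel of $\varphi'(a)$, of dimension at least $n-s-1>0$, so your dense-open-set analysis, which rests on that kernel being the single point $[\underline g(a)]$, collapses, and $p_2(V(\L'))=V(I_d(B'))$ can no longer be compared with $X$ this way.

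The idea you are missing is the paper's purely algebraic one, which bypasses all fiber geometry and handles the general case uniformly. By Lemma~\ref{Y1.6+}.\ref{Y2.8.1.iii}, $\J=\sqrt{\L:_S\m}$. Now $\L:_S\m=\ann_S M$ for the $S$-module $M=\m S/\L$, and since $\L=I_1(\underline{x}\cdot B)$, the module $M$ is presented by the concatenated matrix $\left[\begin{array}{c|c}\Pi&B\end{array}\right]$, where $\Pi$ is a presentation matrix of $\underline{x}$ with entries in $\m$. The standard inclusion $\ann_S M\subseteq\sqrt{\operatorname{Fitt}_0(M)}$ then gives
$$\J=\sqrt{\L:_S\m}\subseteq\sqrt{(\m,I_d(B))}=\sqrt{(\m,I_d(B'))}\,,$$
where the last equality holds because every column of $B$ arising from a nonlinear column of $\varphi$ has entries of positive $x$-degree, hence lying in $\m S$, so only the columns of $B'$ survive modulo $\m$; this is exactly how $B'$ enters the statement. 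Intersecting with $T$ yields
$$I(X)=\J\cap T\subseteq\sqrt{(\m,I_d(B'))}\cap T=\sqrt{(\m,I_d(B'))\cap T}=\sqrt{I_d(B')}\,,$$
which is precisely the containment your approach cannot reach in general. Your geometric route remains a clean alternative in the linearly presented case (the only case used for Theorem~\ref{MainT}), but it does not prove the theorem as stated.
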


\begin{proof} Part (\ref{Y2.8.2.5}) is an immediate consequence of (\ref{Y2.8.2}). We prove items (\ref{Y2.8.2}) and (\ref{Y2.8.3}) simultaneously. We first notice that $B'$ is a submatrix of $B$, the ideal $I_d(B)$ of $S$ is bi-homogeneous, the two ideals $(\m,I_d(B))$ and $(\m, I_d(B'))$ of $S$ are equal, and the two ideals $I_d(B)\cap T$ and $I_d(B')$ of $T$ are equal.

From Lemma~\ref{Y1.6+}.\ref{Y2.8.1.iii} we know that $$\J=\sqrt{\L:_S\m}\, .$$
The ideal $\mathcal L:_S\mathfrak m$
is the annihilator of the $S$-module $M=\mathfrak m S/\mathcal L$. The ideal $\mathfrak m S$ is generated by  the entries  of $\underline{x}$ and the ideal $\mathcal L$ is generated by the entries of $\underline{x} \cdot B$. It follows that $M$ is presented by 
$\left[\begin{array}{c | c}
\Pi & B  \\
\end{array}
\right],$ where $\Pi$ is a presentation matrix of $\underline{x}$ with entries in $\m$. 
Therefore 
$$\mathcal L:_S\mathfrak m = \ann_S \, M \subset \sqrt{\operatorname{Fitt}_0(M)} \subset \sqrt{(\mathfrak m, I_d(B))}\, .
$$
It follows that 
$$\J \subset \sqrt{(\mathfrak m, I_d(B))}=\sqrt{(\mathfrak m, I_d(B'))}\, .
$$
Intersecting with $T$ we obtain 
$$I(X)=\J \cap T \subset \sqrt{(\mathfrak m, I_d(B'))} \cap T= \sqrt{(\mathfrak m, I_d(B')) \cap T}=\sqrt{ I_d(B')}\, ,
$$
where the last two radicals are taken in the ring $T$. This proves part (\ref{Y2.8.2}). 

From Lemma~\ref{Y1.6+}.\ref{Y2.8.1.iii} we also know that
$$\J=\sqrt{(\L,I(X))}\, ,$$
and then by part  (\ref{Y2.8.2}) 
$$\J=\sqrt{(\mathcal L,I_d(B'))}\,.$$
Finally, recall the inclusions $I_d(B') \subset I_d(B) \subset \J$, and the proof of part (\ref{Y2.8.3}) is complete. 
\end{proof}

\vspace{-0.15cm}
 
 \begin{remark} Retain the notation and the hypotheses of Theorem~\ref{correct grade}, and recall the definition of analytic spread from \ref{graph}. If $d < n$ then $\ell (I) <n \,$;
 and if $\ell(I)<n$ then $\varphi'$ has at least $n-1$ columns. 
 \end{remark}
 \begin{proof}  Write $s$ for the number of columns of $\varphi'$, which is also the number of columns of $B'$.  One has
  $$n-d \leq n-\ell(I)=n-\dim\, A= \htt I_d(B') \le \max \{s-d+1,0\}\, ,$$
where the first inequality holds by (\ref{ell}), the first equality follows from \ref{graph}, the second equality is Theorem~\ref{correct grade}.\ref{Y2.8.2.5}, and the last inequality holds by the Eagon-Northcott bound on the height of determinantal ideals. 
These inequalities show that if $0<n-d$ then $0<n-\ell (I)$ and if $0<n-\ell(I)$ then $n-d \leq s-d+1$.
 \end{proof}

 \begin{remark}\label{ass-satisfied} Adopt Data~\ref{data1}.   Further assume that $I$ is perfect of height two or Gorenstein of height three, $I$ is linearly presented,  and  $I$  satisfies $G_d$. Then the hypotheses of Lemma~\ref{Y1.6+} and Theorem~\ref{correct grade} are satisfied. 
 \end{remark}
 \begin{proof} We know from (\ref{A=H}) and (\ref{apr30'}) that the ideal $I$ is of linear type on the punctured spectrum. 
 
 We now establish the existence of a complex $C_{\bullet}$. In the first case, when  $I$ is perfect of height two, we can take $C_{\bullet}$ to be the complex $C_{\bullet}^t=\mathbb K_{\bullet(\star,t)}$ defined in the proof of Theorem~\ref{App-to-BA-1}.\ref{App-1-a}. This is a linear complex of finitely generated graded free $R$-modules which is acyclic on the punctured spectrum and has $\Sym_t(I(\delta))$ as zeroth homology. These facts are shown in the proof of Theorem~\ref{App-to-BA-1}.a, most notably in (\ref{mgd}).
 
 In the second case, when $I$ is Gorenstein of height three, we  take $C_{\bullet}$ to be the complex $\mathcal D^t_{\bullet}(\varphi)$ used in the proof of Theorem~\ref{App-to-BA-1}.\ref{App-1-b}, with the additional restriction that $d\le t$. This is a  complex of finitely generated graded free $R$-modules which has $\Sym_t(I(\delta))$ as zeroth homology. Moreover, this complex is acyclic on the punctured spectrum, see Lemma~\ref{Acyclic},
 and it is linear for the first $d$ steps, see (\ref{D-twists}). 
 \end{proof}

We end this section by recording a fact that is contained in the proof of Theorem~\ref{App-to-BA-1}.\ref{App-1-b}, but may well be of independent interest. Recall the functions $a_i$ and $\reg$ from \ref{numerical-functions} and \ref{aiM}. 
\begin{proposition}\label{symmetric} 
Adopt Data {\rm\ref{data1}}. Further assume that  $I$ is a linearly presented Gorenstein ideal of height three.
Let $q$ be a non-negative integer. If $\mu(I_{\mathfrak p})\le \dim  R_{\mathfrak p}+1$
  for each prime ideal  $ \mathfrak p\in V(I)$   with $\dim R_{\mathfrak p} \le d-2$,  then 
 $$a_i(\Sym_q(I))\le \begin{cases}  q\delta-i &\text{if $q$ is even or $q\neq d-i-1$}\\
q\delta+\frac{n-q}{2}-1-i& \text{if $q$ is odd and $q =d-i-1$}.\end{cases} 
 $$
 In particular, 
$$\reg \, \Sym_q(I)  \ \begin{cases}=q\delta &\text{if $q$ is even or $d\le q$}\\
\le q\delta+\frac{n-q}{2}-1  & \text{if $q$ is odd and $q \le d-1$}.\end{cases}  $$
\end{proposition}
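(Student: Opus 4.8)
The plan is to apply Proposition~\ref{localCoh} for \emph{every} index $i$ (not just $i=0$, as in its corollaries) to the module $M=\Sym_q(I(\delta))$ together with the complex $C_\bullet=\mathcal D^q_\bullet(\varphi)$ of \cite{KU-Fam} recorded in (\ref{D-twists}) with $D=1$, where $\varphi$ is the alternating linear presentation matrix of $I$. Since $I=I(\delta)(-\delta)$ and $\Sym_q$ multiplies a degree shift by $q$, one has $\Sym_q(I)=\Sym_q(I(\delta))(-q\delta)$, hence $a_i(\Sym_q(I))=a_i(\Sym_q(I(\delta)))+q\delta$. Thus it suffices to bound the $a_i$ of the degree-zero-generated module $\Sym_q(I(\delta))$ and then add $q\delta$.

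First I would verify the two hypotheses of Proposition~\ref{localCoh} for each $0\le i\le d$. Hypothesis~(1) on depth is automatic because every $\mathcal D^q_j$ is a free $R$-module, hence maximal Cohen-Macaulay (Remark~\ref{rmk-3}). For hypothesis~(2), the assumption of the present proposition is exactly condition~(\ref{WG}) of Lemma~\ref{Acyclic} with $s=d-2$, so that lemma makes $\mathcal D^q_\bullet(\varphi)$ acyclic at every prime of height at most $d-2$; consequently the higher homology $\HH_j(\mathcal D^q_\bullet(\varphi))$, $j\ge1$, is supported only on primes $\mathfrak p$ with $\dim R/\mathfrak p\le1$, so that $\dim\HH_j\le1\le j\le j+i$ for all $j\ge1$ and all $i\ge0$. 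Therefore Proposition~\ref{localCoh}(b) applies for each $i$ and yields
$$a_i(\Sym_q(I(\delta)))\le b_0(\mathcal D^q_{d-i})+a(R)=b_0(\mathcal D^q_{d-i})-d,$$
using $a(R)=-d$ for the polynomial ring $R$.

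The crux is then to read $b_0(\mathcal D^q_{d-i})$ off of (\ref{D-twists}) with $r=d-i$, distinguishing its three possible shapes. If $\mathcal D^q_{d-i}$ is the linear term $K_{q-(d-i),\,d-i}=R(-(d-i))^{\beta^q_{d-i}}$, then $b_0=d-i$ and $a_i(\Sym_q(I(\delta)))\le-i$, i.e. $a_i(\Sym_q(I))\le q\delta-i$; if $\mathcal D^q_{d-i}=0$ the left side is $-\infty$ and the same bound holds vacuously. By (\ref{D-twists}) the only remaining case is the Pfaffian term $\mathcal D^q_{d-i}=Q_q$, occurring exactly when $d-i=q+1$ and $q$ is odd, that is, $q=d-i-1$ is odd; there $b_0(Q_q)=q+\tfrac{n-q}{2}$, and substituting $i=d-1-q$ gives $a_i(\Sym_q(I))\le q\delta+\tfrac{n-q}{2}-1-i$. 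This reproduces precisely the case split in the statement, the second line arising exactly when $q$ is odd and $q=d-i-1$.

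For the regularity I would use $\reg\Sym_q(I)=\max_i\{a_i+i\}$. The generic bound contributes $a_i+i\le q\delta$, while the exceptional index $i=d-1-q$ (present only when $q$ is odd with $q\le d-1$, and then, since $Q_q\neq0$ forces $q+1\le n-1$, satisfying $q\le n-2$ and $\tfrac{n-q}{2}-1\ge0$) contributes $q\delta+\tfrac{n-q}{2}-1\ge q\delta$. For the matching lower bound, $\reg\Sym_q(I)\ge b_0(\Sym_q(I))=q\delta$, because $\Sym_q(I)$ is generated in the single degree $q\delta$. Hence $\reg\Sym_q(I)=q\delta$ when $q$ is even or $q\ge d$ (the exceptional term being absent), and $\reg\Sym_q(I)\le q\delta+\tfrac{n-q}{2}-1$ when $q$ is odd and $q\le d-1$, the latter under the standing assumption $d\le n$ that guarantees $q\le n-2$ (an odd $q\le d-1\le n-1$ cannot equal the even number $n-1$). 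The main obstacle is not any single hard estimate but the bookkeeping: matching the three shapes of $\mathcal D^q_{d-i}$ against the parity of $q$, tracking the degree shift by $q\delta$, and verifying once and for all (via Lemma~\ref{Acyclic}) that the homological-dimension hypothesis of Proposition~\ref{localCoh} holds simultaneously for every $i$. Beyond that, the argument simply runs the $i=0$ computation from the proof of Theorem~\ref{App-to-BA-1}.\ref{App-1-b} for all $i$ at once.
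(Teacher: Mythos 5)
Your proof is correct and takes essentially the same route as the paper's: both apply Proposition~\ref{localCoh} to $M=\Sym_q(I(\delta))$ with $C_\bullet=\mathcal D^q_\bullet(\varphi)$, check the dimension hypothesis via Lemma~\ref{Acyclic} with $s=d-2$ (the depth hypothesis being automatic for free modules, Remark~\ref{rmk-3}), read the twists off (\ref{D-twists}), and obtain the equality in the regularity statement from the lower bound $\reg\Sym_q(I)\ge b_0(\Sym_q(I))=q\delta$, which is the paper's remark that the regularity of the degree-zero-generated module $\Sym_q(I(\delta))$ is non-negative. The only cosmetic difference is that you invoke part (b) of Proposition~\ref{localCoh} where the paper invokes part (a) and then bounds $\topdeg \HH^d_\m$ of the free modules in question --- the same computation.

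One caveat, which reflects on the statement rather than on your argument: the ``standing assumption $d\le n$'' you invoke at the end does not exist --- neither Data~\ref{data1} nor Proposition~\ref{symmetric} assumes $d\le n$. You do genuinely need it (equivalently $q\le n-2$, which by parity of $n$ and $q$ follows from $q\le d-1\le n-1$) so that the generic contribution $q\delta$ to $\max_i\{a_i(\Sym_q(I))+i\}$ is absorbed by the asserted bound $q\delta+\frac{n-q}{2}-1$. The paper's proof requires exactly the same inequality at exactly the same point and passes over it in silence: when $q$ is odd with $n\le q\le d-1$, the module $Q_q$ does not occur in $\mathcal D^q_\bullet(\varphi)$, every nonzero $\mathcal D^q_{d-i}$ is linear, and the argument then only yields $\reg\Sym_q(I)=q\delta$, which is strictly larger than the asserted bound $q\delta+\frac{n-q}{2}-1$; so the second case of the ``in particular'' display cannot hold in that range. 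Your write-up therefore proves exactly what the paper's proof proves, and the restriction you flagged ($d\le n$, or just $q\le n-2$) is one the paper's statement should have carried.
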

\begin{proof} Apply Proposition~\ref{localCoh} with 
$M= \Sym_q(I(\delta))$ and $C_{\bullet}$ the complex $\mathcal D^{q}_{\bullet}(\varphi)$ of (\ref{D-twists}).  
Lemma \ref{Acyclic} guarantees  that $\dim \HH_j(\mathcal D^q_{\bullet}(\varphi))\le 1$ for every $j$ with $1\le j$. Thus Proposition~\ref{localCoh}.\ref{localCoh-1} (see also Remark~\ref{rmk-3}) gives that $\HH^i_{\mathfrak m}(\Sym_q(I(\delta)))$ is a graded subquotient of $\HH^d_{\mathfrak m}(\mathcal D^q_{d-i})$ for $0 \le i \le d$. From  (\ref{D-twists})  we see that either $\mathcal D^q_{d-i}=0$ or else
$$ \mathcal D^q_{d-i} \ \cong \begin{cases}   R(
 -d+i)^{\beta^q_{d-i}} &\text{if $q$ is even or $q\neq d-i-1$}\\
R (-d+i+1-\frac{n-q}{2}) & \text{if $q$ is odd and $q =d-i-1$}.\end{cases}  $$

\smallskip

\noindent
Therefore the top degree of  $\HH^d_{\mathfrak m}(\mathcal D^q_{d-i})$ is at most 
$$  \begin{cases}  d-i-d=  -i &\text{if $q$ is even or $q\neq d-i-1$}\\
d-i-1+\frac{n-q}{2}-d= -i-1+\frac{n-q}{2}& \text{if $q$ is odd and $q =d-i-1$}.\end{cases}  $$

\smallskip
\noindent 
Notice the second case cannot occur if $q$ is even or $d\le q$.  

On the other hand, the fact that the $R$-module $\Sym_q(I(\delta)) $ is generated in degree $0$ implies that the regularity of $ \Sym_q(I(\delta))$ is non-negative. Now the assertions follow. 
 \end{proof}

\medskip

\section{The candidate ideal $C(\varphi)$}


\smallskip

In the present section we exhibit the candidate for the defining ideal of the variety $X$.  Most of this material is taken from \cite{KPU-ann}. 
\begin{definition}\label{C-of-phi} Let $k$ be a field and  $n$ and $d$ be positive integers.  
For each $n\times n$ alternating matrix $\varphi$ with linear entries from the polynomial ring $R=k[x_1,\dots,x_d]$,  define 
an ideal $C(\varphi)$ in  the polynomial ring $T=k[T_1,\dots,T_n]$ as follows. Let $B$ be the $d\times n$ matrix with linear entries from $T$ such that  the matrix equation
\begin{equation}\label{FE}[T_1,\dots,T_n]\cdot \varphi=[x_1,\dots,x_d]\cdot B\end{equation} 
holds.  Consider the $(n+d)\times(n+d)$ alternating matrix $$\goth B=\bmatrix \varphi&-B^{\rm t}\\B&0\endbmatrix,$$with entries in the polynomial ring $S=k[x_1,\dots,x_d,T_1,\dots,T_n]$. Let $F_i$ be $(-1)^{n+d-i}$ times the Pfaffian of $\goth B$ with row and column $i$ removed. 
View  each $F_{i}$ as a polynomial in $T[x_1,\dots,x_d]$ and  
let $$C(\varphi)=c_{T}(F_{n+d})$$ be
the content ideal of $F_{n+d}$ in $T$. \end{definition}


A description of $C(\varphi)$ which is almost coordinate-free is given in \cite{KPU-ann}. The description depends on the choice of a direct sum decomposition of the degree one component of the ring $T$ into $V_1\oplus V_2$ where $V_1$ has dimension one. (The almost coordinate-free description of $C(\varphi)$ depends on the choice of direct sum decomposition; however every decomposition gives rise to the same ideal $C(\varphi)$; see also Proposition~\ref{NE}.\ref{NE2} below). 

\smallskip
For the convenience of the reader, we provide direct proofs of the more immediate properties of the ideal $C(\varphi)$; most notably we describe a generating set of this ideal in Proposition~\ref{formula}. The more difficult results about depth, unmixedness, Hilbert series, and resolutions, however, are merely restated 
from \cite{KPU-ann}; see Theorem~\ref{unm-part}.

\begin{lemma} \label{RE}Retain the setting of  Definition~{\rm \ref{C-of-phi}}. There exists a 
polynomial $h$ in $S$ so that the equality of row vectors 
$$ [F_1,\dots,F_{n+d}]=h\, \cdot [T_1,\ldots,T_n, -x_1, \ldots, -x_d]\, $$
obtains.
\end{lemma}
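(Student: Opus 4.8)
The plan is to exploit the alternating structure of the matrix $\goth B$ and the fundamental identity that submaximal Pfaffians of an alternating matrix give a syzygy of the full matrix. Recall the setup: $\goth B$ is an $(n+d)\times(n+d)$ alternating matrix, and $F_i$ is (a sign times) the Pfaffian of $\goth B$ with row and column $i$ deleted. The classical Buchsbaum--Eisenbud fact is that the row vector $[F_1,\dots,F_{n+d}]$ annihilates $\goth B$, that is $[F_1,\dots,F_{n+d}]\cdot\goth B=0$; equivalently the $F_i$ form the entries of a (co)syzygy of $\goth B$. So the first step is to write down this syzygy relation explicitly.

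The key observation is that $\goth B$ has a \emph{known} nontrivial syzygy coming from the defining equation $(\ref{FE})$. Indeed, the row vector $\underline w=[T_1,\dots,T_n,-x_1,\dots,-x_d]$ satisfies $\underline w\cdot\goth B=0$: computing block-wise, $\underline w\cdot\goth B = [\,\underline T\varphi-\underline x B,\ -\underline T B^{\mathrm t}\,]$, and the first block vanishes by $(\ref{FE})$ while the second block $\underline T B^{\mathrm t}=(B\,\underline T^{\mathrm t})^{\mathrm t}$ vanishes because $B\cdot\underline T^{\mathrm t}=0$ (the Jacobian dual is annihilated by the vector of $T$'s — this is the standard property recalled in the introduction). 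Thus both $[F_1,\dots,F_{n+d}]$ and $\underline w$ are syzygies of the alternating matrix $\goth B$.

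The heart of the argument is then to pass from ``two syzygies'' to ``one is a scalar multiple of the other.'' This follows because $\goth B$ has generic corank one on the relevant locus: its submaximal Pfaffians $F_i$ are not all zero (since $\htt I_d(B)=n-d$ has been established, so $\goth B$ does not drop rank everywhere), and therefore the kernel of $\goth B$ over the total quotient ring of $S$ is one-dimensional. Since $\underline w$ is a nonzero syzygy, it spans that kernel, so there must exist an element $h$ in the total quotient ring with $[F_1,\dots,F_{n+d}]=h\cdot\underline w$. The final step is to check that $h$ actually lies in $S$ rather than just in the fraction field: comparing any coordinate where $\underline w$ has a unit-like entry, or more robustly using that $S$ is a UFD (a polynomial ring) together with the fact that the entries of $\underline w$, namely $T_1,\dots,T_n,x_1,\dots,x_d$, generate the unit ideal up to the irrelevant maximal ideal, forces $h=F_i/w_i\in S$. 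Concretely, from $F_j w_i=F_i w_j$ for all $i,j$ and the coprimality of distinct variables in the UFD $S$, each $w_i$ divides $F_i$ with the same quotient $h$.

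The main obstacle I anticipate is the last integrality step: one must rule out that $h$ is a genuine fraction. The cleanest route is the UFD divisibility argument just sketched — from the relations $F_i\,w_j=F_j\,w_i$ and the fact that the $w_i=\pm T_i,\pm x_j$ are pairwise non-associate primes in $S$, one deduces $w_i\mid F_i$ for each $i$ with a common cofactor. Establishing that the syzygy module of $\goth B$ is free of rank one (so that $\underline w$ genuinely spans it and $[F_1,\dots,F_{n+d}]$ is forced to be proportional) is the conceptual crux and relies on the generic corank-one property of $\goth B$, which in turn uses the height computation $\htt I_d(B)=n-d$ from Theorem~\ref{correct grade}. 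I would verify that this height guarantees the $F_i$ are not identically zero, so that $h\neq 0$ and the proportionality is the nontrivial statement it should be.
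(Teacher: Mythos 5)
Your overall strategy --- exhibit $[T_1,\dots,T_n,-x_1,\dots,-x_d]$ as a second syzygy of the alternating matrix $\goth B$, conclude proportionality over the fraction field from the corank\hyph one property, then force $h\in S$ --- is exactly the paper's argument, and your UFD/coprimality argument for integrality is a perfectly acceptable substitute for the paper's observation that the ideal $(T_1,\dots,T_n,x_1,\dots,x_d)$ has grade at least two. But there is a genuine flaw in the middle step. Lemma~\ref{RE} is stated in the generality of Definition~\ref{C-of-phi}: $\varphi$ is an \emph{arbitrary} $n\times n$ alternating matrix of linear forms, with no Gorenstein, $G_d$, or height hypotheses. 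In that generality the submaximal Pfaffians $F_i$ of $\goth B$ can perfectly well all vanish --- indeed they always do when $n+d$ is even, since a Pfaffian of an odd-size alternating matrix is zero, and Remark~\ref{trivialR}.\ref{nt} depends on applying the lemma precisely in such degenerate cases. So your claim that ``the $F_i$ are not all zero'' is false in part of the range the lemma must cover, and your justification for it, $\htt I_d(B)=n-d$ from Theorem~\ref{correct grade}, is not available here: that computation requires the hypotheses of Lemma~\ref{Y1.6+} (linear type on the punctured spectrum plus an approximate resolution that is linear for $d$ steps), which Definition~\ref{C-of-phi} does not impose. Note also that the lemma does not assert $h\neq 0$; the discussion immediately following it in the paper explicitly allows $h$ and all the $F_i$ to vanish simultaneously.

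The repair is easy and is what the paper does: split into cases. If $[F_1,\dots,F_{n+d}]=0$, take $h=0$ and you are done. If some $F_i\neq 0$, then the determinant of the alternating $(n+d-1)\times(n+d-1)$ submatrix obtained by deleting row and column $i$ is $F_i^2\neq 0$, so $\goth B$ has rank at least $n+d-1$ with no appeal to any height theorem, and your corank\hyph one argument goes through. One further small point: the identity $B\cdot\underline{T}^{\rm t}=0$, which you cite as a ``standard property recalled in the introduction,'' is not a hypothesis in this setting and should be derived; it follows in one line from the alternating property of $\varphi$, namely $0=\underline{T}\,\varphi\,\underline{T}^{\rm t}=\underline{x}\,B\,\underline{T}^{\rm t}$, and since the entries of $B\,\underline{T}^{\rm t}$ lie in $T$ while $x_1,\dots,x_d$ are indeterminates over $T$, each entry must vanish.
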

\begin{proof} We may assume that the row vector $\underline{F}=[F_1, \ldots, F_{n+d}]$ is not zero. Then $\goth B$ has rank at least $n+d-1$. The alternating property of $\varphi$ and  equality (\ref{FE}) imply
$$0=\underline{T}\cdot \varphi\cdot \underline{T}^{t}=\underline{x}\cdot B \cdot \underline{T}^{t}\, .$$
Since the matrix $B$ has only entries in the polynomial ring $T,$ we conclude that 
$$B \cdot \underline{T}^{t}=0 \quad \mbox{and} \quad \underline{T} \cdot B^t=0\, .$$
Hence 
$$ [T_1,\ldots,T_n, -x_1, \ldots, -x_d] \cdot \goth B=0\, .$$
Since also $$[F_1,\dots,F_{n+d}] \cdot \goth B=0$$
and $\goth B$ has almost maximal rank, there exists an element $h$ in the quotient field of $S$ such that 
$$[F_1,\dots,F_{n+d}]=h\, \cdot [T_1,\ldots,T_n, -x_1, \ldots, -x_d]\, .$$
The element $h$ is necessarily  in $S$ because the ideal $(T_1, \ldots, T_n, x_1, \ldots, x_d)$ of $S$ has grade at least two.
\end{proof}

Lemma \ref{RE} shows that either $h$ and the submaximal Paffians of $\goth B$ all vanish, or else, these elements are all
non-zero. In the latter case, $n+d$ has to be odd. Moreover, regarding $S$ as standard bi-graded with ${\rm deg} \, x_j =(1,0)$
and ${\rm deg} \, T_i= (0,1)$, we see that $\goth B$ is the matrix of a bi-homogeneous linear map. Thus the Paffians of this
matrix are bi-homogeneous, hence $h$ is bi-homogeneous, and computing bi-degrees one sees that
$$ {\rm deg} \, h = \left(\frac{n-d-1}2, \, d-1\right) .$$
In particular, if $h \neq 0$ then $d<n$. The next remark is now immediate.

\begin{remark}\label{trivialR}Retain the setting of Definition~\ref{C-of-phi}.
\begin{enumerate}[\rm(a)]
\item The ideal $C(\varphi)$ of $T$ is generated by  homogeneous forms  of degree $d-1$. 
\item\label{nt} If $n\le d$ or $n+d$ is even, then $C(\varphi)=0$.
\end{enumerate}
  \end{remark}

The next proposition shows that the ideal $C(\varphi)$ can be defined using any submaximal Pfaffian of $\goth B$; it also relates  $C(\varphi)$ to the socle modulo
the ideal $I_d(B)$ of $T$.
\begin{proposition}\label{NE}Retain the setting of Definition~\ref{C-of-phi}. 
 \begin{enumerate}
[\rm(a)]\item\label{NE1} 
$ c_T(F_i)=T_i \cdot c_T(F_{n+j})= T_i \cdot C(\varphi)  \ \,  \mbox{  for every } 1\le i \le n \ \mbox{ and }  1\le j \le d .$

\item\label{NE2}  $c_T(F_{n+j})=C(\varphi) \ \,   \mbox{  for every } 1\le j \le d .$
\item\label{NE3} $C(\varphi) \subset  I_d(B):_T(T_1,\dots,T_n)\, ; $ in other words, the image of $C(\varphi)$ is contained in the socle of the standard graded $k$-algebra $T/I_d(B)$. 
\end{enumerate}
\end{proposition}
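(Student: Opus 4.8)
The plan is to read all three statements off Lemma~\ref{RE}, which gives $[F_1,\dots,F_{n+d}]=h\cdot[T_1,\dots,T_n,-x_1,\dots,-x_d]$ for a single polynomial $h\in S$; thus $F_i=h\,T_i$ for $1\le i\le n$ and $F_{n+j}=-h\,x_j$ for $1\le j\le d$. The only properties of the content operator I will need are that, writing $F\in T[x_1,\dots,x_d]$ as $F=\sum_\alpha c_\alpha x^\alpha$ with $c_\alpha\in T$, one has $c_T(g\,F)=g\cdot c_T(F)$ for every $g\in T$ and $c_T(x_j\,F)=c_T(F)$ for every $j$. Both are immediate: multiplying by $g\in T$ scales each coefficient $c_\alpha$, whereas multiplying by the variable $x_j$ only relabels the monomials and leaves the set of coefficients unchanged.

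With these rules in hand, parts (\ref{NE2}) and (\ref{NE1}) become formal. For (\ref{NE2}), the identity $F_{n+j}=-h\,x_j$ together with the $x_j$-rule gives $c_T(F_{n+j})=c_T(h)$ for every $j$; specializing $j=d$ shows $c_T(h)=c_T(F_{n+d})=C(\varphi)$, whence $c_T(F_{n+j})=C(\varphi)$ for all $j$. For (\ref{NE1}), the identity $F_i=h\,T_i$ with $T_i\in T$ and the $g$-rule give $c_T(F_i)=T_i\cdot c_T(h)=T_i\cdot C(\varphi)$, and combining this with (\ref{NE2}) also yields $c_T(F_i)=T_i\cdot c_T(F_{n+j})$. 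When $h=0$, equivalently $C(\varphi)=0$, every identity collapses to $0=0$.

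For (\ref{NE3}) I would first use (\ref{NE1}) to recast the asserted containment $C(\varphi)\subseteq I_d(B):_T(T_1,\dots,T_n)$ as the family of containments $c_T(F_i)=T_i\,C(\varphi)\subseteq I_d(B)$ for $1\le i\le n$, that is, as $F_i\in I_d(B)\,S$. Up to sign, $F_i$ is the Pfaffian of the matrix obtained from $\goth B$ by deleting its $i$-th row and column, namely the alternating matrix $\bigl[\begin{smallmatrix}\varphi_{\widehat{\imath}}&-(B_{\widehat{\imath}})^{\rm t}\\ B_{\widehat{\imath}}&0\end{smallmatrix}\bigr]$, where $\varphi_{\widehat{\imath}}$ is the $(n-1)\times(n-1)$ alternating submatrix of $\varphi$ on the indices other than $i$ and $B_{\widehat{\imath}}$ is the $d\times(n-1)$ matrix obtained by deleting column $i$ of $B$; crucially the zero block remains of full size $d\times d$. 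I would then invoke the Laplace-type expansion for the Pfaffian of an alternating matrix with a square zero block: it equals a signed sum, over $d$-element column sets $J$ of $B_{\widehat{\imath}}$, of the products $\pm(\text{the maximal minor of }B_{\widehat{\imath}}\text{ on the columns }J)\cdot\Pf\bigl((\varphi_{\widehat{\imath}})_{J^{c}}\bigr)$. Every maximal minor of $B_{\widehat{\imath}}$ is a $d\times d$ minor of $B$ and so lies in $I_d(B)$; hence each term, and therefore $F_i$, lies in $I_d(B)\,S$, which is exactly what is needed.

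The step I expect to be the main obstacle is making this block-Pfaffian expansion precise—keeping track of the signs and, more importantly, noting that the complementary factor $\Pf\bigl((\varphi_{\widehat{\imath}})_{J^{c}}\bigr)$ is a Pfaffian on the index set $J^{c}$ of cardinality $(n-1)-d$, which must be even for a term to survive. This is exactly the parity $n+d$ odd, the only case in which $C(\varphi)\neq 0$ by Remark~\ref{trivialR}.\ref{nt}, so nothing is lost in the degenerate parity. It is worth observing that the same expansion applied to $F_{n+j}$, where one instead deletes a \emph{row} of $B$, produces only $(d-1)\times(d-1)$ minors of $B$ and thus places $C(\varphi)$ in $I_{d-1}(B)$ rather than in $I_d(B)$; this asymmetry is precisely why (\ref{NE3}) asserts membership in the colon ideal and not in $I_d(B)$ itself.
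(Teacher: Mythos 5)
Your proof is correct and follows essentially the same route as the paper: parts (\ref{NE1}) and (\ref{NE2}) are read off Lemma~\ref{RE} via exactly the two content rules you state, and part (\ref{NE3}) follows from $F_i\in I_d(B)\cdot S$ for $1\le i\le n$ combined with (\ref{NE1}). The only difference is that the paper simply asserts $F_i\in I_d(B)\cdot S$ "from the definition of $F_i$," whereas you justify it by the block-Pfaffian expansion; this is the same expansion the paper invokes later (citing \cite[Lemma B.1]{IK}) in the proof of Proposition~\ref{formula}, so your extra detail is consistent with, and indeed fills in, the paper's argument.
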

\begin{proof} Lemma~\ref{RE} shows that 
$$c_T(F_i)=c_T(h \cdot T_i)= T_i \cdot c_T(h) \quad \mbox{   for } 1\le i \le n $$
and 
$$ \ \ \, c_T(F_{n+j})=c_T(h \cdot x_j)=  c_T(h) \quad  \,  \mbox{   for }  1\le j \le d\, .$$
This proves (\ref{NE1}). Part (\ref{NE2}) is an immediate consequence of (\ref{NE1}). 

Finally, the definition of   $F_i$ shows that  in the range $1 \le i \le n $,
$$F_i \in I_d(B)\cdot S \ \ \, \mbox{ and therefore } \ \ \,  c_T(F_i) \subset I_d(B) \,.$$
Now apply (\ref{NE1}) to deduce  (\ref{NE3}).
\end{proof}


 
 \medskip
  
In light of Remark \ref{trivialR}.b we now assume that $d<n$ and $n+d$ is odd. Fix an integer $i$ with $1 \leq i \leq n$. Let $J= \{ j_1, \ldots, j_d \}$ be a subset of $\{1, \ldots, n\} \setminus \{i\}$ with $j_1 < \ldots < j_s <i<j_{s+1} <\ldots< j_d$. Write $|J|=j_1+\ldots + j_d-d+s$. Let ${\rm Pf}_J(\varphi_i)$ be the Pfaffian of the matrix obtained from $\varphi$ by deleting rows and columns $i, j_1, \ldots, j_d$ and denote by $\Delta_J(B)$  the determinant of the $d\times d$ matrix consisting of columns $j_1, \ldots, j_d$ of $B$. 

Let $k[y_1, \ldots, y_d]$ be another polynomial ring  in $d$ variables that acts on the polynomial ring $T[x_1, \ldots,x_d]$ by contraction and let $\circ$ denote the contraction operation. 

For $M$ a monomial in $k[y_1, \ldots, y_d]$ of degree $\frac{n-d-1}2$ we define 
\begin{equation}\label{the f M} f_M=\frac{1}{T_i}\cdot \sum_J (-1)^{|J|} (M\circ {\rm Pf}_J(\varphi_i))\cdot \Delta_J(B)\, ,
\end{equation}
where $J$ ranges over all subsets of $\{1, \ldots, n\} \setminus \{i\}$ of cardinality $d$. Notice that, since ${\rm Pf}_J(\varphi_i) \in k[x_1, \ldots, x_d]$ is a homogeneous polynomial
of degree $\frac{n-d-1}{2}$, the element $M \circ {\rm Pf}_J(\varphi_i)\in k$ is simply the coefficient in this polynomial of the monomial in $k[x_1, \ldots, x_d]$ corresponding to $M$.
 
Similarly, fix an integer $j$ with $1 \leq j \leq d$. Let $J= \{ j_1, \ldots, j_{d-1} \}$ be a subset of $\{1, \ldots, n\}$ and set $||J||=j_1+\ldots + j_{d-1}$. Write ${\rm Pf}_J(\varphi)$ for
the Pfaffian of the matrix obtained from $\varphi$ by deleting rows and columns $j_1, \ldots, j_{d-1}$ and denote by $\Delta_J(B_j)$ the determinant of the matrix consisting of columns $j_1, \ldots, j_{d-1}$ of $B$ with row $j$ removed. Let $M \in k[y_1, \ldots, y_d]$ be a monomial of degree $\frac{n-d+1}2$ that is divisible by $y_j$. We define 
\begin{equation}\label{the h M} h_M= \sum_J (-1)^{||J||} (M\circ {\rm Pf}_J(\varphi))\cdot \Delta_J(B_j)\, ,
\end{equation}
where $J$ ranges over all subsets of $\{1, \ldots, n\}$ of cardinality $d-1$. Notice that $M \circ {\rm Pf}_J(\varphi)\in k$ is the coefficient in ${\rm Pf}_J(\varphi)$
of the monomial corresponding to $M$.

\begin{proposition}\label{formula} Adopt the setting of  Definition~{\rm \ref{C-of-phi}} with $d<n$ and $n+d$ odd. The
ideal $C(\varphi)$ of $T$ is generated by the elements $f_M$ of $\, (${\rm \ref{the f M}}$)$, where $M$ ranges over all monomials in $k[y_1, \ldots, y_d]$ of degree $\frac{n-d-1}{2}$;
it is also generated by the elements $h_M$ of $\, (${\rm \ref{the h M}}$)$, where $M$ ranges over all monomials of degree $\frac{n-d+1}{2}$ that are divisible by $y_j$.
\end{proposition}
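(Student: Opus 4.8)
The plan is to compute the relevant submaximal Pfaffians $F_i$ and $F_{n+j}$ of $\goth B$ explicitly and then read off their content ideals, invoking Proposition~\ref{NE} to identify these content ideals with $C(\varphi)$. By Proposition~\ref{NE}.\ref{NE1} and \ref{NE}.\ref{NE2} we have $c_T(F_{n+j})=C(\varphi)$ for $1\le j\le d$ and $c_T(F_i)=T_i\cdot C(\varphi)$ for $1\le i\le n$, so it suffices to show that the $T$-coefficients of $F_i$, viewed as a polynomial in $x_1,\dots,x_d$, are precisely the $T_i f_M$, and that the $T$-coefficients of $F_{n+j}$ are precisely the $h_M$ (in each case up to sign, which is all that matters for the generated ideal). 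Throughout we are in the case $d<n$ with $n+d$ odd, so that the matrices whose Pfaffians define $F_i$ and $F_{n+j}$ have even size $n+d-1$.

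First I would establish a block (Laplace-type) expansion for the submaximal Pfaffians of $\goth B$ directly from the perfect-matching description of the Pfaffian. In any matching of the index set $\{1,\dots,n+d\}\setminus\{\ell\}$ that contributes a nonzero monomial, each surviving index coming from the zero block of $\goth B$ must be paired with an index in $\{1,\dots,n\}$, since a pair of zero-block indices meets a zero entry of $\goth B$. Summing over the bijections between the chosen subset $J\subseteq\{1,\dots,n\}$ and the surviving zero-block indices reassembles, by the Leibniz formula, the determinant of the corresponding columns of $B$, namely $\Delta_J(B)$ or $\Delta_J(B_j)$; the indices of $\{1,\dots,n\}\setminus J$ are then matched among themselves through $\varphi$, contributing the Pfaffian of $\varphi$ with rows and columns $J$ deleted. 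Collecting the matching signs yields the expansion of each submaximal Pfaffian as a signed sum, indexed by such subsets $J$, of a determinant of $B$ times a Pfaffian of a submatrix of $\varphi$.

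Applying this to $F_i$ with $1\le i\le n$ (remove row and column $i$, so $J$ ranges over $d$-subsets of $\{1,\dots,n\}\setminus\{i\}$) and to $F_{n+j}$ with $1\le j\le d$ (remove row and column $n+j$, so $J$ ranges over $(d-1)$-subsets of $\{1,\dots,n\}$, with row $j$ of $B$ deleted), I would then extract the coefficient of a fixed $x$-monomial. By the definition of the contraction, $M\circ {\rm Pf}_J(\varphi_i)$, respectively $M\circ {\rm Pf}_J(\varphi)$, is exactly this coefficient inside the Pfaffian factor, so the coefficient of the $x$-monomial named by $M$ in $F_i$ is $\pm T_i f_M$ and in $F_{n+j}$ is $\pm h_M$; the divisibility by $T_i$ needed to form $f_M$ is guaranteed by Lemma~\ref{RE}, which gives $F_i=h\,T_i$. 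Since that same lemma gives $F_{n+j}=-x_j h$, the polynomial $F_{n+j}$ is divisible by $x_j$, so its only nonzero $x$-coefficients occur for monomials divisible by $x_j$; this is exactly why the relevant $M$ are those of degree $\frac{n-d+1}2$ divisible by $y_j$, the remaining $h_M$ vanishing. Taking content ideals and quoting $c_T(F_i)=T_i\,C(\varphi)$ and $c_T(F_{n+j})=C(\varphi)$ then yields both asserted generating sets.

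I expect the one genuinely delicate point to be the bookkeeping of signs: matching the global and $J$-dependent signs produced by the perfect-matching expansion of $\goth B$ against the prescribed factors $(-1)^{|J|}$ and $(-1)^{\|J\|}$ in (\ref{the f M}) and (\ref{the h M}), together with the sign $(-1)^{n+d-i}$ built into the definition of $F_i$. Fortunately, only the signs relative to $J$ need to agree, since an overall sign does not change the ideal generated; verifying that the relative signs are absorbed correctly into $(-1)^{|J|}$ and $(-1)^{\|J\|}$ is the crux, and it can be checked by comparing the standard sign of the block Pfaffian expansion with the positions $j_1<\dots<j_d$ recorded in the definitions of $|J|$ and $\|J\|$.
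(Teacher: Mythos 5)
Your proposal is correct and takes essentially the same route as the paper's proof: identify $C(\varphi)$ with the content ideals $\frac{1}{T_i}c_T(F_i)$ and $c_T(F_{n+j})$ via Proposition~\ref{NE}, expand the submaximal Pfaffians of $\goth B$ by maximal minors of $B$, and read off the $x$-coefficients as the elements $f_M$ and $h_M$, using Lemma~\ref{RE} for divisibility by $T_i$ (respectively $x_j$). The only difference is cosmetic: where you derive the block Pfaffian expansion from the perfect-matching description and leave the sign bookkeeping as a check, the paper simply cites \cite[Lemma B.1]{IK} for the expansion with its signs.
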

\begin{proof} We prove the first claim. From Proposition~\ref{NE}.\ref{NE1} we know that $C(\varphi)=\frac{1}{T_i}\cdot c_T(F_i)$, for $1 \leq i \leq n$. Expanding the Pfaffian $F_i$ by maximal minors of $B$, one obtains 
$$
F_i=\pm \sum_J (-1)^{|J|}  \, {\rm Pf}_J(\varphi_i)\cdot \Delta_J(B)\, ,
$$
where, again, $J$ ranges over all subsets of $\{1, \ldots, n\} \setminus \{i\}$ of cardinality $d$; see for instance \cite[Lemma B.1]{IK}. 
Regarded as polynomials in $T[x_1, \ldots,x_d]$, the elements $\Delta_J(B)$ are constants and $F_i$ is homogeneous of degree $\frac{n-d-1}{2}$. Therefore, the content ideal $c_T(F_i)$ is generated by the elements
$$M \circ F_i= \pm \sum_J (-1)^{|J|} (M\circ {\rm Pf}_J(\varphi_i))\cdot \Delta_J(B)\, ,$$
where $M$ ranges over all monomials in $k[y_1, \ldots, y_d]$ of degree $\frac{n-d-1}{2}$. 

To prove the second claim we recall that $C(\varphi)=c_T(F_{n+j})$, for $1 \leq j \leq d$; see Proposition~\ref{NE}.\ref{NE2}. Now expand the Paffian $F_{n+j}$ along 
the last $d-1$ rows and use the fact that the monomials in the support of $F_{n+j} \in T[x_1, \ldots, x_d]$ have degree $\frac{n-d+1}{2}$ and are divisible by $x_j$; see
Lemma \ref{RE} and the discussion following it.
\end{proof}

\medskip

We illustrate the propositions above with an application to the ideal $I_d(B)+C(\varphi)$ in the case $n=d+1$; see \cite[2.10]{J} for a similar result.

\begin{example}
Adopt the setting of  Definition~{\rm \ref{C-of-phi}} with $n=d+1$.
Write $\Delta_i$ for the maximal minor of $B$ obtained by deleting column $i$. The first part of Proposition~\ref{formula} (or Proposition~\ref{NE}.\ref{NE1} and the fact
that $F_i=\pm \, \Delta_i$) show that $C(\varphi)$ is the principal ideal generated by 
$\frac{\Delta_i}{T_i}$, for any $i$ with $1\le i \le n$. Therefore $I_d(B)+C(\varphi)$ is the principal ideal generated by $\frac{\Delta_i}{T_i}$. 
\end{example}
\medskip

The next theorem is  \cite[\unmpart]{KPU-ann}. 

\begin{theorem}\label{unm-part}
Adopt the setting of  Definition~{\rm \ref{C-of-phi}} with $3\le d<n$ and $n$ odd. If $n-d\le\htt I_d(B)$, then the following statements hold.
\begin{enumerate}[\rm(a)]
\item\label{8.3.c} The ring $T/(I_d(B)+C(\varphi))$ is Cohen-Macaulay on the punctured spectrum and $$\operatorname{depth} (T/(I_d(B)+C(\varphi)))=\begin{cases}
1&\text{if $d$ is odd}\\
2&\text{if $d$ is even and $d+3\le n$} \\
n-1&\text{if $d+1=n$\,}.
\end{cases}$$
\item\label{8.3.a} The unmixed part of $I_d(B)$ is equal to
$I_d(B)^{\text{\rm unm}}=I_d(B)+C(\varphi)$.
\item\label{8.3.d} If $d$ is odd, then the  ideal  $I_d(B)$ is unmixed;
furthermore, $I_d(B)$ has a linear free resolution and $\operatorname{reg}(T/I_d(B))=d-1$.
\item\label{8.3.e} The Hilbert series of $\, T/(I_d(B)+C(\varphi))$ is 
\begin{align*}
\phantom{+}&\frac{\sum\limits_{\ell=0}^{d-2}\binom{\ell+n-d-1}{n-d-1}z^\ell + \sum\limits_{\ell=0}^{n-d-2}(-1)^{\ell+d+1}\binom{\ell+d-1}{d-1} z^{\ell+2d-n}}{(1-z)^d}
+(-1)^d\sum\limits_{j\le \lceil\frac{n-d-3}{2}\rceil}
\binom{j+d-1}{d-1}z^{2j+2d-n}  .
\end{align*}
\item\label{8.3.b} The multiplicity of 
$\, T/(I_d(B)+C(\varphi))$ is 
$$\sum\limits_{i=0}^{\lfloor \frac{n-d}{2}\rfloor}\binom{n-2-2i}{d-2}.$$
\end{enumerate}\end{theorem}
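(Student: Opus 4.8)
The plan is to read the multiplicity off the Hilbert series already recorded in Theorem~\ref{unm-part}.\ref{8.3.e}, which I would in turn obtain from the graded free resolution of $T/(I_d(B)+C(\varphi))$ constructed in \cite{KPU-ann}. The whole theorem hinges on that resolution. The hypothesis $n-d\le \htt I_d(B)$ combines with the observation that a $d\times n$ matrix $B$ satisfying $B\cdot \underline{T}^{\rm t}=0$ (Lemma~\ref{RE}) cannot have $\htt I_d(B)$ exceed $n-d$, so in fact $\htt I_d(B)=n-d$ (this equality is exactly what Theorem~\ref{correct grade}.\ref{Y2.8.2.5} supplies in the application to Rees rings), and it is what makes the complexes of \cite{KPU-ann} acyclic. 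Granting acyclicity, Auslander--Buchsbaum gives the depth statement, the containment $C(\varphi)\subset I_d(B):_T(T_1,\dots,T_n)$ of Proposition~\ref{NE}.\ref{NE3} together with Cohen--Macaulayness on the punctured spectrum gives unmixedness, and, when $d$ is odd, the vanishing $C(\varphi)=0$ of Remark~\ref{trivialR} makes the resolution of $T/I_d(B)$ linear. The Hilbert series in \ref{8.3.e} is then the alternating sum of the graded twists of this resolution divided by $(1-z)^{n}$.

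For the final statement, the multiplicity, I would start from that Hilbert series and write it as $H(z)=P(z)/(1-z)^{d}+g(z)$, where
\[
P(z)=\sum_{\ell=0}^{d-2}\binom{\ell+n-d-1}{n-d-1}z^{\ell}+\sum_{\ell=0}^{n-d-2}(-1)^{\ell+d+1}\binom{\ell+d-1}{d-1}z^{\ell+2d-n}
\]
and $g(z)$ is the trailing polynomial summand of \ref{8.3.e}. Since $g$ is an honest polynomial, it alters only finitely many values of the Hilbert function and hence does not affect the Hilbert polynomial or the multiplicity. Because $\dim T/(I_d(B)+C(\varphi))=d$ (equivalently $\htt=n-d$, by \ref{8.3.a}), the multiplicity is the value at $z=1$ of the numerator of the rational summand, namely $e=P(1)$; note $P(1)>0$, consistent with the dimension being exactly $d$.

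It then remains to evaluate $P(1)$ and to recognize the answer. The first sum is a hockey-stick sum,
\[
\sum_{\ell=0}^{d-2}\binom{\ell+n-d-1}{n-d-1}=\binom{n-2}{d-2},
\]
which is precisely the $i=0$ term. The second sum is an alternating binomial sum which, \emph{using that $n$ is odd}, equals $\sum_{i=1}^{\lfloor (n-d)/2\rfloor}\binom{n-2-2i}{d-2}$; I would verify this by a routine induction on $n-d$, peeling one term off with Pascal's identity and letting the alternating signs telescope, the parity hypothesis pinning down the top index $\lfloor (n-d)/2\rfloor$ and the sign of the boundary term. Adding the two evaluations gives $e=\sum_{i=0}^{\lfloor (n-d)/2\rfloor}\binom{n-2-2i}{d-2}$, as claimed. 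Equivalently, since $\binom{n-2-2i}{d-2}$ counts the monomials of degree $n-d-2i$ in $d-1$ variables and, $n$ being odd, these degrees run over exactly the integers in $[0,n-d]$ that are even when $d$ is odd and odd when $d$ is even, this is the monomial count appearing in Theorem~\ref{MULT} and Theorem~\ref{Main-C}.

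The genuinely hard part is the input I have taken for granted: constructing the complexes of \cite{KPU-ann} and proving their acyclicity, which is where the height equality $\htt I_d(B)=n-d$ is essential and where the failure of $I_d(B)+C(\varphi)$ to be perfect must be accommodated. Once the Hilbert series is in hand, the only remaining subtlety is the alternating binomial identity, and there the role of the hypothesis that $n$ is odd is decisive: for $n$ even the same manipulation produces a strictly smaller value, so the parity assumption is not cosmetic but is exactly what aligns $P(1)$ with the stated monomial count.
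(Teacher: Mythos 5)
Your proposal is correct in everything it actually argues, and its overall shape matches the paper's treatment --- but you should know that the paper contains no internal proof of Theorem~\ref{unm-part} at all: the theorem is imported verbatim from the companion paper, the entire ``proof'' being the citation \cite[8.3]{KPU-ann}, and the surrounding text says explicitly that the statements about depth, unmixedness, Hilbert series, and resolutions are merely restated from there. So the hard content you flag --- the construction of the complexes, their acyclicity once $\htt I_d(B)=n-d$, and the resulting Hilbert series --- lives in \cite{KPU-ann}, exactly where you place it, and your deferral is the same deferral the paper makes. What you add beyond the paper is the deduction of the multiplicity, item (\ref{8.3.b}), from the Hilbert series, item (\ref{8.3.e}), and that deduction is sound: since the trailing summand $g(z)$ is a polynomial it cannot affect the Hilbert polynomial, so the multiplicity is $P(1)$; the hockey-stick evaluation $\sum_{\ell=0}^{d-2}\binom{\ell+n-d-1}{n-d-1}=\binom{n-2}{d-2}$ produces the $i=0$ term; and the alternating-sum identity supplying the remaining terms is genuinely parity-sensitive, as you say (for $(d,n)=(3,7)$ one gets $1-3+6=4=\binom{3}{1}+\binom{1}{1}$, whereas $n=6$ gives $1-3=-2\neq 2$). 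Your preliminary observations are also right: $B\cdot\underline{T}^{\rm t}=0$ forces $\htt I_d(B)\le n-d$ (localize at any prime other than $(T_1,\dots,T_n)$, where some $T_i$ is a unit, and cancel a column), and $C(\varphi)=0$ for $d$ odd is Remark~\ref{trivialR}.\ref{nt} since $n+d$ is then even. The only soft spots are your sketches of items (\ref{8.3.c})--(\ref{8.3.d}) (Auslander--Buchsbaum for the depth, the socle containment of Proposition~\ref{NE}.\ref{NE3} plus punctured-spectrum Cohen--Macaulayness for unmixedness, where equidimensionality still needs an argument); but their substance is precisely the acyclicity you granted from \cite{KPU-ann}, so relative to the paper's own treatment this is not a gap.
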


\medskip
\begin{Remark}The conclusions of assertion (\ref{8.3.d}) do not hold when $d$ is even: the ideal $I_d(B)$ can be mixed and the minimal homogeneous resolution of $I_d(B)^{\rm unm}$ may not be linear; see, for example, \cite[(\notlinear)]{KPU-ann}. 
\end{Remark}

\begin{corollary}\label{content} Adopt Data~\ref{data1}. Further assume that $I$ is a linearly presented Gorenstein ideal of height three. Let $\varphi$   be a minimal homogeneous alternating presentation matrix for $g_1, \ldots, g_n$,  $B$ be the Jacobian dual of $\varphi$ in the sense of {\rm\ref{def1}}, and $C(\varphi)$ be the content ideal of  Definition~{\rm\ref{C-of-phi}}. Then 
$$I_d(B)+C(\varphi)\subset I_d(B):_T(T_1,\dots,T_n)\subset I(X).$$
In particular, $I_d(B)+C(\varphi)$ is contained in the defining ideal $\J$ of the Rees ring of $I$. 
\end{corollary}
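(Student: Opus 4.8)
The plan is to establish the two inclusions separately and then read off the ``in particular'' clause. The first inclusion $I_d(B)+C(\varphi)\subset I_d(B):_T(T_1,\dots,T_n)$ is essentially already in hand: the ideal $I_d(B)$ is trivially contained in its own colon, and Proposition~\ref{NE}.\ref{NE3} records precisely that $C(\varphi)\subset I_d(B):_T(T_1,\dots,T_n)$. So the real content is the second inclusion $I_d(B):_T(T_1,\dots,T_n)\subset I(X)$.

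For the second inclusion I would first observe that, since $I$ is linearly presented, the Jacobian dual $B$ has all its entries in $T=k[T_1,\dots,T_n]$ (see \ref{def1}), so that $I_d(B)$ is genuinely an ideal of $T$. From the inclusions (\ref{6.2.1}) we have $I_d(B)\subset \L:_S\m\subset\J$, and recall from (\ref{I of X}) that $I(X)=\J\cap T$, with $\J$ a prime ideal of $S$. Now take any $c\in I_d(B):_T(T_1,\dots,T_n)$; then $c\in T$ and $c\,T_i\in I_d(B)\subset\J$ for every $i$.

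The key point, and the only place where something must actually be checked, is that no variable $T_i$ lies in $\J$: indeed $\J$ is the kernel of the map $\pi$ of \ref{grading} sending $T_i$ to $g_it$, and $g_it\neq0$ in $\mathcal R(I)$ because $g_i\neq0$. Hence $(T_1,\dots,T_n)\not\subset\J$. Since $\J$ is prime and $c\,T_i\in\J$ for all $i$, it follows that $c\in\J$; combined with $c\in T$ this gives $c\in\J\cap T=I(X)$, which is the desired inclusion. I do not expect any genuine obstacle here---the argument is a short application of primeness once the colon condition is unwound---so the ``hard part'' is really just recognizing that the whole statement reduces to the trivial observation $T_i\notin\J$.

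Finally, the ``in particular'' clause is immediate: chaining the two inclusions gives $I_d(B)+C(\varphi)\subset I(X)=\J\cap T\subset\J$, so $I_d(B)+C(\varphi)$ lies in the defining ideal $\J$ of the Rees ring, as claimed.
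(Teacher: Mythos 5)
Your proposal is correct and follows essentially the same route as the paper: the first inclusion via Proposition~\ref{NE}.\ref{NE3}, and the second via the observation that $I_d(B)$ is an ideal of $T$ contained in $\J$ by (\ref{6.2.1}), combined with a primeness argument using the fact that the $T_i$ do not lie in the defining ideal. The only cosmetic difference is that you invoke primeness of $\J$ and then intersect with $T$, whereas the paper applies primeness of $I(X)=\J\cap T$ directly; these are interchangeable since both quotients are subrings of the domain $R[t]$.
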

\begin{proof} The first containment follows from Proposition~\ref{NE}.\ref{NE3}. Since $I$ is linearly presented, $I_d(B)$ is an ideal of $T$ and therefore $I_d(B)\subset I(X)$ according to (\ref{6.2.1}) and (\ref{I of X}).  
Now the second containment follows because $I(X)$ is a prime ideal containing $I_d(B)$ but not $(T_1, \ldots, T_n)$. 
\end{proof}

\smallskip

\section{Defining equations of blowup algebras of linearly presented height three Gorenstein ideals}
\label{grade-3-gor}

\smallskip

In this section we assemble the proof of the main result of the paper, Theorem~\ref{MainT}. We  also provide a more detailed version of the theorem than the one stated in the Introduction; in particular, we express the relevant defining ideals as socles and iterated socles.

\begin{theorem}\label{MainT}
Let   $R=k[x_1,\dots,x_d]$ be  a  polynomial ring over a field $k$,  $I$ be a linearly presented  Gorenstein ideal of height three which is  minimally generated by  homogeneous forms $g_1, \ldots, g_n$, 
 $\varphi$ be a minimal homogeneous alternating presentation matrix for 
$g_1, \ldots, g_n$,  $B$ be the Jacobian dual of $\varphi$ in the sense of {\rm\ref{def1}}, $C(\varphi)$ be the ideal of $T=k[T_1, \ldots, T_n]$ given in Definition~{\rm\ref{C-of-phi}}, and $\mathcal L$ be the ideal of $S=R[T_1,\dots,T_n]$ which defines $\Sym(I)$ as described in {\rm\ref{def1}}.
If $I$ satisfies $G_d$,
then the following statements hold.
\begin{enumerate}[\rm(a)] \item\label{MainT-a} The  ideal of $S$ defining the Rees ring of $\, I$ is 
$$\J=\mathcal L+ I_d(B)S+C(\varphi)S\,.$$
If  $d$ is odd, then  $C(\varphi)$ is zero and $\J$ has the expected form.
 \item\label{MainT-b} The  ideal of $\, T$ defining the variety $X$ parametrized by $g_1, \ldots, g_n$  is 
 $$I(X)=I_d(B)+C(\varphi)\, .$$ 
 \item\label{MainT-d}  
 The two ideals $\J$ and $\L:_S (x_1, \ldots, x_d)(T_1, \ldots,T_n)$ of $\, S$ are equal. 
\item\label{MainT-c} The three ideals    $I(X)$,
$I_d(B):_T(T_1, \ldots,T_n)$, and $I_d(B)^{\text{\rm unm}}$  of $\, T$ are equal. 
\end{enumerate}
\end{theorem}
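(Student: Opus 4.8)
The plan is to reduce the whole statement to part~(\ref{MainT-b}) and then to read off the other three parts almost formally; part~(\ref{MainT-d}) I will treat uniformly at the very end. First I would dispose of the degenerate range $n\le d$. Here $I$ is of linear type by~(\ref{Aug232016}), so $\J=\L$, $\A=0$, and $I(X)=0$; Corollary~\ref{content} then forces $I_d(B)+C(\varphi)\subset I(X)=0$, whence $I_d(B)=C(\varphi)=0$, and parts~(\ref{MainT-a}), (\ref{MainT-b}), (\ref{MainT-c}) all hold trivially (for~(\ref{MainT-c}) one uses that $T$ is a domain, so $I_d(B):_T(T_1,\dots,T_n)=0=I_d(B)^{\text{unm}}$). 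So for~(\ref{MainT-a})--(\ref{MainT-c}) I may assume $d<n$; note also that $d\ge 3$ because $\htt I=3$ and that $n$ is odd by the Buchsbaum--Eisenbud structure theorem, so in fact $3\le d<n$ with $n$ odd.

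The core is part~(\ref{MainT-b}). Corollary~\ref{content} already gives $I_d(B)+C(\varphi)\subset I(X)$, so only the reverse inclusion is at issue, and I would obtain it by a multiplicity comparison. Since $I$ is linearly presented we have $B'=B$, so Theorem~\ref{MULT} gives $\dim A=d$ and Theorem~\ref{correct grade}.\ref{Y2.8.2.5} (its hypotheses hold by Remark~\ref{ass-satisfied}) gives $\htt I_d(B)=n-\dim A=n-d$. In particular $n-d\le\htt I_d(B)$, so Theorem~\ref{unm-part} applies: by~\ref{unm-part}.\ref{8.3.a} the ideal $I_d(B)+C(\varphi)=I_d(B)^{\text{unm}}$ is unmixed of height $n-d$, hence $\dim T/(I_d(B)+C(\varphi))=d$, and by~\ref{unm-part}.\ref{8.3.b} its quotient has multiplicity $\sum_{i=0}^{\lfloor(n-d)/2\rfloor}\binom{n-2-2i}{d-2}$. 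Theorem~\ref{MULT} shows $A=T/I(X)$ has exactly this same multiplicity. I would finish by additivity of multiplicity in dimension $d$ along $0\to I(X)/(I_d(B)+C(\varphi))\to T/(I_d(B)+C(\varphi))\to A\to 0$: the equality of the two multiplicities forces the kernel to have dimension $<d$, but the kernel is a submodule of $T/(I_d(B)+C(\varphi))$, all of whose associated primes have height $n-d$ by unmixedness, so the kernel vanishes and $I(X)=I_d(B)+C(\varphi)$.

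Parts~(\ref{MainT-a}) and~(\ref{MainT-c}) are then immediate. For~(\ref{MainT-a}), Corollary~\ref{fiber type} says $I$ is of fiber type, so $\J=\L+I(X)S=\L+I_d(B)S+C(\varphi)S$ by~(\ref{MainT-b}); and when $d$ is odd, $n+d$ is even, so $C(\varphi)=0$ by Remark~\ref{trivialR}.\ref{nt}, giving the expected form. For~(\ref{MainT-c}), the chain $I_d(B)+C(\varphi)\subset I_d(B):_T(T_1,\dots,T_n)\subset I(X)$ of Corollary~\ref{content} collapses to equalities because its two ends agree by~(\ref{MainT-b}), and the identification with $I_d(B)^{\text{unm}}$ is Theorem~\ref{unm-part}.\ref{8.3.a}.

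Finally, for part~(\ref{MainT-d}), write $\n=(T_1,\dots,T_n)$ and use $\L:_S(\m\n)=(\L:_S\m):_S\n$. The inclusion into $\J$ follows from~(\ref{6.2.1}): since $\L:_S\m\subset\J$ and $\J$ is prime with $\n\not\subset\J$, we get $(\L:_S\m):_S\n\subset\J:_S\n=\J$. For the reverse it suffices to show $\m\n\,\J\subset\L$. Writing $\J=\L+I(X)S$ (fiber type) and using $\n\,I(X)\subset I_d(B)$, which is part~(\ref{MainT-c}), together with $\m\,I_d(B)S\subset\L$, which is~(\ref{6.2.1}), one gets $\m\n\,I(X)S\subset\m\,I_d(B)S\subset\L$ and hence $\m\n\,\J\subset\L$; this chain is valid in the degenerate case as well, so~(\ref{MainT-d}) holds uniformly. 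The main obstacle is the multiplicity argument in part~(\ref{MainT-b}): its hard inputs---the degree of $X$, the height and unmixedness of $I_d(B)$, and the multiplicity of $T/(I_d(B)+C(\varphi))$---are all supplied by the cited theorems, so the remaining work is to verify their hypotheses and run the additivity step.
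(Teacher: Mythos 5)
Your proposal is correct and follows essentially the same route as the paper: the inclusion from Corollary~\ref{content}, fiber type via Corollary~\ref{fiber type}, the height computation via Remark~\ref{ass-satisfied} and Theorem~\ref{correct grade}, Theorem~\ref{unm-part} for unmixedness and multiplicity, and the comparison with Theorem~\ref{MULT} to force equality in part~(\ref{MainT-b}), with parts~(\ref{MainT-a}), (\ref{MainT-c}), (\ref{MainT-d}) deduced formally exactly as in the paper (your colon-ideal manipulation for~(\ref{MainT-d}) is the paper's argument phrased multiplicatively). The only difference is expository: you spell out the additivity-of-multiplicity step and the degenerate case $n\le d$ in more detail than the paper does, which is a welcome tightening rather than a deviation.
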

\begin{proof}  From   Corollary~\ref{content} we have the containment  
\begin{equation}\label{cont} I_d(B)+C(\varphi)\subset I(X)\,.\end{equation}
If $n\le d$, then 
$\J=\mathcal L$ by (\ref{Aug232016}) and hence
$I(X)=0$ by (\ref{I of X}), which also gives $I_d(B) + C(\varphi)=0$. Therefore we may assume that $d<n$.  

Corollary~\ref{fiber type} guarantees that
the ideal $I$ is of fiber type, which means that $\J=\mathcal L+I(X)S$; see \ref{LT-FT}.  Thus (\ref{MainT-a}) 
follows from part (\ref{MainT-b}) (and Remark \ref{trivialR}.b).

According to Theorem \ref{MULT}  the homogeneous coordinate ring $A=T/I(X)$ of $X$ has dimension $d$. Now Remark~\ref{ass-satisfied} and Theorem \ref{correct grade}.\ref{Y2.8.2} show that 
\begin{equation}\label{MTf}\htt I_d(B)=\htt I(X)=n-d\,.\end{equation}
The hypothesis of Theorem~\ref{unm-part} is satisfied; hence assertions (\ref{8.3.a}) and (\ref{8.3.b}) of Theorem~\ref{unm-part} yield 
\begin{equation}\label{MTe}I_d(B)^{\text{\rm unm}}=I_d(B)+C(\varphi),\end{equation}
and the multiplicity of 
$T/(I_d(B)+C(\varphi))$ is 
$$\sum\limits_{i=0}^{\lfloor \frac{n-d}{2}\rfloor}\binom{n-2-2i}{d-2}.$$
By Theorem \ref{MULT}, the coordinate ring $A=T/I(X)$ has the same multiplicity;  by (\ref{cont}) and (\ref{MTf}), the rings  $T/(I_d(B)+C(\varphi))$ and $T/I(X)$ have the same dimension; and by (\ref{MTe}), both rings are unmixed. Hence the containment $I_d(B)+C(\varphi)\subset I(X)$ of (\ref{cont}) is an equality,  and (\ref{MainT-b}) is also established. 

Part (\ref{MainT-c}) follows from Corollary~\ref{content}, part (\ref{MainT-b}), and (\ref{MTe}).  For the proof of (\ref{MainT-d}) notice that parts (\ref{MainT-a}), (\ref{MainT-b}),  and (\ref{MainT-c}) give 
$$\J=\L+(I_d(B):_T(T_1, \ldots,T_n))S\, .$$
Therefore, 
$$\J \subset (\L+I_d(B)S):_S(T_1, \ldots,T_n)\,.$$
Since $\L+I_d(B)S \subset \L:_S(x_1, \ldots, x_d)$  according to (\ref{6.2.1}), we obtain
$$\J\subset (\L:_S(x_1, \ldots, x_d)):_S(T_1, \ldots,T_n)=\L:_S (x_1, \ldots, x_d)(T_1, \ldots,T_n)\subset \J\, .$$
\end{proof}

\begin{remark}\label{general setting} 
Adopt Data~\ref{data1}. The assumption in Theorem~\ref{MainT} that $I$ is linearly presented can be weakened to the condition that 
the entries of a minimal homogeneous presentation matrix $\varphi$ of $I$ generate a complete intersection ideal. 

Indeed, let $\varphi$ be a minimal homogeneous alternating presentation matrix of $g_1, \ldots, g_n$. By the symmetry of the minimal homogeneous $R$-resolution 
of $R/I$ all entries of $\varphi$ have the same degree $D$. 
Let $y_1, \ldots, y_s$ be a regular sequence of homogeneous forms of degree $D$ that generate $I_1(\varphi)$. These forms are algebraically independent over $k$, and $R=k[x_1, \ldots,x_d]$ is flat over the polynomial ring $k[y_1, \ldots,y_s]$. The entries of $\varphi$ are forms of degree $D$ and are $R$-linear combinations of $y_1, \ldots,y_s$; hence these entries are linear forms in the ring $k[y_1, \ldots,y_s]$. Since $g_1, \ldots, g_n$ are signed submaximal Paffians of $\varphi$ according to \cite{BE}, these elements also
belong to  the ring $k[y_1, \ldots,y_s]$, and by flat descent they satisfy the assumptions of Theorem~\ref{MainT} as elements of this ring.
Now apply Theorem~\ref{MainT}.
Flat base change then gives the statements about the Rees ring over $R$, with $y_1, \ldots, y_s$ in place of $x_1, \ldots, x_d$; the statements about the ideal $I(X)$ are independent of the ambient ring. 
\end{remark}

\smallskip

Paper \cite{KPU-ann} was  written with the intention of understanding the ideals  $I_d(B)^{\rm unm}$ in order to determine the equations defining the Rees algebra and the special fiber ring of height three Gorenstein ideals, the ultimate goal of the present paper. In particular, in  \cite{KPU-ann} the ideals $I_d(B)^{\rm unm}$ have been resolved.  
Now that we have proven that  $I_d(B)^{\rm unm}$ defines the special fiber ring, 
we are able to harvest much information from the results of \cite{KPU-ann}.

\begin{corollary}\label{Cor-to-main} Adopt the notation  and hypotheses  of Theorem~{\rm \ref{MainT}}. The following statements hold.
\begin{enumerate}[{\rm (a)}]
\item The variety $X$ is Cohen-Macaulay. 
\item If $n\le d+1$, then the  homogeneous coordinate ring $A$ of $\, X$ is Cohen-Macaulay. Otherwise
$$\operatorname{depth}\, A=\begin{cases}
1&\text{if $d$ is odd }\\
2&\text{if $d$ is even\,.} 
\end{cases}$$
\item If $d$ is odd and $d<n$, then $I(X)$ has a linear free resolution and $\operatorname{reg}(A)=d-1$.
\item If $d<n$, then the Hilbert series of $A$ is 
\begin{align*}
\phantom{+}&\frac{\sum\limits_{\ell=0}^{d-2}\binom{\ell+n-d-1}{n-d-1}z^\ell + \sum\limits_{\ell=0}^{n-d-2}(-1)^{\ell+d+1}\binom{\ell+d-1}{d-1} z^{\ell+2d-n}}{(1-z)^d}
+(-1)^d\sum\limits_{j\le \lceil\frac{n-d-3}{2}\rceil}
\binom{j+d-1}{d-1}z^{2j+2d-n}  .
\end{align*}
\end{enumerate}
\end{corollary}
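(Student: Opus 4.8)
The plan is to read Corollary~\ref{Cor-to-main} as a harvesting statement: once Theorem~\ref{MainT} identifies the homogeneous coordinate ring $A=T/I(X)$ with $T/(I_d(B)+C(\varphi))=T/I_d(B)^{\text{unm}}$, each of the four assertions should drop out by quoting the matching conclusion of Theorem~\ref{unm-part}. So the first step is to install this identification and to verify that the running hypotheses of Theorem~\ref{unm-part} hold. Since $I$ has height three, $d=\dim R\ge 3$, and since $I$ is Gorenstein of height three, $n$ is odd. If $n\le d$, then $I$ is of linear type by (\ref{Aug232016}), so $I(X)=0$ and $A=T$ is a polynomial ring; then $X=\Proj A=\mathbb P^{n-1}_k$ and all assertions are immediate (with (c) and (d) vacuous, as they assume $d<n$). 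Hence I may assume $3\le d<n$ with $n$ odd, which is exactly the range of Theorem~\ref{unm-part}. Invoking Theorem~\ref{MainT}.\ref{MainT-b} gives $A=T/(I_d(B)+C(\varphi))$, and the equality (\ref{MTf}) recorded in its proof states $\htt I_d(B)=n-d$; thus the hypothesis $n-d\le\htt I_d(B)$ of Theorem~\ref{unm-part} holds, with equality.

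With this setup, I would derive each part from the corresponding item of Theorem~\ref{unm-part}. For (a), Theorem~\ref{unm-part}.\ref{8.3.c} asserts that $T/(I_d(B)+C(\varphi))$ is Cohen-Macaulay on the punctured spectrum, which is precisely the statement that the projective variety $X=\Proj A$ is Cohen-Macaulay. For (b), I would again quote the depth formula of Theorem~\ref{unm-part}.\ref{8.3.c}: its case $n=d+1$ yields $\depth A=n-1=d=\dim A$, so $A$ is Cohen-Macaulay, and together with the case $A=T$ for $n\le d$ this gives the Cohen-Macaulay conclusion whenever $n\le d+1$; in the range $d+1<n$ the remaining subcases of Theorem~\ref{unm-part}.\ref{8.3.c} produce $\depth A=1$ when $d$ is odd and $\depth A=2$ when $d$ is even. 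For (c), when $d$ is odd the sum $n+d$ is even, so Remark~\ref{trivialR}.\ref{nt} gives $C(\varphi)=0$ and hence $I(X)=I_d(B)$; then Theorem~\ref{unm-part}.\ref{8.3.d} supplies the linear free resolution together with $\reg(A)=\reg(T/I_d(B))=d-1$. Finally, (d) is verbatim the Hilbert series of Theorem~\ref{unm-part}.\ref{8.3.e}.

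The only place I expect the argument to feel less than automatic is the case bookkeeping in (b), and this is where the oddness of $n$ must be used carefully. One needs that $n$ odd rules out the apparently missing subcase ``$d$ even, $n=d+2$'' of Theorem~\ref{unm-part}.\ref{8.3.c} (so that $d$ even and $d+1<n$ forces $n\ge d+3$, and $d$ odd and $d<n$ forces $n\ge d+2$), and that $n=d+1$ forces $\depth A=\dim A$. No genuinely new computation is required; all the substance already resides in Theorem~\ref{MainT} and in the resolutions of \cite{KPU-ann} recorded here as Theorem~\ref{unm-part}, so the corollary is essentially a matter of assembling and case-splitting.
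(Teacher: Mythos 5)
Your proposal is correct and follows essentially the same route as the paper: reduce to the case $3\le d<n$ (where $I(X)=0$ otherwise), identify $A=T/(I_d(B)+C(\varphi))$ via Theorem~\ref{MainT}.\ref{MainT-b}, note that (\ref{MTf}) supplies the height hypothesis of Theorem~\ref{unm-part}, and then quote items (\ref{8.3.c}), (\ref{8.3.d}), (\ref{8.3.e}). Your extra parity bookkeeping (using that $n$ is odd to rule out $n=d+2$ for $d$ even and to force $C(\varphi)=0$ for $d$ odd) is exactly the implicit content of the paper's terse argument, spelled out.
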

\begin{proof} We may assume that $d<n$ since otherwise $I(X)=0 \, ;$ see (\ref{Aug232016}) and (\ref{I of X}). From 
Theorem~\ref{MainT}.b we know that $I(X)=I_d(B)+C(\varphi)$, where $C(\varphi)=0$ if $d$ is odd.
Thus the assertions are items (\ref{8.3.c}), (\ref{8.3.d}), and (\ref{8.3.e}) of Theorem~\ref{unm-part}, which
applies due to (\ref{MTf}).
\end{proof}

\begin{Remark} The  minimal homogeneous resolution of the 
ideal $I(X)$ may not be linear when $d$ is even; see \cite[(\notlinear)]{KPU-ann}.\end{Remark}


\begin{thebibliography}{99}


\bibitem{AHH}{I. Aberbach, S. Huckaba,  and C. Huneke,   {\em Reduction numbers, {R}ees algebras and {P}faffian ideals}, J. Pure Appl. Algebra {\bf 102} (1995), 1--15.}

\bibitem{AN}{M. Artin and M. Nagata, {\em Residual intersection in Cohen-Macaulay rings}, J. Math. Kyoto Univ. {\bf 12} (1972), 307--323. }

\bibitem{BS}{G. Boffi and R. S\'{a}nchez, {\em On the resolutions of the powers of the Pfaffian ideal}, J. Algebra {\bf 152} (1992), 463--491.}

\bibitem{BMMNZ}{M. Boij, J. Migliore, R. M.  Mir{\'o}-Roig, U. Nagel, and F. Zanello,
 {\em On the weak {L}efschetz property for {A}rtinian {G}orenstein
              algebras of codimension three}, J. Algebra  {\bf 403} (2014), 48--68.}

\bibitem{BM}{J. A. Boswell and V. Mukundan, {\em Rees algebras and almost linearly presented ideals}, J. Algebra {\bf 460} (2016), 102--127. }

\bibitem{BCV}{W. Bruns, A. Conca,  and M. Varbaro, {\em Relations between the minors of a generic matrix}, Adv. Math. {\bf 244} (2013), 171--206.}
    	
\bibitem{BCV2}{W. Bruns,  A. Conca, and M. Varbaro,  {\em Maximal minors and linear powers}, J. Reine Angew. Math.  {\bf 702} (2015), 41--53.}

\bibitem{BE}{D. Buchsbaum and D. Eisenbud, {\em Algebra structures for finite free resolutions, and some structure theorems for ideals of codimension} $3$, Amer. J. Math. {\bf 99} (1977), 447--485.} 

\bibitem{B}{L. Burch, {\em On ideals of finite homological dimension in local rings}, Proc. Camb. Philos. Soc. {\bf 64} (1968), 941--948.}
\bibitem{Bu}{ L. Bus\'{e}, {\em On the equations of the moving curve ideal of a rational algebraic plane curve}, J. Algebra {\bf 321} (2009),  2317--2344. }

\bibitem{CEU}{M.~Chardin, D. Eisenbud, and B. Ulrich, {\em  Hilbert functions,
residual intersections, and residually $S_2$-ideals}, Compositio Math. {\bf 125} (2001), 193--219.}

\bibitem{CV}{A. Conca and G.~Valla, {\em Betti numbers and lifting of {G}orenstein codimension three
              ideals}, Comm. Algebra {\bf 28} (2000), 1371--1386.}
              
\bibitem{CD}{ T. Cortadellas Ben\'{i}tez and C.  D'Andrea, {\em  Rational plane curves parameterizable by conics}, J. Algebra {\bf 373} (2013), 453--480. }

\bibitem{CD2}{T. Cortadellas Ben\'{i}tez and C. D'Andrea, {\em Minimal generators of the defining ideal of the Rees algebra associated with a rational plane parametrization with $\mu=2$}, Canad. J. Math.  {\bf 66} (2014), 1225--1249.}

\bibitem{CHW}{D. Cox, J. W. Hoffman, and H. Wang, {\em  Syzygies and the Rees algebra},
J. Pure Appl. Algebra {\bf 212} (2008), 1787--1796.}

\bibitem{DV}{E. De Negri  and G. Valla, {\em The {$h$}-vector of a {G}orenstein codimension three domain}, Nagoya Math. J.
 {\bf 138} (1995), 113--140.}


\bibitem{D}{S. Diesel, {\em Irreducibility and dimension theorems for families of height
              {$3$} {G}orenstein algebras}, Pacific J. Math. {\bf 172}, (1996), 365--397.}
              
\bibitem{EHU} {D. Eisenbud, C. Huneke, and B. Ulrich, {\em The regularity of Tor and graded Betti numbers}, 
Amer. J. Math. {\bf 128} (2006), 573--605.}



\bibitem{EU} D.~Eisenbud and B.~Ulrich, {\em Row ideals and fibers of morphisms}, Mich. Math. J. \textbf{57} (2008), 261--268. 

\bibitem{EI}{J. Elias and A. Iarrobino, {\em The {H}ilbert function of a {C}ohen-{M}acaulay local algebra:
              extremal {G}orenstein algebras}, J. Algebra {\bf 110} (1987), 344--356.}
              
              
\bibitem{G}{F. Gaeta, {\em Ricerche intorno alle variet\`a matriciali ed ai loro ideali}, Atti del {Q}uarto {C}ongresso dell'{U}nione {M}atematica
              {I}taliana, {T}aormina   {\bf II} (1951), 326--328.}       

\bibitem{GM}{A. Geramita  and J. Migliore,
 {\em Reduced {G}orenstein codimension three subschemes of
              projective space}, Proc. Amer. Math. Soc. {\bf 125} (1997), 943--950.}
\bibitem{GLP}L.~Gruson, R.~Lazarsfeld, and C.~Peskine, {\em On a theorem of Castelnuovo, and the equations defining space curves}, Invent. Math. {\bf 72} (1983),  491--506.

\bibitem{H} {R. Hartshorne, {\em Geometry of arithmetically {G}orenstein curves in {$\mathbb P^4$}}, Collect. Math.
 {\bf 55} (2004), 97--111.}	

\bibitem{HSV} J.~Herzog, A.~Simis, and W.V.~Vasconcelos, {\em Koszul homology and blowing-up rings}, Commutative algebra (Trento, 1981), pp. 79--169, Lecture Notes in Pure and Appl. Math., {\bf 84}, Dekker, New York, 1983. 


 

\bibitem{HVV}J.~Herzog, W.V.~Vasconcelos, and R.~Villarreal, {\em
Ideals with sliding depth},
Nagoya Math. J. {\bf 99} (1985), 159--172.

\bibitem{HSV08} {J. Hong, A. Simis, and W.V. Vasconcelos,  {\em On the homology of two-dimensional elimination},
J.  Symbolic Comput. {\bf 43} (2008), 275--292.} 





\bibitem{Hu82}C.~Huneke, {\em Linkage and the Koszul homology of ideals}, Amer. J. Math. {\bf 104} (1982),  1043--1062. 


\bibitem{HR}{C. Huneke and M. Rossi, {\em The dimension and components of symmetric algebras}, J. Algebra {\bf 98} (1986), 200--210.}



\bibitem{J}{M. Johnson, {\em Second analytic deviation one ideals and their Rees algebras}, J. Pure Appl. Algebra {\bf 119} (1997), 171--183.}


\bibitem{Jo96}{J.-P. Jouanolou, {\it  R\'esultant anisotrope, compl\'ements et applications}, The Foata Fest\-schrift,   Electron. J. Combin. {\bf 3} (1996).}


\bibitem{Jo97}{J.-P. Jouanolou, {\it Formes d'inertie et r\'{e}sultant: un formulaire},
Adv. Math. {\bf 126}
(1997), 119--250.}

\bibitem{KN}{K. Kimura and  N. Terai,  {\em Arithmetical rank of {G}orenstein squarefree monomial ideals
              of height three}, J. Algebra {\bf 422} (2015), 11--32.}
              
\bibitem{KM}{J. Kleppe  and R. M. Mir{\'o}-Roig, {\em The dimension of the {H}ilbert scheme of {G}orenstein
codimension {$3$} subschemes}, J. Pure Appl. Algebra {\bf 127} (1998), 73--82.}

\bibitem{KPU2011}{A. Kustin, C. Polini, and B. Ulrich, {\em Rational normal scrolls and the defining equations of Rees algebras},  J. Reine Angew. Math. {\bf 650} (2011), 23--65.}

\bibitem{KPU-BSSA} A. Kustin, C. Polini, and B. Ulrich, {\em The bi-graded structure of symmetric algebras with applications to Rees rings}, J. Algebra, {\bf 469} (2017) 188--250.
 

\bibitem{KPUB}{A. Kustin, C. Polini, and B. Ulrich, {\em Blowups and fibers of morphisms},  Nagoya Math. J., (2016), 1--34 \newline
{\tt DOI: http://dx.doi.org/10.1017/nmj.2016.34}.}

\bibitem{KPU-HS}{A. Kustin, C. Polini, and B. Ulrich, {\em The Hilbert series of the ring associated to an almost alternating matrix}, Comm. Algebra {\bf 44} (2016), 3053--3068.}

\bibitem{KPU-DBLC}A. Kustin, C. Polini, and B. Ulrich, {\em Degree bounds for local cohomology}, preprint 2015, on the arXiv.

\bibitem{KPU-ann}{A. Kustin, C. Polini, and B. Ulrich, {\em A matrix of linear forms which is annihilated by a vector of indeterminates},  J. Algebra, {\bf 469} (2017), 120--187.}



\bibitem{KU-Fam}{A. Kustin and B. Ulrich, {\em A family of complexes associated to an almost alternating map, with applications to residual intersections}, Mem. Amer. Math. Soc. {\bf 95} (1992), no. 461.}

\bibitem{I}{A. Iarrobino,  {\em Associated graded algebra of a {G}orenstein {A}rtin algebra}, Mem. Amer. Math. Soc.  {\bf 107}
(1994), no. 514.}

\bibitem{IK}{A. Iarrobino and V. Kanev, {\em Power sums, {G}orenstein algebras, and determinantal loci}, Lecture Notes in Mathematics  {\bf 1721}, Springer-Verlag, Berlin, 1999.}
\bibitem{Ma}{J. Madsen, {\em Equations of Rees algebras of ideals in two variables}, preprint 2015.}



\bibitem{MP}{J. Migliore and C. Peterson,  {\em A construction of codimension three arithmetically
              {G}orenstein subschemes of projective space}, Trans. Amer. Math. Soc.
 {\bf 349} (1997), 3803--3821.}

\bibitem{M}{S. Morey, {\em Equations of blowups of ideals of codimension two and three},  J. Pure Appl. Algebra {\bf 109} (1996), 197--211.} 


\bibitem{MU}{S. Morey and B. Ulrich, {\em Rees algebras of ideals with low codimension}, Proc. Amer. Math. Soc. {\bf 124} (1996), 3653--3661.}

\bibitem{L}{L.~Nguyen, 
{\em On Rees algebras of linearly presented ideals}, J. Algebra {\bf 420} (2014), 186--200. }


\bibitem{PU}{C. Polini and B. Ulrich, {\em Necessary and sufficient conditions for the Cohen-Macaulayness of blowup algebras},  Compositio Math. {\bf 119} (1999), 185--207.}

\bibitem{UAN}{B. Ulrich, {\em Artin-Nagata properties and reductions of ideals}, Contemp. Math. {\bf 159} (1994), 373--400.} 

\bibitem{V}{W.V. Vasconcelos, {\em On the equations of Rees algebras}, J. Reine Angew. Math. {\bf 418} (1991), 189--218.} 

\bibitem{W}{J. Watanabe, {\em A note on {G}orenstein rings of embedding codimension three}, Nagoya Math. J. {\bf 50} (1973), 227--232.}


\end{thebibliography}
\end{document}